\numberwithin{equation}{section}
\newcommand{\bR}{{\mathbb R}}
\newcommand{\bN}{{\mathbb N}}
\newcommand{\bfP}{{\mathbf{P}}}
\newcommand{\bfQ}{{\mathbf{Q}}}
\newcommand{\cA}{{\mathcal{A}}}
\newcommand{\cD}{{\mathcal{D}}}
\newcommand{\cL}{{\mathcal{L}}}
\newcommand{\cH}{{\mathcal{H}}}
\newcommand{\sA}{{\mathscr{A}}}
\newcommand{\sE}{{\mathscr{E}}}
\newcommand{\sF}{{\mathscr{F}}}
\newcommand{\sG}{{\mathscr{G}}}
\newcommand{\sI}{{\mathscr{I}}}
\newcommand{\fm}{{\mathfrak{m}}}
\newcommand{\ft}{{\mathfrak{t}}}
\newtheorem{theorem}{Theorem}[section]
\newtheorem{lemma}[theorem]{Lemma}
\newtheorem{proposition}[theorem]{Proposition}
\newtheorem{corollary}[theorem]{Corollary}
\theoremstyle{definition}
\newtheorem{definition}[theorem]{Definition}
\theoremstyle{remark}
\newtheorem{remark}[theorem]{Remark}
\numberwithin{equation}{section}
\begin{document}

\title[Dirichlet forms and polymer models]{Dirichlet forms and polymer models based on stable processes}

\author{Liping Li}
\address{RCSDS, HCMS, Academy of Mathematics and Systems Science, Chinese Academy of Sciences, Beijing 100190, China.}
\email{liliping@amss.ac.cn}

\author{Xiaodan Li}
\address{Fudan University, Shanghai 200433, China.}
\email{}

\thanks{The first named author is partially supported by NSFC (No. 11688101 and 11801546) and Key Laboratory of Random Complex Structures and Data Science, Academy of Mathematics and Systems Science, Chinese Academy of  Sciences (No. 2008DP173182).}

\subjclass[2010]{Primary 31C25, 60J60.}

\keywords{Dirichlet forms, Polymer models, Self-adjoint extensions, Stable processes}

\begin{abstract}
In this paper, we are concerned with polymer models based on $\alpha$-stable processes, where $\alpha\in (\frac{d}{2},d\wedge 2)$ and $d$ stands for dimension. They are attached with a delta potential at the origin and the associated Gibbs measures are parametrized by a constant $\gamma$ playing the role of inverse temperature. Phase transition exhibits with critical value $\gamma_{cr}=0$. Our first object is to formulate the associated Dirichlet form of the canonical Markov process $X^{(\gamma)}$ induced by the Gibbs measure for a globular state $\gamma>0$ or the critical state $\gamma=0$. Approach of Dirichlet forms also leads to deeper descriptions of probabilistic counterparts of globular and critical states. Furthermore, we will characterize the behaviour of polymer near the critical point from probabilistic viewpoint by showing that $X^{(\gamma)}$ is convergent to $X^{(0)}$ as $\gamma\downarrow 0$ in a certain meaning. 
\end{abstract}

%%%%%%%%%%%%%%%%%%%%
\maketitle
%%%%%%%%%
%\pagestyle{myheadings} \markboth{\hfill Xiaodan Li\qquad Liping Li
%\hfill}{\hfill Xiaodan Li\qquad Liping Li  \hfill}
%%%%%%%%%%%%%%%%%%%%%%%%%%%%%%%%%%

\tableofcontents

\section{Introduction}\label{SEC1}

Polymers are chemical compounds consisting essentially of repeating units, called monomers. Real polymers are complex objects on their own, typically fluctuating in a solvent as well as with other portion of themselves.
The study of polymer models has been a very active area of research in mathematical physics for a long time. It began to develop in 1930's under the influence of chemical and biological applications. Research into these models led to a great number of impressive advances, from the explanation of rubber elasticity to the creation of the theory of helix-coil transitions in proteins and nucleic acids. Many  physically relevant problems on polymer chains have been outlined in e.g. a review of Lifschitz, Grosberg and Khokhlov \cite{bib16}. 

In simplified discrete models, the configuration of a polymer, i.e. the sequence of locations of the monomers, follows the trajectory of a random walk on a lattice; %and the self-interaction is modelled by the self-avoiding constraint; 
see e.g. \cite{bib15}. As a continuous generalization, a continuum polymer model was constructed by Cranston et al. (see e.g. \cite{bib8, bib9, bib10}) in the context of Brownian motions. Let us use a few lines to explain some details. It starts with a system of finite size $T$, which means the length of the polymer. Let $\Omega_T:=C([0,T], \mathbb{R}^d)$, i.e. the family of all continuous paths of size $T$ in $\bR^d$, be the configuration space of the system. Then the polymer model is described by a Gibbs ensemble at each inverse temperature $\beta$ ($\geq 0$), realized as a probability measure $\mathbf{P}_{\beta, T}$ on $\Omega_T$, which is also called a Gibbs measure. More precisely, the underlying probability measure $\bfP_{0,T}$ is identified with the Wiener measure on $\Omega_T$ in this model, and we also denote it by $\mathbf{P}_T$ in abbreviation. For $\beta>0$, $\mathbf{P}_{\beta,T}$ is determined by the so-called Hamiltonian $H_T$, which is given by a certain potential function $v$ on $\mathbb{R}^d$ in the following manner:
\begin{equation}\label{EQ1HOT}
	H_T(\omega)=-\int_0^T v(\omega(t))dt,\quad \omega\in \Omega_T. 
\end{equation}
In other words, 
\begin{equation}\label{EQ1PTO}
	\mathbf{P}_{\beta, T}(d\omega)=\frac{\exp\{-\beta H_T(\omega)\}}{Z_{\beta,T}}\mathbf{P}_T(d\omega) =\frac{\exp\{\beta \int_0^Tv(\omega(t))dt\}}{Z_{\beta,T}}\mathbf{P}_T(d\omega),
\end{equation}
where $Z_{\beta,T}:=\mathbf{E}_T\exp\{-\beta H_T\}$ is the so-called partition function. From probabilistic viewpoint, the phenomenon of phase transition is observed by letting $T\uparrow \infty$ with certain tactic. It is shown in \cite{bib8} that there is a critical value $\beta_{cr}$ such that for $\beta<\beta_{cr}$ and $\beta>\beta_{cr}$, the polymer manifests different behaviours and is called in the \emph{diffusive state} and in the \emph{globular state} respectively. In the former state, the canonical process induced by the limiting measure is nothing but Brownian motion. However in the latter state, the limiting measure induces another diffusion process enjoying a certain ergodic measure $\psi_\beta^2(x)dx$. From analytic viewpoint, the (self-adjoint) operator 
\[
	\mathcal{H}_\beta=\frac{1}{2}\Delta+\beta\cdot v: L^2(\mathbb{R}^d)\rightarrow L^2(\mathbb{R}^d),
\] 
where $\Delta$ is the Laplacian operator and $v$ is the potential function in \eqref{EQ1HOT}, plays an important role in characterizing the phase transition. In the case that $v\in C_c^\infty(\mathbb{R}^d)$ is non-negative and not identically equal to $0$, it is well known that the spectrum of $\mathcal{H}_\beta$ consists of the absolutely continuous part $(-\infty, 0]$ and at most a finite number of non-negative eigenvalues $\lambda_j(\beta)$, i.e. $\sigma(\mathcal{H}_\beta)=(-\infty, 0]\cup \{\lambda_j(\beta): 0\leq j\leq N\}$. We enumerate the eigenvalues in a decreasing order and particularly, $\lambda_0(\beta)=\max\{\lambda_j(\beta): 0\leq j\leq N\}$ if $\{\lambda_j(\beta)\}\neq \emptyset$. When $\beta\leq \beta_{cr}$, $\sup\sigma(\mathcal{H}_\beta)=0$. When $\beta>\beta_{cr}$, it holds that $\lambda_0(\beta)>0$ and $\beta\mapsto \lambda_0(\beta)$ is increasing and continuous with $\lim_{\beta \downarrow \beta_{cr}}\lambda_0(\beta)=0$ and $\lim_{\beta\uparrow \infty} \lambda_0(\beta)=\infty$ (see \cite[Lemma~4.1]{bib8}). These facts about $\sigma(\mathcal{H}_\beta)$ are another reflection of phase transition. It is worth noting that in a globular state, the density function $\psi_\beta$ appearing in the above ergodic measure is exactly the ground state of $\mathcal{H}_\beta$, i.e. its eigenfunction with eigenvalue $\lambda_0(\beta)$. In addition, $\lambda_0(\beta)$ coincides with the rate of growth of $Z_{\beta, T}$ (also called the free energy of the ensemble), i.e. $\lambda_0(\beta)=\lim_{T\uparrow \infty}\left(\log Z_{\beta,T}\right)/T$,    
and the asymptotics of $\lambda_0(\beta)$ as $\beta\downarrow \beta_{cr}$ demonstrate universality in that they depend only on dimension (see \cite[Theorem~6.1]{bib8}).

More interestingly, another particular and significant case with $v=\delta_0$, i.e. the delta function at the origin, is explored in e.g. \cite{bib6, bib9} and similar phase transition appears only for $d=3$. Note that $\mathcal{H}_\beta$ are not self-adjoint any more and should be replaced by self-adjoint extensions, parametrized by a constant $\gamma\in\{-\infty\}\cup \mathbb{R}$ as shown in \cite[Theorem~2.1]{bib9}, of $\frac{1}{2}\Delta$ restricted to $C_c^\infty(\mathbb{R}^3\setminus \{0\})$. Denote the family of all these self-adjoint extensions by $\{\mathcal{L}_\gamma: \gamma\in \mathbb{R} \text{ or }-\infty\}$. Meanwhile, the Hamiltonian should be understood as a limit $-\lim_{\varepsilon\downarrow 0}\int_0^T A_\varepsilon \cdot 1_{(-\varepsilon,\varepsilon)}(\omega_t)dt$ in a certain manner, where $A_\varepsilon$ ($\uparrow \infty$ as $\varepsilon \downarrow 0$) is a constant depending on $\gamma$. This parameter $\gamma$ plays the role of inverse temperature in associated Gibbs measure, which exhibits a phase transition with critical value $\gamma_{cr}=0$, and $\gamma=-\infty$ corresponds to the underlying case. Indeed, in the diffusive state $\gamma<0$, $\mathbf{P}_{\gamma, T}$ converges to the Wiener measure under suitable scaling as $T\uparrow \infty$ and $\sigma(\mathcal{L}_\gamma)=(-\infty, 0]$. Note that when $\gamma=-\infty$, $\mathcal{L}_\gamma$ is exactly the Laplacian operator. In the globular state $\gamma>0$, $\sigma(\mathcal{L}_\gamma)=(-\infty, 0]\cup \{\lambda_0(\gamma):=\gamma^2/2\}$ and the limiting process has the ergodic measure $\psi_\gamma^2(x)dx$, where $\psi_\gamma$ is the eigenfunction of $\mathcal{L}_\gamma$ with the solo eigenvalue $\lambda_0(\gamma)=\gamma^2/2$. The behaviours of polymer near $\gamma_{cr}=0$ are also analysed in \cite{bib9}. 

In this paper, we are concerned with a generalization based on $\alpha$-stable process $W^\alpha$ with $\alpha\in (0,2)$ of this continuum polymer model with delta potential. 
%Recently, this continuum polymer model with delta potential is generalized to a non-local one based on $\alpha$-stable process  in \cite{bib5}. 
For the sake of brevity, we only consider the isotropic case, where the transition density of $W^\alpha$ is given by \eqref{EQ2PTX}. Particularly, $\Delta^{\alpha/2}:=-(-\Delta)^{\alpha/2}$, not $\frac{1}{2}\Delta^{\alpha/2}$, is the generator of $W^\alpha$. 
This generalization was first raised in \cite{bib5} mainly from analytic viewpoint. At a heuristic level, the analogical operator, denoted by $\mathcal{A}_\gamma$, of $\mathcal{L}_\gamma$ may be informally written as 
\begin{equation}\label{EQ1DAB}
	\cA_\gamma \doteq\Delta^{\alpha/2}+\beta_\gamma\cdot \delta_0,
\end{equation}
where  $\beta_\gamma$ is a certain constant depending on $\gamma$ (such that $\beta_{-\infty}=0$; see \cite[\S3]{bib5}) and $\gamma$ plays the role of inverse temperature. Strictly speaking, $\mathcal{A}_\gamma$ is a self-adjoint extension on $L^2(\mathbb{R}^d)$ of $\Delta^{\alpha/2}$ restricted to $C_c^\infty(\mathbb{R}^d\setminus \{0\})$. The rigorous statement is phrased in  \cite[Theorem~3.3]{bib5} for either of the following cases:  
\begin{itemize}
\item[(i)] $d=1$ and $\alpha>1$;
\item[(ii)] $d=1$ or $2$ and $\alpha=d$;
\item[(iii)] $d/2<\alpha<d$.
\end{itemize}  
In the cases (i) and (iii), phase transition exists with critical value $\gamma_{cr}=0$.  In the globular state $\gamma>0$, $\mathcal{A}_\gamma$ possesses a solo eigenvalue $\lambda_\gamma>0$. Meanwhile universality is demonstrated in the concrete expression of $\lambda_\gamma$ as presented in \cite[\S3.1 and \S3.3]{bib5}; but at this time, not only dimension $d$ but also $\alpha$ is involved. However in the case (ii), no phase transitions exhibit and every $\gamma$ corresponds to a globular state. 
The associated Gibbs measure (at $\gamma$) in a strict sense is also obtained in \cite{bib5}. In abuse of notation, we still denote the configuration space by $\Omega_T=D([0,T], \mathbb{R}^d)$, i.e. the family of all c\`adl\`ag paths in $\bR^d$. Fix a starting point $x$ of the underlying process $W^\alpha$ (we take $x=0$ in \eqref{EQ1HOT} tacitly). Let $p_\gamma(t,x,y)$ be the fundamental solution of 
\[
	\frac{\partial u}{\partial t}=\mathcal{A}_\gamma u.
\]
When $\gamma=-\infty$, we write $p$ for $p_{-\infty}$ (i.e. the transition density of $W^\alpha$) in abbreviation. 
Then the partition function, denoted by $Z_{\gamma, T}(x)$, and the Gibbs measure, denoted by $\mathbf{P}^x_{\gamma,T}$, are formulated as follows:
\begin{equation}\label{EQ1ZTX}
\begin{aligned}
&Z_{\gamma, T}(x)=\int_{\mathbb{R}^d}p_\gamma(T,x,y)dy,\\
&\bfP_{\gamma, T}^x(\{\omega\in \Omega_T: \omega(t_1)\in A_1,\cdots,\omega(t_n)\in A_n\}) \\
&\quad =Z_{\gamma,T}^{-1}(x)\int_{A_1 \times \cdots\times A_n \times \mathbb{R}^d} \prod_{1\leq i\leq n+1}p_\gamma(t_i-t_{i-1},x_{i-1},x_i) dx_{n+1}\cdots dx_1,
\end{aligned}
\end{equation}
where $A_1,\cdots, A_n$ are Borel subsets of $\mathbb{R}^d$, $x_0=x$ and $0=t_0<t_1<\cdots <t_n<t_{n+1}=T$. The case $\gamma=-\infty$ corresponds to the truncated $\alpha$-stable process and for $\gamma$ in the globular state, $\mathbf{P}^x_{\gamma, T}$ converges  to a probability measure $\mathbf{P}^x_\gamma$ on $\Omega:=D([0,\infty),\bR^d)$ inducing a canonical process with an ergodic measure $\psi_\gamma^2(x)dx$, where $\psi_\gamma$ is the ground state of $\mathcal{A}_\gamma$, as $T\uparrow \infty$ (see \cite[Theorem~4.1]{bib5}). To our knowledge, however, no analogical limits were obtained for a non-globular state. Note incidentally that the analogical approach of \eqref{EQ1ZTX} for $\mathcal{H}_\beta$ or $\mathcal{L}_\gamma$ is still available, see e.g. \cite{bib8}.

Our paper aims to study this polymer model of non-local type from probabilistic viewpoint by means of so-called Dirichlet forms. Although a conception in functional analysis, Dirichlet forms are closely linked with Markov processes in probability theory due to several seminal works by Fukushima in 1970's. It is now well known that a Dirichlet form satisfying so-called regular condition is always associated with a nice Markov process. The notions related to them are referred to \cite{bib19, bib11}. 

The first main result stated in Theorem~\ref{THM21} derives the associated Dirichlet form of the Markov process $X^{(\gamma)}$ induced by $\bfP^x_\gamma$ for a globular state $\gamma>0$, i.e. $X^{(\gamma)}_t(\omega):=\omega(t)$ for all $\omega\in \Omega$ and $t\geq 0$. This Dirichlet form denoted by $(\sE^{(\gamma)}, \sF^{(\gamma)})$ is regular with a core $C_c^\infty(\bR^d)$. Formulation of it builds a new bridge between analytic and probabilistic characterizations of globular states. On one hand, $\cA_\gamma$ is linked with the generator of $(\sE^{(\gamma)}, \sF^{(\gamma)})$ in \eqref{EQ2DAG}. On the other hand, more properties of $X^{(\gamma)}$ can be obtained by virtue of the theory of Dirichlet forms. For example, $X^{(\gamma)}$ is irreducible, recurrent and consequently ergodic as explained in \eqref{EQ2TTP}. 

As we see in \eqref{EQ2FGF}, the Dirichlet space $\sF^{(\gamma)}$ is a weighted Sobolev space of fractional order,  the weight function $\psi_\gamma$ in which is nothing but the resolvent density of $W^\alpha$ with parameter $\lambda_\gamma$ in \eqref{EQ2LGG}. Recall that the limiting Gibbs measure is not obtained for the critical case $\gamma=0$ in \cite{bib5}. However, the above Dirichlet form can be extended to the one with parameter $\gamma=0$ in a truly straightforward way: Replace the weight function $\psi_\gamma$ by $\psi_0:=u_0$, i.e. the Riesz potential kernel as presented in \eqref{EQ2UXP} (note that $\lim_{\gamma\downarrow 0}\lambda_\gamma=0$). This extension works for the third case $d/2<\alpha<d$, since the existence of $u_0$ relies on the transience of $W^\alpha$. Analogically we will show in Theorem~\ref{THM22} that this new Dirichlet form, denoted by $(\sE^{(0)},\sF^{(0)})$, is also regular with a core $C_c^\infty(\bR^d)$. Again its associated Markov process $X^{(0)}$ is irreducible and recurrent. But the symmetric measure of it is not finite, thus the ergodicity manifests a different limiting behaviour.  There are at least two evidences for that $X^{(0)}$ should correspond to the right Gibbs measure at $\gamma=0$. Firstly, the generator of $(\sE^{(0)},\sF^{(0)})$ is linked with $\cA_0$ in the same manner as \eqref{EQ2DAG}. Secondly, as will be explained later, $X^{(\gamma)}$ is convergent to $X^{(0)}$ as $\gamma\downarrow 0$ in a certain meaning. This continuity in $\gamma$ is in agreement with the behaviour of polymer near the critical point $\gamma_{cr}=0$ exhibited by the continuity of $\gamma\mapsto \lambda_\gamma$ near $\gamma_{cr}$.

%\tikzstyle{results}=[ellipse ,text centered,draw=black]
%\tikzstyle{decisions} =[rectangle, rounded corners, text centered, draw = black]
\tikzstyle{decisions} =[text centered]

%font=\small, 调整节点文本字体大小
% 箭头形式
%\tikzstyle{arrow} = [dashed,>= stealth]
\tikzstyle{pre}=[<-,shorten <=1pt,>=stealth',semithick] 
\tikzstyle{rig}=[->,shorten <=1pt,>=stealth',semithick] 
\tikzstyle{mid}=[<->,shorten <=1pt,>=stealth',semithick] 

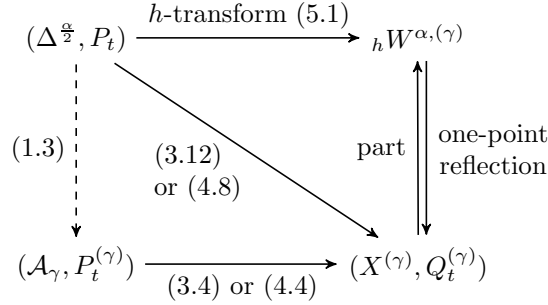
\begin{figure}
\centering
\begin{tikzpicture}[node distance=1cm]
%定义具体形状和相关位置
\node[decisions](A){($\Delta^{\frac{\alpha}{2}}, P_t$)};
\node[decisions,below of=A,yshift=-2cm,xshift=0cm](B){$(\cA_\gamma, P^{(\gamma)}_t)$};
\node[decisions,right of=A,yshift=-0cm,xshift=3.5cm](C){${}_hW^{\alpha,(\gamma)}$};
\node[decisions,below of=C,yshift=-2cm,xshift=0cm](D){$(X^{(\gamma)}, Q^{(\gamma)}_t)$};
\node[decisions,below of=C,yshift=-2cm,xshift=0cm](E){};

\draw [pre] (C) -- node [anchor=south] {$h$-transform \eqref{EQ5HPG}} (A);
\draw [pre] (D) -- node [anchor=north] {\eqref{EQ3DAF} or \eqref{EQ4DAF}} (B);
\draw [>=latex, pre] ([xshift= 3pt] D.north) -- node [anchor=west,  text width=1.5cm, text centered] {one-point reflection} ([xshift= 3pt] C.south);
\draw [>=latex, rig] ([xshift=-3pt]  D) -- node [anchor=east, text width=0.7cm, text centered] {part} ([xshift=-3pt]  C);
\draw [rig] (A) -- node[xshift=-0.6cm, yshift=-0.3cm, text width=1.2cm]{\eqref{EQ3RGL} or \eqref{EQ4RLF}} (D);
\draw [rig, dashed] (A) -- node[xshift=-0.5cm, yshift=0cm]{\eqref{EQ1DAB}} (B);
\end{tikzpicture}
\caption{Globular or critical state under $h$-transform}\label{FIG1}
\end{figure}

Approach of Dirichlet forms has far more advantages in characterizing the probabilistic counterparts of globular and critical state. In \S\ref{SEC5}, we shall figure out a clear relation between $X^{(\gamma)}$ and $W^\alpha$ for every $\gamma\geq 0$. The first crucial fact is that the origin $0$ is of positive capacity relative to $X^{(\gamma)}$ as stated in Theorem~\ref{THM44}. This illustrates that $X^{(\gamma)}$ feels a strong attraction to the origin (see \eqref{EQ2PXG}), as usually appears in one-dimensional models. At a heuristic level, it is a reflection of that $\cA_\gamma$ has infinite potential at $0$ as we can see in \eqref{EQ1DAB}. In addition, the part process of $X^{(\gamma)}$ outside the origin, obtained by killing $X^{(\gamma)}$ once upon leaving $\bR^d\setminus \{0\}$, is identified with the $h$-transformed process, denoted by ${}_hW^{\alpha,(\gamma)}$, of $W^\alpha$ with $h=\psi_\gamma$. On the contrary, $X^{(\gamma)}$ is the unique one-point reflection of ${}_hW^{\alpha,(\gamma)}$ at $0$ in the sense of \cite{bib22}. As a result, we can summarize these probabilistic counterparts in a road map illustrated in Figure~\ref{FIG1}. Further interesting properties of $X^{(\gamma)}$ can be obtained from this alternative characterization. For example, $0$ is regular for itself with respect to $X^{(\gamma)}$; and the paths of $X^{(\gamma)}$ are not only c\`adl\`ag but also continuous at the moments $t$ when $X^{(\gamma)}_t=0$, although its associated Dirichlet form contains no diffusion part.

Another main result is that $X^{(\gamma)}$ converges to $X^{(0)}$ as $\gamma\downarrow 0$ in the following sense: Take $\gamma_n\downarrow 0$ and a non-negative function $\phi$ on $\bR^d$ such that
\[
	\phi/\psi_{\gamma_1}\in L^2(\bR^d),\quad \int_{\bR^d}\phi(x)dx=1,
\]
then $\bfP^\phi_{\gamma_n}(\cdot):=\int_{\bR^d}\bfP^x_{\gamma_n}(\cdot)\phi(x)dx$ converges to $\bfP^\phi_{0}(\cdot):=\int_{\bR^d}\bfP^x_0(\cdot)\phi(x)dx$ weakly on $\Omega$ endowed with the Skorohod topology as $n\rightarrow \infty$. The theory of Dirichlet forms plays an important role in the proof of it as well. Indeed, Mosco convergence of $(\sE^{(\gamma_n)},\sF^{(\gamma_n)})$ demonstrated in Theorem~\ref{THM63} leads to the convergence of finite dimensional distributions of $X^{(\gamma_n)}$, and to prove their tightness, an inequality concerning capacity, analysis of quasi-continuous functions and so-called Fukushima's decomposition are all employed.

Throughout this paper we will concentrate in globular and critical states for the case $\frac{d}{2}<\alpha<d$. For another two cases mentioned earlier, the characterization by means of Dirichlet forms is still available but only for globular states, since we cannot find a suitable substitution of $\psi_0$ when $W^\alpha$ is recurrent at present. The state $\gamma<0$ is not under consideration either, because the expected counterpart is nothing but $W^\alpha$. We wish to treat them in a future work. It is also worth pointing out that the Brownian case, i.e. $\alpha=2$ and $d=3$, has been explored by the first named author and his co-author in \cite{bib1}. Nevertheless, the current case of non-local type is much more involved, as we see the proofs of main results are far from routine.

The rest of this paper is organized as follows. In \S\ref{SEC2}, we will present the expression of associated Dirichlet form of $X^{(\gamma)}$ for every $\gamma\geq 0$. The proof will be postponed to \S\ref{SEC3} for globular states and to \S\ref{SEC4} for critical state. The road map illustrated in Figure~\ref{FIG1} will be completed with the help of a theorem in \S\ref{SEC5} providing an alternative characterization of $X^{(\gamma)}$ via $h$-transform. Finally, the weak convergence of $X^{(\gamma)}$ as $\gamma\downarrow 0$ will be proved in \S\ref{SEC6}. 

\subsection*{Notations}
Let us put some often used notations here for handy reference, though we may restate them when they appear.

The notation ``$:=$'' is read as ``to be defined as". For $x,\xi\in \bR^d$, $\langle x, \xi\rangle$ means the inner product between $x$ and $\xi$ and $|x|$ stands for the Euclidean norm of $x$. Given a domain $D\subset \bR^d$, the families $C_c(D), C_0(D)$ and $C_c^\infty(D)$ are those of all continuous functions on $D$ with compact support, all continuous functions on $D$ vanishing on the boundary of $D$ or at $\infty$ and all smooth functions on $D$ with compact support respectively. Given a continuous function $f$ with compact support, $\text{supp}[f]$ stands for its support, i.e. the closure of $\{x: f(x)\neq 0\}$. For every $r>0$, $B(r):=\{x: |x|<r\}$. The notation $\|\cdot\|_\infty$ means the supremum norm of a bounded function. Given a Hilbert space $H$, $\|\cdot\|_H$ stands for its norm and $(\cdot,\cdot)_H$ stands for its inner product. 
Given an operator $\mathcal{L}$, $\cD(\cL)$ stands for the domain of $\cL$ tacitly. 

The symbol $\lesssim$ (resp. $\gtrsim$) means that the left (resp. right) term is bounded by the right (resp. left) term multiplying a nonessential constant. In addition, $\dagger \approx \ddagger$ means that there is a nonessential constant $C>1$ such that $\frac{1}{C}\dagger \leq \ddagger \leq C\dagger$. There are several fixed constants throughout this paper: $c_{-\alpha,d}$, $c_{\alpha,d}$ and $c(\alpha,d)$ first appear in \eqref{EQ2UXP}, \eqref{EQ2DGH} and \eqref{EQ2LGG} respectively. Otherwise a constant attached with subscript means it depends on the terms in subscript. Note that almost all constants are relevant to $d$ and $\alpha$, and we ignore them in subscript if no confusions cause. 

The semigroup and resolvent of isotropic $\alpha$-stable process are denoted by $P_t$ and $U_\lambda$ respectively. Accordingly, the transition density and resolvent density are $p_t$ and $u_\lambda$. The Rieze potential kernel $u_0$ is given by \eqref{EQ2UXP}. For every finite $\gamma$, $P^{(\gamma)}_t$ stands for the semigroup associated with $\cA_\gamma$. The semigroup and resolvent of $X^{(\gamma)}$ for $\gamma\geq 0$ are denoted by $Q^{(\gamma)}_t$ and $R^{(\gamma)}_\lambda$ respectively. Meanwhile, $\psi_\gamma:=u_{\lambda_\gamma}$ where $\lambda_\gamma$ is given by \eqref{EQ2LGG}. 

The notions related to Dirichlet forms are referred to \cite{bib19, bib11}. Particularly, every function in a Dirichlet space is taken to be a quasi-continuous version if without other statements.

%For a domain $E\subset\mathbb{R}^d$, the function classes $C(E), C^{\infty}(E)$ and $C_{c}^{\infty}(E)$ are continuous functions, smooth functions and smooth functions with compact support on $E$ respectively. For a Radon measure $\mu$ on $E$, the Hilbert space of all $\mu$-square-integrable functions on $E$ is denoted by $L^2(E,\mu)$ or $L^2(\mu)$. And its norm will be written as $\| \cdot\|_{L^2(\mu)}\|$. If $(\mathscr{E},\mathscr{F})$ is a Dirichlet form on  $L^2(E,\mu)$, for $u\in\mathscr{F}, \mathscr{E}_1(u,u)=\mathscr{E}(u,u)+\|u\|_{L^(\mu)}^2$. $\mathscr{F}_b$ is bounded functions in $\mathscr{F}$. $\mathscr{F}_e$ is the extended Dirichlet space of $\mathscr{F}$. The 1-capacity of $(\mathscr{E},\mathscr{F})$ will be denoted by Cap. $B_r :=\{x\in\mathbb{R}^d, \|x\|\leq r\}$.

\section{Probabilistic counterparts of globular and cricital states}\label{SEC2}

Fix $\alpha \in (\frac{d}{2},2\wedge d)$ and let $W^\alpha=\{\Omega=D([0,\infty), \bR^d), (\bfP^x)_{x\in \bR^d}, (W^\alpha_t)_{t\geq 0}\}$ denote the isotropic $\alpha$-stable process on $\mathbb{R}^d$, i.e. $W^\alpha$ is a L\' evy process (see e.g. \cite{bib7}) whose transition density $p(t,x,y)=p(t,0, x-y)=:p_t(x-y)$ with respect to the Lebesgue measure is given by
\begin{equation}\label{EQ2PTX}
\hat{p}_t (\xi):=\int_{\mathbb{R}^d}e^{i\langle x,\xi\rangle}p_t(x)dx=e^{-t|\xi|^\alpha},\quad \xi\in \mathbb{R}^d.
\end{equation}
Its resolvent kernel $u_\lambda(x)$ for $\lambda>0$ is equal to
\begin{equation}\label{EQ2ULX}
	u_\lambda(x)=\int_0^\infty e^{-\lambda t}p(t,0,x)dt=\frac{1}{(2\pi)^d}\int_{\mathbb{R}^d}\frac{e^{-i\langle \xi,x\rangle}}{\lambda+|\xi|^\alpha}d\xi.
\end{equation}
Several properties of $u_\lambda$ are presented in Lemma~\ref{LMB}. Particularly,
 $u_\lambda\in L^2(\mathbb{R}^d)$ due to $\alpha>d/2$. In addition, $\alpha<d$ leads to
\begin{equation}\label{EQ2UXP}
	u_0(x):=\int_0^\infty p(t,0,x)dt=\uparrow \lim_{\lambda \downarrow 0} u_\lambda(x)=c_{-\alpha,d}\cdot |x|^{\alpha-d},
\end{equation}
where $c_{-\alpha,d}=\frac{2^{-\alpha}\Gamma(\frac{d-\alpha}{2})}{\pi^{d/2}\Gamma(\frac{\alpha}{2})}$ and $\Gamma$ is the so-called Gamma function. 
It is well known that the generator of $W^\alpha$ is $\Delta^{\alpha/2}$, which is symmetric with respect to the Lebesgue measure, and its associated Dirichlet form $(\mathscr{G},\mathcal{D}(\mathscr{G}))$ on $L^2(\mathbb{R}^d)$ is 
\begin{equation}\label{EQ2DGH}
\begin{aligned}
\mathcal{D}(\mathscr{G})&=H^{\alpha/2}(\bR^d)=\{f\in L^2(\bR^d): \sG(f,f)<\infty\}, \\
\sG(f,g)&=\frac{c_{\alpha,d}}{2}\int_{\bR^d\times \bR^d\setminus D}\frac{\left(f(x)-f(y)\right)\left(g(x)-g(y)\right)}{|x-y|^{d+\alpha}}dxdy, \quad f,g\in \cD(\sG),
\end{aligned}
\end{equation}
where $D$ is the diagonal of $\mathbb{R}^{d}\times\mathbb{R}^{d}$ and $c_{\alpha,d}=\frac{2^{\alpha}\Gamma(\frac{\alpha+d}{2})}{\pi^{\frac{d}{2}}|\Gamma(-\frac{\alpha}{2})|}$.

As mentioned in \S\ref{SEC1}, the self-adjoint extensions on $L^2(\bR^d)$ of $\Delta^{\alpha/2}$ restricted to $C_c^\infty(\bR^d\setminus \{0\})$ are parametrized by a constant $\gamma\in \{-\infty\}\cup \bR$. For $\gamma>\gamma_{cr}=0$ in a globular state, the corresponding self-adjoint extension $\cA_\gamma$ has a solo eigenvalue 
\begin{equation}\label{EQ2LGG}
	\lambda_\gamma=\left(\frac{\gamma}{c(\alpha,d)}\right)^{\alpha/(d-\alpha)},
\end{equation}
where $c(\alpha,d)=\frac{1}{(2\pi)^d}\int_{\bR^d}\frac{d\xi}{|\xi|^\alpha (1+|\xi|^\alpha)}$, with the eigenfunction
\[
	\psi_\gamma:=u_{\lambda_\gamma},
\]
where $u_{\lambda_\gamma}$ is given by \eqref{EQ2ULX} with $\lambda_\gamma$ in place of $\lambda$ (see \cite[\S3.3]{bib5}). One of the main purposes in this section is to present an alternative description of the probabilistic counterpart of this globular state by means of Dirichlet forms. To phrase the result, we prepare some notations. For $\gamma>0$, set $\fm_\gamma(dx):=\psi_\gamma(x)^2dx$, which is a finite measure, and define another operator
\begin{equation}\label{EQ2DAG}
\begin{aligned}
&\cD(\sA_\gamma):=\{f\in L^2(\bR^d, \fm_\gamma): f\cdot \psi_\gamma \in \cD(\cA_\gamma)\}, \\
&\sA_\gamma f := \frac{1}{\psi_\gamma} \cdot \cA_\gamma\left(f\cdot \psi_\gamma\right)-\lambda_\gamma f,\quad f\in \cD(\sA_\gamma). 
\end{aligned}
\end{equation}
It is not hard to find that $\mathscr{A}_\gamma$ is self-adjoint on $L^2(\bR^d, \fm_\lambda)$ with $\sA_\gamma 1=0$. 

\begin{theorem}\label{THM21}
Fix $\gamma>0$. Set $\fm_\gamma(dx)=\psi_\gamma(x)^2dx$ and let $X^{(\gamma)}=\{\Omega, \bfP^x_\gamma, X^{(\gamma)}_t\}$ be the process corresponding to the globular state at $\gamma$, i.e. $\Omega=D([0,\infty),\bR^d)$, $\mathbf{P}^x_\gamma$ is the probability measure on $\Omega$ mentioned below \eqref{EQ1ZTX} and $X^{(\gamma)}_t(\omega):=\omega(t)$ for $\omega\in \Omega$. Then $X^{(\gamma)}$ is $\fm_\gamma$-symmetric and associated with a regular Dirichlet form on $L^2(\bR^d, \fm_\gamma)$ as follows:
\begin{equation}\label{EQ2FGF}
\begin{aligned}
\sF^{(\gamma)} &=\{f\in L^2(\bR^d, \fm_\gamma): \sE^{(\gamma)}(f,f)<\infty\}, \\
\sE^{(\gamma)}(f,f) &=\frac{c_{\alpha,d}}{2}\int_{\bR^d\times \bR^d\setminus D}\frac{\left(f(x)-f(y)\right)^2}{|x-y|^{d+\alpha}}\psi_\gamma(x)\psi_\gamma(y)dxdy,\quad f\in \sF^{(\gamma)}. 
\end{aligned}\end{equation}
Furthermore, $C_c^\infty(\bR^d)$ is a core of $(\sE^{(\gamma)}, \sF^{(\gamma)})$, whose generator is $\sA_\gamma$ given by \eqref{EQ2DAG}. 
\end{theorem}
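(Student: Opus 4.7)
The plan is to identify $X^{(\gamma)}$ with a ground-state (Doob) $h$-transform of the process associated with $\mathcal{A}_\gamma$, and then translate the known expression of $\mathcal{A}_\gamma$ (which acts as $\Delta^{\alpha/2}$ away from the origin) into the jump-type Dirichlet form on $L^2(\bR^d,\fm_\gamma)$. The analogous result for $\alpha=2$, $d=3$ in \cite{bib1} serves as a blueprint, but several non-local ingredients need to be handled by hand.

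First I would pin down the semigroup of $X^{(\gamma)}$. Writing $P_t^{(\gamma)}$ for the (sub-Markov) semigroup generated by $\cA_\gamma$ on $L^2(\bR^d)$ and using the spectral decomposition together with the fact that $\psi_\gamma$ is the unique normalised ground state with eigenvalue $\lambda_\gamma>0$, one gets the Perron-type asymptotics $P_T^{(\gamma)}\mathbf{1}(x)\sim c\,e^{\lambda_\gamma T}\psi_\gamma(x)$ as $T\uparrow\infty$. Plugging this into the finite-dimensional formulas in \eqref{EQ1ZTX} identifies the limiting transition density
\[
q_t^{(\gamma)}(x,y) \;=\; e^{-\lambda_\gamma t}\,\frac{\psi_\gamma(y)}{\psi_\gamma(x)}\,p_\gamma(t,x,y),
\]
that is, the semigroup $Q^{(\gamma)}_t f(x) = e^{-\lambda_\gamma t}\psi_\gamma(x)^{-1}P^{(\gamma)}_t(\psi_\gamma f)(x)$. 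By construction $Q^{(\gamma)}_t$ is Markov (since $\mathcal{A}_\gamma\psi_\gamma=\lambda_\gamma\psi_\gamma$), conservative and symmetric on $L^2(\bR^d,\fm_\gamma)$, and its $L^2$-generator is precisely $\sA_\gamma$ in \eqref{EQ2DAG}. This identifies the Dirichlet space at the abstract level.

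The heart of the argument is computing the explicit form \eqref{EQ2FGF}. For $f,g\in C_c^\infty(\bR^d\setminus\{0\})$ the product $f\psi_\gamma$ lies in $\cD(\cA_\gamma)$ and, because the self-adjoint extension $\cA_\gamma$ agrees with $\Delta^{\alpha/2}$ on functions supported away from $0$, one has
\[
\sE^{(\gamma)}(f,g) \;=\; \lambda_\gamma(f,g)_{\fm_\gamma} + \sG(f\psi_\gamma,g\psi_\gamma).
\]
I would then perform the ``Leibniz'' decomposition
\begin{align*}
(f(x)\psi_\gamma(x)-f(y)\psi_\gamma(y))\,(g(x)\psi_\gamma(x)-g(y)\psi_\gamma(y))
&= \psi_\gamma(x)\psi_\gamma(y)(f(x)-f(y))(g(x)-g(y))\\
&\quad + f(x)g(x)(\psi_\gamma(x)-\psi_\gamma(y))\psi_\gamma(x) \\
&\quad + f(y)g(y)(\psi_\gamma(y)-\psi_\gamma(x))\psi_\gamma(y) + S(x,y),
\end{align*}
where the cross term $S$ is antisymmetric in $(x,y)$ and therefore vanishes after integration against the symmetric kernel $|x-y|^{-d-\alpha}$. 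Dividing by $|x-y|^{d+\alpha}$, integrating, and using that $\psi_\gamma=u_{\lambda_\gamma}$ satisfies $\Delta^{\alpha/2}\psi_\gamma=\lambda_\gamma\psi_\gamma$ on $\bR^d\setminus\{0\}$ (a consequence of \eqref{EQ2ULX} and the resolvent equation), the two ``diagonal'' pieces integrated against $fg\psi_\gamma$ produce exactly $-\lambda_\gamma(f,g)_{\fm_\gamma}$, cancelling the contribution from $\lambda_\gamma(f,g)_{\fm_\gamma}$ and leaving the desired expression \eqref{EQ2FGF}.

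To conclude, I would extend the identity from $C_c^\infty(\bR^d\setminus\{0\})$ to the full core $C_c^\infty(\bR^d)$ and prove density in $(\sE^{(\gamma)}_1)^{1/2}$-norm. Here one uses the rotationally symmetric cut-off $\chi_\varepsilon$ vanishing on $B(\varepsilon)$: since $\psi_\gamma(x)\approx |x|^{\alpha-d}$ near $0$ with $2(d-\alpha)<d$ (because $\alpha>d/2$), both $\fm_\gamma$-mass and the non-local energy picked up by $\chi_\varepsilon f$ near $0$ vanish as $\varepsilon\downarrow 0$; this removes the singularity and yields the extension. Standard mollification and truncation at infinity then show that $C_c^\infty(\bR^d)$ is dense in $\sF^{(\gamma)}$ with the $\sE^{(\gamma)}_1$-norm, giving regularity and the claimed core. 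The main technical obstacle I anticipate is precisely this extension step: justifying that $f\psi_\gamma\in\cD(\cA_\gamma)$ (or at least is in the form domain with the correct action) for general $f\in C_c^\infty(\bR^d)$, and controlling the delicate singular behaviour of $\psi_\gamma$ at $0$ in the double integral near the diagonal, for which I would rely on the fine estimates of $u_\lambda$ collected in Lemma~\ref{LMB} together with the explicit power-law $\psi_\gamma(x)\asymp|x|^{\alpha-d}$ in a neighbourhood of the origin.
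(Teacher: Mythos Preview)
Your identification of $Q^{(\gamma)}_t$ as the ground-state transform of $P^{(\gamma)}_t$ is correct and matches \S\ref{SEC315}, and your Leibniz decomposition is essentially the computation in \eqref{EQ3FXG}. The genuine gap is the extension step: the cut-off $\chi_\varepsilon$ vanishing on $B(\varepsilon)$ does \emph{not} send $\chi_\varepsilon f$ to $f$ in $\sE^{(\gamma)}_1$-norm. Indeed, $\{0\}$ has positive capacity for $(\sE^{(\gamma)},\sF^{(\gamma)})$ (Theorem~\ref{THM44}), so for $f\in C_c^\infty(\bR^d)$ with $f(0)\neq 0$ the function $(1-\chi_\varepsilon)f$ dominates a positive constant on a neighbourhood of $0$ and hence $\sE^{(\gamma)}_1((1-\chi_\varepsilon)f,(1-\chi_\varepsilon)f)\gtrsim\text{Cap}^{(\gamma)}(\{0\})>0$ uniformly in $\varepsilon$. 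Equivalently, your formula $\sE^{(\gamma)}(f,g)=\lambda_\gamma(f,g)_{\fm_\gamma}+\sG(f\psi_\gamma,g\psi_\gamma)$ breaks down precisely on such $f$, because $f\psi_\gamma\notin H^{\alpha/2}(\bR^d)=\cD(\sG)$ when $f(0)\neq 0$ (one checks $\int|\xi|^\alpha(\lambda_\gamma+|\xi|^\alpha)^{-2}d\xi=\infty$ for $\alpha<d$). If your cut-off argument worked, $C_c^\infty(\bR^d\setminus\{0\})$ would be a core and the induced operator would be $\Delta^{\alpha/2}$ rather than $\cA_\gamma$; this is exactly the contradiction exploited in the proof of Theorem~\ref{THM44}.

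The paper sidesteps this by never approximating through the origin. It considers both the maximal form $(\sE^{(\gamma)},\sF^{(\gamma)})$ and the $\sE^{(\gamma)}_1$-closure $(\sE^{(\gamma)},\bar\sF)$ of $C_c^\infty(\bR^d)$, and shows (\S\ref{SEC313}--\S\ref{SEC314}) that the self-adjoint operators $\cA,\bar\cA$ on $L^2(\bR^d)$ built from each via \eqref{EQ3DAF} are \emph{both} extensions of $\Delta^{\alpha/2}|_{C_c^\infty(\bR^d\setminus\{0\})}$ having $\psi_\gamma$ as eigenfunction with eigenvalue $\lambda_\gamma$. The classification of such extensions from \cite{bib5} then forces $\cA=\bar\cA=\cA_\gamma$, whence $\bar\sF=\sF^{(\gamma)}$, which is the regularity statement. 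The only approximation needed is $1\in\bar\sF$, obtained by a cut-off $\tau_n(x)=\tau(x/n)$ at \emph{infinity}: the scaling computation \eqref{EQ3EGT} gives $\sE^{(\gamma)}(\tau_n-1,\tau_n-1)\lesssim n^{-(d-\alpha)}\to0$.
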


For the critical case $\gamma=\gamma_{cr}=0$, no probabilistic counterparts are obtained in \cite{bib5}. However, the analogues of \eqref{EQ2DAG} and \eqref{EQ2FGF} are still available. Indeed, set $\psi_0:=u_0$ in \eqref{EQ2UXP} and $\fm_0(dx):=\psi_0(x)^2dx$. Note that $\fm_0$ is positive Radon on $\bR^d$ since $\alpha>d/2$. Then the operator $\sA_0$ and the quadratic form $(\sE^{(0)}, \sF^{(0)})$ are well defined by letting $\gamma=\lambda_\gamma=0$ in \eqref{EQ2DAG} and \eqref{EQ2FGF} respectively. The analogical result of Theorem~\ref{THM21} below states the regularity of $(\sE^{(0)},\sF^{(0)})$, which leads to a probabilistic counterpart of the critical state $\gamma=0$, i.e. its associated Markov process denoted by $X^{(0)}:=\{\Omega, \bfP^x_0, X^{(0)}_t\}$.

\begin{theorem}\label{THM22}
The quadratic form $(\sE^{(0)}, \sF^{(0)})$ is a regular Dirichlet form on $L^2(\bR^d, \fm_0)$ with the generator $\sA_0$. Moreover, $C_c^\infty(\bR^d)$ is a core of $(\sE^{(0)}, \sF^{(0)})$. 
\end{theorem}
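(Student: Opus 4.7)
The plan is to mirror the strategy used for Theorem~\ref{THM21}, adapted to the critical case. Two new features arise relative to the globular state: the reference measure $\fm_0(dx)=\psi_0(x)^2dx$ with $\psi_0(x)=c_{-\alpha,d}|x|^{\alpha-d}$ is only $\sigma$-finite rather than finite (since $\psi_0$ fails to be in $L^2$ at infinity), and $\psi_0$ has a pole at the origin. Both are accommodated by $d/2<\alpha<d$: the lower bound ensures $\psi_0\in L^2_{\mathrm{loc}}(\bR^d)$, so $\fm_0$ is a positive Radon measure and $C_c^\infty(\bR^d)\subset L^2(\bR^d,\fm_0)$, while the upper bound gives $\psi_0(x)\to 0$ as $|x|\to\infty$. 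First I would check that $\sE^{(0)}$ is well-defined on $C_c^\infty(\bR^d)$: for $f\in C_c^\infty(\bR^d)$ the Lipschitz bound $(f(x)-f(y))^2\le C|x-y|^2$ together with $\alpha<2$ controls the diagonal, while the local $L^1$-integrability of $\psi_0^2$ handles the singularity at $0$. Symmetry and the Markov property (under normal contractions) are immediate from the pure-jump structure, and closedness of $(\sE^{(0)},\sF^{(0)})$ on $\{f\in L^2(\fm_0):\sE^{(0)}(f,f)<\infty\}$ follows from a routine Fatou's lemma argument.

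Next, the main task is to establish regularity by exhibiting $C_c^\infty(\bR^d)$ as a dense core. Uniform density of $C_c^\infty$ in $C_c(\bR^d)$ is standard mollification. For $\sE^{(0)}_1$-density in $\sF^{(0)}$, given $f\in\sF^{(0)}$ I would approximate in three stages: (i) truncate to a bounded function $f_N=(-N)\vee f\wedge N$, justified by Markovianity; (ii) multiply by a smooth cutoff $\chi_R$ equal to $1$ on $B(R/2)$ and supported in $B(R)$, and send $R\uparrow\infty$; (iii) mollify $f_N\chi_R$ by convolution against a smooth approximate identity to land in $C_c^\infty(\bR^d)$. For step (ii) the error splits, via $(ab-cd)^2\le 2(a-c)^2b^2+2c^2(b-d)^2$, into a tail piece controlled by dominated convergence using $\sE^{(0)}(f_N,f_N)<\infty$, and a commutator piece
\[
\int\!\!\int \frac{f_N(y)^2\bigl(\chi_R(x)-\chi_R(y)\bigr)^2\psi_0(x)\psi_0(y)}{|x-y|^{d+\alpha}}\,dx\,dy,
\]
which is estimated using the Lipschitz bound $|\chi_R(x)-\chi_R(y)|\le C(|x-y|/R)\wedge 1$ combined with the decay $\psi_0(x)=O(|x|^{\alpha-d})$ at infinity.

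Finally, to identify the generator with $\sA_0$ given by \eqref{EQ2DAG} at $\gamma=0$, I would appeal to the defining relation $\sA_0 f=\psi_0^{-1}\cA_0(f\psi_0)$ and the self-adjointness of $\cA_0$ on $L^2(\bR^d)$. For $f\in\cD(\sA_0)$ and a test $h\in C_c^\infty(\bR^d)$, direct rearrangement of the non-local integrand in $\sE^{(0)}(f,h)$ yields a weighted energy for $f\psi_0$ and $h\psi_0$ relative to $\Delta^{\alpha/2}$, producing $\sE^{(0)}(f,h)=-(\sA_0 f,h)_{L^2(\fm_0)}$ as required.

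The hardest step is stage (ii) of the regularity argument. The non-local kernel couples the pole of $\psi_0$ at $0$ with the region where $\chi_R$ varies near $|x|\sim R$, so one must simultaneously control both asymptotic regimes of $\psi_0$ against the cutoff's gradient $\|\nabla\chi_R\|_\infty\sim 1/R$. The range $d/2<\alpha<d$ is precisely the one where the near-origin square-integrability of $\psi_0$ and its far-field decay both hold and combine to make this estimate succeed, reflecting at the level of function spaces the same transience of $W^\alpha$ that makes $\psi_0=u_0$ meaningful in the first place.
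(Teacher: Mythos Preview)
Your outline for the Dirichlet-form part (closedness, Markovianity, and the truncation/cutoff steps (i)--(ii) of the density argument) matches the paper's \S\ref{SEC411}--\S\ref{SEC412} essentially exactly. Two points, however, are underdeveloped in ways that matter.

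\medskip
\textbf{Mollification (your stage (iii)).} You write ``mollify by convolution against a smooth approximate identity'', but the weight $\psi_0(x)\psi_0(y)$ is not translation-invariant, so the usual argument that $\rho_\delta*f\to f$ in the energy norm does not go through automatically. The paper handles this by passing to the function $F_f(x,y)=(f(x)-f(y))/|x-y|^{(d+\alpha)/2}$ in the weighted space $H=L^2(\psi_0(x)\psi_0(y)\,dxdy)$ and using the \emph{diagonal} convolution $h\star\rho_\delta(x,y)=\int h(x-z,y-z)\rho_\delta(z)\,dz$ together with two facts borrowed from \cite{bib18}: that $g\star\rho_\delta\to g$ in $H$ for $g\in C_c(\bR^{2d})$, and that $\|(\cdot)\star\rho_\delta\|_H\lesssim\|\cdot\|_H$ uniformly in $\delta$. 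Your proposal should either invoke these or supply an alternative argument explaining why the weight can be absorbed.

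\medskip
\textbf{Identification of the generator.} This is the real gap. Your plan is to take $f\in\cD(\sA_0)$, rearrange the integrand, and read off $\sE^{(0)}(f,h)=-(\sA_0 f,h)_{L^2(\fm_0)}$. But the algebraic identity \eqref{EQ3FXG} and the manipulations after it require $f\psi_0\in C_c^\infty(\bR^d\setminus\{0\})$ (so that $L_I(f\psi_0)$ exists and the boundary terms can be evaluated via Lemma~\ref{LMB}(3)); they do not extend to a general element of $\cD(\cA_0)$, which typically contains functions with a singularity at the origin and is not contained in $H^\alpha(\bR^d)$. What the rearrangement \emph{does} give, in both the paper and your sketch, is that the self-adjoint operator $\cA$ built from the generator of $(\sE^{(0)},\sF^{(0)})$ via \eqref{EQ4DAF} extends $\Delta^{\alpha/2}|_{C_c^\infty(\bR^d\setminus\{0\})}$. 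That only places $\cA$ in the one-parameter family $\{\cA_\gamma\}$; it does not single out $\gamma=0$. In the globular case the paper pins down the parameter by exhibiting $\psi_\gamma\in\cD(\cA)$ as an eigenfunction, but here $\psi_0\notin L^2(\bR^d)$, so that route is closed. The paper resolves this in \S\ref{SEC415} by an entirely different mechanism: it proves Mosco convergence $\sE^{(\gamma_n)}\to\sE^{(0)}$ (Theorem~\ref{THM63}), hence strong convergence of resolvents, and then lets $\gamma_n\downarrow 0$ in the explicit formula \eqref{EQ3RGL} to identify the resolvent of $(\sE^{(0)},\sF^{(0)})$ with the $\gamma=0$ case of the resolvent of $\cA_\gamma$ from \cite{bib5}. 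Your proposal has no substitute for this step.

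\medskip
Incidentally, you flag stage (ii) as the hardest part; in the paper the cutoff is a one-line scaling computation (cf.\ \eqref{EQ3EGT} at $\gamma=0$), while the genuine work lies in the weighted mollification and, above all, in the generator identification via Mosco convergence.
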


The proofs of these theorems are postponed to \S\ref{SEC3} and \S\ref{SEC4}. 
Instead, we point out two facts about $X^{(\gamma)}$ for $\gamma\geq 0$. The first one concerns their global properties. It will turn out in Propositions~\ref{THM45} and \ref{PRO41} that $X^{(\gamma)}$ is irreducible and recurrent. As a result, we can conclude that for $\gamma>0$ and any $x\in \bR^d$ (see \cite[Theorem~4.7.3]{bib11}),
\begin{equation}\label{EQ2TTP}
	\frac{1}{t}\int_0^t\bfP^x_\gamma (X^{(\gamma)}_s\in \cdot)ds \longrightarrow \pi_\gamma(\cdot):= \frac{\fm_\gamma(\cdot)}{\fm_\gamma(\bR^d)},\quad \text{weakly as }t\uparrow \infty. 
\end{equation} 
When $\gamma=0$, the probability measure on the left hand side is vaguely convergent to $0$ as $t\uparrow \infty$.  The second fact illustrates that $X^{(\gamma)}$ feels a strong attraction to the origin. Indeed, the capacity of $\{0\}$ relative to $\sE^{(\gamma)}$ is positive as shown in Theorem~\ref{THM44}. This property also leads to an alternative characterization of globular or critical state in \S\ref{SEC5} by means of Doob's well-known $h$-transform. Particularly it holds for $\sE^{(\gamma)}$-q.e. $x\in \bR^d$ (see \cite[Theorem~4.7.1]{bib11}), 
\begin{equation}\label{EQ2PXG}
	\bfP^x_{\gamma}(\sigma_0<\infty)=1,
\end{equation}
where $\sigma_0:=\inf\{t>0: X^{(\gamma)}_t=0\}$. 
Note incidentally that other singleton is always $\sE^{(\gamma)}$-polar.

\section{Globular states}\label{SEC3}

Fix $\gamma>0$. This section is mainly devoted to proving Theorem~\ref{THM21} and presenting some properties of $X^{(\gamma)}$.

\subsection{Proof of Theorem~\ref{THM21}}

This proof will be completed in several steps. 

\subsubsection{Step 1}\label{SEC311}
We prove $(\sE^{(\gamma)},\sF^{(\gamma)})$ is a Dirichlet form. To this end, set $$j_\gamma(x,dy):=|x-y|^{-(d+\alpha)}\psi_\gamma(y)\psi_\gamma(x)^{-1}dy.$$ By \cite[Example 1.2.4]{bib11}, it suffices to show
\begin{itemize}
\item[(j.1)] For any $\varepsilon >0$, $x\mapsto j_\gamma(x, \bR^d\setminus U_\varepsilon(x)):=\int_{y\in \bR^d\setminus U_\varepsilon(x)}j_\gamma(x,dy)$ is locally integrable with respect to $\fm_\gamma$, where $U_\varepsilon(x)$ is the $\varepsilon$-neighbourhood of $x$.
\item[(j.2)] $\int_{\bR^d} f(x)(j_\gamma g)(x)\fm_\gamma(dx)=\int_{\bR^d}(j_\gamma f)(x)g(x)\fm_\gamma(dx)$ for all $f, g\in\mathscr{B}^+(\bR^d)$, where $j_\gamma f(x):=\int f(y)j_\gamma(x,dy)$. 
\item[(j.3)] $\int_{K\times K\setminus D} |x-y|^2j_\gamma(x,dy)\fm_\gamma(dx)<\infty$ for any compact $K\subset\mathbb{R}^d$.
\end{itemize}
For (j.1), take $\varepsilon>0$ and an arbitrary compact set $K\subset\mathbb{R}^d$. Choose $r>1$ sufficiently large such that $K\subset B(r):=\{x\in \bR^d: |x|< r\}$. It follows from \eqref{EQ2UXP} that
\[
\begin{aligned}
\int_K j_\gamma(x,\bR^d\setminus U_\varepsilon(x))\fm_\gamma(dx)&=\int_K\int_{|x-y|>\varepsilon}|x-y|^{-(d+\alpha)}\psi_\gamma(y)\psi_\gamma(x)dydx\\
&\lesssim \int_K\int_{|x-y|>\varepsilon}|x-y|^{-(d+\alpha)}|x|^{\alpha-d}|y|^{\alpha-d}dydx.
\end{aligned}
\]
Denote $G_1:=\{y: |x-y|>\varepsilon,|y|>r\}$ and $G_2:=\{y:|x-y|>\varepsilon,|y|\leq r\}$. Then we have
\begin{equation}\label{EQ3KGX}
\begin{aligned}
\int_K\int_{G_1}|x-y|^{-(d+\alpha)}&|x|^{\alpha-d}|y|^{\alpha-d}dydx \\ &\leq 
\int_K\int_{G_1}\left(\frac{|y|}{|x-y|}\right )^{\alpha+d}|x|^{\alpha-d}|y|^{-2d}dydx.
\end{aligned}
\end{equation}
Take $x\in K$ and $y\in G_1$. Since $|y|>r$ and $|x|\leq \sup\{|z|:z\in K\}<r$, it follows that $|y|/|x-y|\leq r/(r-|x|)\leq C_{r,K}$ for a finite constant $C_{r,K}$. Hence the right hand side of \eqref{EQ3KGX} is not greater than
\[
C_{r,K}^{\alpha+d}\int_K|x|^{\alpha-d}dx\int_{|y|>r}|y|^{-2d}dy<\infty.
\]
In addition, 
\[
\int_K\int_{G_2}|x-y|^{-(d+\alpha)}|x|^{\alpha-d}|y|^{\alpha-d}dydx \leq  \varepsilon^{-(\alpha+d)}\int_K|x|^{\alpha-d}dx\int_{|y|\leq r}|y|^{\alpha-d}dy<\infty.
\]
Consequently, one can conclude (j.1). 
The second item (j.2) is obvious. For (j.3), we still take $r$ such that $K\subset B(r)$. When $\alpha+d\leq 2$, we have
\[
\begin{aligned}
	\int_{K\times K\setminus D} |x-y|^2j_\gamma(x,dy)\fm_\gamma(dx) &\leq \int_{K\times K\setminus D} |x-y|^{2-(\alpha+d)} |x|^{\alpha-d}|y|^{\alpha-d}dydx  \\
	&\leq (2r)^{2-(\alpha+d)}\int_{K\times K\setminus D}|x|^{\alpha-d}|y|^{\alpha-d}dydx<\infty. 
\end{aligned}\]
When $\alpha+d>2$, take a constant $\varepsilon>0$ and denote $K_1:=\{y:y\in K, |y-x|>\varepsilon\}$, $K_2:=\{y: y\in K, 0<|y-x|\leq \varepsilon\}$. Then
\[
\begin{aligned}
	\int_{K\times K\setminus D}& |x-y|^2j_\gamma(x,dy)\fm_\gamma(dx) \\
		&= \int_{K}\int_{K_1} |x-y|^2j_\gamma(x,dy)\fm_\gamma(dx)+\int_{K}\int_{K_2} |x-y|^2j_\gamma(x,dy)\fm_\gamma(dx).
\end{aligned}
\]
The first term is not greater than
\[
	\int_K\int_{K_1}|x-y|^{-(\alpha+d-2)}|x|^{\alpha-d}|y|^{\alpha-d}dydx\leq \varepsilon^{-(\alpha+d-2)}\left(\int_K|x|^{\alpha-d}dx\right)^2<\infty,
\]
and the second is not greater than
\[
\begin{aligned}
\int_K&\int_{K_2}|x-y|^{-(\alpha+d-2)}|x|^{\alpha-d}|y|^{\alpha-d}dydx\\
&\leq \left(\int_K\int_{K_2}\frac{|x|^{2(\alpha-d)}}{|x-y|^{\alpha+d-2}}dydx\right)^{1/2}\cdot\left(\int_K\int_{K_2}\frac{|y|^{2(\alpha-d)}}{|x-y|^{\alpha+d-2}}dydx\right)^{1/2}\\
&\leq \int_K|x|^{2(\alpha-d)}dx\int_{|y|\leq\varepsilon}|y|^{-(\alpha+d-2)}dy<\infty.
\end{aligned}
\]
Hence (j.3) is verified. 

\subsubsection{Step 2}\label{SEC312}
Note that (j.3) implies $C_c^\infty(\bR^d)\subset \sF^{(\gamma)}$ as well. Denote the $\sE^{(\gamma)}_1$-closure of $C_c^\infty(\bR^d)$ in $\sF^{(\gamma)}$ by $\bar\sF$. Then $(\sE^{(\gamma)}, \bar\sF)$ is also a Dirichlet form on $L^2(\bR^d,\fm_\gamma)$. Further let $\sA$ and $\bar\sA$ be the generators of  $(\sE^{(\gamma)}, \sF^{(\gamma)})$ and $(\sE^{(\gamma)}, \bar\sF)$ respectively. In this step, we show
\begin{equation}\label{EQ3CCRD}
\begin{aligned}
C_c^\infty&(\bR^d\setminus \{0\})\subset \cD(\sA)\cap \cD(\bar{\sA}), \\
\sA f(x)&=\bar{\sA}f(x) =\frac{c_{\alpha,d}}{\psi_\gamma(x)} \left(\text{p.v.}\int_{\bR^d} \frac{f(y)-f(x)}{|x-y|^{d+\alpha}}\psi_\gamma(y)dy\right) \\
& :=\frac{c_{\alpha,d}}{\psi_\gamma(x)} \left(\lim_{r\downarrow 0}\int_{y: |y-x|>r} \frac{f(y)-f(x)}{|x-y|^{d+\alpha}}\psi_\gamma(y)dy\right),\quad f\in C_c^\infty(\bR^d\setminus \{0\}),
\end{aligned}
\end{equation}
where the limit is in the sense of $L^2(\bR^d)$. To this end, we first show 
\begin{equation}\label{EQ3LFR}
	Lf(x):=\text{p.v.}\int_{\bR^d} \frac{f(y)-f(x)}{|x-y|^{d+\alpha}}\psi_\gamma(y)dy
\end{equation}
is well defined in $L^2(\bR^d)$ for all $f\in C_c^\infty(\bR^d\setminus \{0\})$. Fix such $f$ and take a compact set $K\subset \bR^d\setminus \{0\}$ such that $\text{supp}[f]\subset K$ and $\delta:=\inf\{|x-y|:x\in \text{supp}[f], y\in K^c\}>0$. On one hand, it follows from Lemma~\ref{LMB}~(1) and Minkovski's inequality that
\[
\begin{aligned}
	\left(\int_{K^c}Lf(x)^2dx\right)^{1/2} &\leq \left(\int_{K^c} \left(\int_{\text{supp}[f]} \frac{f(y)\psi_\gamma(y)}{|x-y|^{d+\alpha}}dy\right)^2dx\right)^{1/2}\\
	&\leq \int_{\text{supp}[f]} \left(\int_{K^c}\frac{dx}{|x-y|^{2(d+\alpha)}}\right)^{1/2}  f(y)\psi_\gamma(y)dy\\
	&\leq \|\psi_\gamma\|_K\int_{\bR^d} f(y)dy\left(\int_{x: |x-y|\geq \delta} \frac{dx}{|x-y|^{2(d+\alpha)}}\right)^{1/2} <\infty,
\end{aligned}\]
where $\|\psi_\gamma\|_K:=\sup\{|\psi_\gamma(x)|: x\in K\}<\infty$. 
On the other hand, 
\[
\begin{aligned}
	\int_{K}&Lf(x)^2dx \\&\leq \int_{K} \left(\int_{K} \frac{(f(y)-f(x))\psi_\gamma(y)}{|x-y|^{d+\alpha}}dy+\int_{K^c}\frac{-f(x)\psi_\gamma(y)}{|x-y|^{d+\alpha}}dy\right)^2dx\\
	&\lesssim \int_K\left(\int_{K} \frac{(f(y)-f(x))\psi_\gamma(y)}{|x-y|^{d+\alpha}}dy\right)^2dx+\int_K\left(\int_{K^c}\frac{\psi_\gamma(y)dy}{|x-y|^{d+\alpha}}\right)^2f(x)^2dx \\
	&=:J_1+J_2. 
\end{aligned}
\]
To estimate $J_1$, take $h\in C_c^\infty(\bR^d\setminus \{0\})$ with $h\equiv 1$ on $K$ and set $\tilde{\psi}:=\psi_\gamma\cdot h\in C_c^\infty(\bR^d\setminus \{0\})$ due to Lemma~\ref{LMB}~(1). Then we have
\[
J_1=\int_K\left(\int_{K} \frac{(f(y)-f(x))\tilde{\psi}(y)}{|x-y|^{d+\alpha}}dy\right)^2dx.
\]
Note that
\[
\begin{aligned}
	\int_K&\left(\int_{K} \frac{(f(y)-f(x))(\tilde{\psi}(y)-\tilde{\psi}(x))}{|x-y|^{d+\alpha}}dy\right)^2dx\\ 
	&\qquad \qquad \leq \|\nabla f\|_\infty^2 \|\nabla \tilde\psi\|_\infty^2\int_K  \left(\int_K \frac{dy}{|x-y|^{d+\alpha-2}}\right)^2dx<\infty 
\end{aligned}\]
and since $f\in \cD(\Delta^{\alpha/2})$ (see Definition~\ref{DEFA1}), 
\[
	\int_K\left(\int_{K} \frac{(f(y)-f(x))\tilde{\psi}(x)}{|x-y|^{d+\alpha}}dy\right)^2dx\leq \|\tilde{\psi}\|_K^2 \int_{K} \left(\int_K \frac{f(y)-f(x)}{|x-y|^{d+\alpha}} dy\right)^2dx<\infty. 
\]
Hence one can obtain $J_1<\infty$. 
For the second term $J_2$, it follows from $\psi_\gamma\in L^2(\bR^d)$ that
\[
	J_2\leq \|\psi_\gamma\|_{L^2(\bR^d)}^2\int_{\text{supp}[f]}f(x)^2dx\int_{y: |y-x|\geq \delta} \frac{dy}{|x-y|^{2(d+\alpha)}}<\infty. 
\]
Eventually we can conclude $Lf\in L^2(\bR^d)$. Secondly, fix $f\in C_c^\infty(\bR^d\setminus \{0\})$. For any $g\in \sF^{(\gamma)}$ or $g\in \bar{\sF}$, one can easily deduce by \eqref{EQ3LFR} and $g\cdot \psi_\gamma \in L^2(\bR^d)$ that
\[
	\sE^{(\gamma)}(f,g)=-c_{\alpha ,d}\int_{\bR^d}Lf(x) g(x)\psi_\gamma(x)dx=\left(-\frac{c_{\alpha,d}}{\psi_\gamma}Lf, g\right)_{L^2(\bR^d,\fm_\gamma)},
\]
which leads to \eqref{EQ3CCRD}.

\subsubsection{Step 3}\label{SEC313}

Define a self-adjoint operator $\cA$ on $L^2(\bR^d)$ as follows:
\begin{equation}\label{EQ3DAF}
\begin{aligned}
&\cD(\cA):= \{f\in L^2(\bR^d): f/\psi_\gamma \in \cD(\sA)\}, \\
&\cA f:= \psi_\gamma \cdot \sA\left(\frac{f}{\psi_\gamma}\right)+\lambda_\gamma f,\quad f\in \cD(\cA). 
\end{aligned}
\end{equation}
Clearly, $\cA$ is a self-adjoint operator on $L^2(\bR^d)$. In this step, we assert $\cA$ is an extension of $\Delta^{\alpha/2}$ restricted to $C_c^\infty(\bR^d\setminus \{0\})$, i.e. 
\begin{equation}\label{EQ3CCR}
	C_c^\infty(\bR^d\setminus \{0\})\subset \cD(\cA),\quad \cA f=\Delta^{\alpha/2}f,\quad \forall f\in C_c^\infty(\bR^d\setminus \{0\}),
\end{equation} 
and
\begin{equation}\label{EQ3PGD}
\psi_\gamma\in \cD(\cA),\quad 	\cA \psi_\gamma=\lambda_\gamma \psi_\gamma. 
\end{equation}
Indeed, $C_c^\infty(\bR^d\setminus \{0\})\subset \cD(\cA)$ is clear by \eqref{EQ3CCRD} and Lemma~\ref{LMB}~(1). Fix $f\in C_c^\infty(\bR^d\setminus \{0\})$ and take arbitrary $g\in C_c^\infty(\bR^d)$. A straightforward computation yields
\begin{equation}\label{EQ3FXG}
\begin{aligned}
\bigg( \frac{f(x)}{\psi_\gamma(x)}&- \frac{f(y)}{\psi_\gamma(y)}\bigg)(g(x)-g(y))\psi_\gamma(x)\psi_\gamma(y)
\\&=(f(x)-f(y))\cdot (g(x)\psi_\gamma(x))-(f(x)-f(y))\cdot (g(y)\psi_\gamma(y)) \\
 &\quad +(f(x)g(x)-f(y)g(y))\cdot \psi_\gamma(y)-(f(x)g(x)-f(y)g(y))\cdot \psi_\gamma(x).
\end{aligned}
\end{equation}
Recall that $L_I$ and $L_S$ are two equivalent expressions of $\Delta^{\alpha/2}$ as shown in Definition~\ref{DEFA1}. Since $f\in H^\alpha(\bR^d)=\cD(\Delta^{\alpha/2})$ and $g\cdot \psi_\gamma\in L^2(\bR^d)$,  we have 
\[
c_{\alpha,d}\int_{\bR^{d}}g(x)\psi_\gamma(x)dx\left(\text{p.v.}\int_{\bR^d}\frac{f(x)-f(y)}{|x-y|^{d+\alpha}}dy\right)=-\int_{\bR^d}L_If(x)g(x)\psi_\gamma(x)dx
\]
and 
\[
	c_{\alpha, d}\int_{\bR^{d}}g(y)\psi_\gamma(y)dy\left(\text{p.v.}\int_{\bR^d}\frac{f(x)-f(y)}{|x-y|^{d+\alpha}}dx\right)=\int_{\bR^d}L_If(x)g(x)\psi_\gamma(x)dx.
\]
Since $f\cdot g\in H^\alpha(\bR^d)$ and $\psi_\gamma \in L^2(\bR^d)$, it holds
\begin{equation}\label{EQ3CAD}
\begin{aligned}
c_{\alpha,d}\int_{\bR^d}\psi_\gamma(y)dy& \left(\text{p.v.}\int_{\bR^d}\frac{f(x)g(x)-f(y)g(y)}{|x-y|^{d+\alpha}}dx\right)\\
&=(\psi_\gamma, L_I(fg))_{L^2(\bR^d)}=(\psi_\gamma, L_S(fg))_{L^2(\bR^d)} \\
&=\lim_{t\downarrow 0}\left(\frac{1}{t}(p_t\ast \psi_\gamma -\psi_\gamma), fg  \right)_{L^2(\bR^d)}.
\end{aligned}\end{equation}
By virtue of $f\in C_c^\infty(\bR^d\setminus \{0\})$ and Lemma~\ref{LMB}~(3), the last term is equal to $\lambda_\gamma\int_{\bR^d} f(x)g(x)\psi_\gamma(x)dx$. 
Analogically,
\[
c_{\alpha,d}\int_{\bR^d}\psi_\gamma(x)dx\left(\text{p.v.}\int_{\bR^d}\frac{f(x)g(x)-f(y)g(y)}{|x-y|^{d+\alpha}}dy\right)=-\lambda_\gamma \int_{\bR^d}f(x)g(x)\psi_\gamma(x)dx. 
\]
Hence \eqref{EQ3FXG} tells us 
\begin{equation}\label{EQ3EGF}
	\sE^{(\gamma)}(f/\psi_\gamma, g)=-\int_{\bR^d} L_If(x)g(x)\psi_\gamma(x)dx+\lambda_\gamma \int_{\bR^d}f(x)g(x)\psi_\gamma(x)dx. 
\end{equation}
Since $f/\psi_\gamma \in \cD(\sA)$, we can obtain
\[
	\sA\left(f/\psi_\gamma\right)=(L_If)/\psi_\gamma-\lambda_\gamma (f/\psi_\gamma). 
\]
From the definition of $\cA$, \eqref{EQ3CCR} can be eventually concluded. 
On the other hand, one can easily find that $1\in \sF^{(\gamma)}$ and $\sE^{(\gamma)}(1,f)=0$ for all $f\in \sF^{(\gamma)}$. This implies $1\in \cD(\sA)$ and $\sA 1=0$. From the definition of $\cA$, we obtain \eqref{EQ3PGD}. Therefore, $\cA=\cA_\gamma$, i.e. the self-adjoint extension with parameter $\gamma$ of $\Delta^{\alpha/2}$ restricted to $C_c^\infty(\bR^d\setminus \{0\})$. Particularly, $\sA$ is identified with $\sA_\gamma$ in \eqref{EQ2DAG}.

\subsubsection{Step 4}\label{SEC314}
We can define an analogical self-adjoint operator $\bar{\cA}$ of $\cA$ on $L^2(\bR^d)$ by taking $\bar{\sA}$ in place of $\sA$ in \eqref{EQ3DAF}. Mimicking the proof of \eqref{EQ3CCR}, one can figure out that $\bar{\cA}$ is also an extension of $\Delta^{\alpha/2}$ restricted to $C_c^\infty(\bR^d\setminus \{0\})$. In addition,  take $\tau\in C_c^\infty(\bR^d)$ such that $0\leq \tau\leq 1$ and $\tau\equiv 1$ on $\{x: |x|\leq 1\}$. Set $\tau_n(x):=\tau(x/n)\in C_c^\infty(\bR^d)$. Then $\tau_n\rightarrow 1$ in $L^2(\bR^d, \fm_\gamma)$ as $n\uparrow \infty$ by the dominated convergence theorem. It follows from \eqref{EQ2UXP} that
\begin{equation}\label{EQ3EGT}
\begin{aligned}
	\sE^{(\gamma)}(\tau_n-1,\tau_n-1)&\leq \frac{c_{\alpha,d}\cdot c_{-\alpha,d}^2}{2}\iint \frac{\left(\tau(x/n)-\tau(y/n)\right)^2}{|x-y|^{d+\alpha}}|x|^{\alpha-d}|y|^{\alpha-d}dxdy \\
	&\lesssim \frac{1}{n^{d-\alpha}}\iint \frac{\left(\tau(x)-\tau(y)\right)^2}{|x-y|^{d+\alpha}}|x|^{\alpha-d}|y|^{\alpha-d}dxdy.
\end{aligned}
\end{equation} 
It is straightforward to verify that this integration is finite by mimicking the proof of (j.3) and thus $\sE^{(\gamma)}(\tau_n-1,\tau_n-1)\rightarrow 0$ as $n\rightarrow \infty$. Particularly, we can conclude $1\in \bar{\sF}$ and clearly, $\sE^{(\gamma)}(1,f)=0$ for all $f\in \bar{\sF}$. This indicates $1\in \cD(\bar{\sA})$ and $\bar{\sA}1=0$. From the definition of $\bar{\cA}$, we also have $\psi_\gamma\in \cD(\bar{\cA})$ and $\bar{\cA}\psi_\gamma=\lambda_\gamma \psi_\gamma$. Therefore, $\bar{\cA}=\cA_\gamma=\cA$ and hence $\bar{\sA}=\sA_\gamma=\sA$, which implies $\bar{\sF}=\sF^{(\gamma)}$. In other words, $(\sE^{(\gamma)},\sF^{(\gamma)})$ is regular on $L^2(\bR^d,\fm_\gamma)$ with a core $C_c^\infty(\bR^d)$. 

\subsubsection{Step 5}\label{SEC315}
Finally, we show $X^{(\gamma)}$ is associated with $(\sE^{(\gamma)},\sF^{(\gamma)})$. 
Note that the transition density of $X^{(\gamma)}$ with respect to the Lebesgue measure is (see \cite[(30)]{bib5})
\[
	q_\gamma(t,x,y)=\frac{\mathrm{e}^{-\lambda_\gamma t}p_\gamma(t,x,y)\psi_\gamma(y)}{\psi_\gamma(x)}, 
\]
where $p_\gamma$ is in \eqref{EQ1ZTX}. Clearly, its semigroup $Q^{(\gamma)}_t f:=\int_{\bR^d}q_\gamma(t,\cdot, y)f(y)dy$ is symmetric with respect to $\fm_\gamma$, i.e. $\int Q^{(\gamma)}_t f(x)g(x)\fm_\gamma(dx)=\int f(x) Q^{(\gamma)}_tg(x)\fm_\gamma(dx)$ for all $t\geq 0$ and suitable functions $f,g$. On the other hand, from \eqref{EQ2DAG} one can obtain that the semigroup of $(\sE^{(\gamma)},\sF^{(\gamma)})$ is
\[
	Q_t f=\mathrm{e}^{t\sA_\gamma}f=\frac{\mathrm{e}^{-\lambda_\gamma t}}{\psi_\gamma} \mathrm{e}^{t\cA_\gamma}(f\psi_\gamma)=\frac{\mathrm{e}^{-\lambda_\gamma t}}{\psi_\gamma}\int p_\gamma(t,\cdot, y)f(y)\psi_\gamma(y)dy
\]
for all $f\in L^2(\bR^d, \fm_\gamma)$. Hence $Q_t$ is identified with $Q^{(\gamma)}_t$ by a standard argument. That completes the proof. 

\subsection{Resolvent}
As shown in \S\ref{SEC315}, the semigroup of $(\sE^{(\gamma)},\sF^{(\gamma)})$ is 
\begin{equation}\label{EQ3QTG}
	Q_t^{(\gamma)} f=\frac{\mathrm{e}^{-\lambda_\gamma t}}{\psi_\gamma}P_t^{(\gamma)}(f\cdot \psi_\gamma),\quad  f\in L^2(\bR^d, \fm_\gamma), t\geq 0,
\end{equation}
where $P^{(\gamma)}_t$ is the semigroup associated with $\cA_\gamma$. Then its resolvent $R^{(\gamma)}_\lambda$ is
\[
	R^{(\gamma)}_\lambda f=\frac{1}{\psi_\gamma}U^{(\gamma)}_{\lambda_\gamma+\lambda}(f\cdot \psi_\gamma) \quad  f\in L^2(\bR^d, \fm_\gamma), \lambda> 0,
\]
where $U^{(\gamma)}$ is the resolvent of $\cA_\gamma$. From \cite[(20)]{bib5}, we conclude the following.

\begin{corollary}\label{COR31}
Let $U_\lambda$ be the resolvent of isotropic $\alpha$-stable process, i.e. for $\lambda>0$, $U_\lambda f= u_\lambda \ast f$ for $f\in L^2(\bR^d)$. Then the resolvent $R^{(\gamma)}$ associated with $(\sE^{(\gamma)},\sF^{(\gamma)})$ is expressed as follows: for $\lambda>0$ and $ f\in L^2(\bR^d, \fm_\gamma)$,
\begin{equation}\label{EQ3RGL}
	R^{(\gamma)}_\lambda f=\frac{1}{\psi_\gamma}U_{\lambda+\lambda_\gamma}(f\cdot\psi_\gamma)+\left(c^{(\gamma)}_\lambda \int_{\bR^d}f(x)\psi_\gamma(x)u_{\lambda+\lambda_\gamma}(x)dx\right)\cdot \frac{u_{\lambda+\lambda_\gamma}}{\psi_\gamma},
\end{equation}
where $c^{(\gamma)}_\lambda=\frac{1}{c(\alpha,d)(\lambda+\lambda_\gamma)^{\frac{d}{\alpha}-1}-\gamma}$ is a positive constant and $c(\alpha,d)$ is the constant in \eqref{EQ2LGG}. 
\end{corollary}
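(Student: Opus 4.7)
\medskip

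\noindent\textbf{Proof proposal.} The plan is to derive \eqref{EQ3RGL} directly from the semigroup identity \eqref{EQ3QTG} together with the already-available explicit formula for the resolvent of $\cA_\gamma$ (this is the content of \cite[(20)]{bib5}), so essentially nothing new has to be proved; the statement is a repackaging.

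First, I would take the Laplace transform in $t$ of \eqref{EQ3QTG}. Because the multiplication operator $f\mapsto f\cdot\psi_\gamma$ and the division $g\mapsto g/\psi_\gamma$ are bounded between the appropriate $L^2$-spaces and commute with the integral in $t$, one obtains, for any $\lambda>0$ and $f\in L^2(\bR^d,\fm_\gamma)$,
\begin{equation*}
R^{(\gamma)}_\lambda f \;=\; \int_0^\infty e^{-\lambda t}Q^{(\gamma)}_t f\,dt \;=\; \frac{1}{\psi_\gamma}\int_0^\infty e^{-(\lambda+\lambda_\gamma)t}P^{(\gamma)}_t(f\psi_\gamma)\,dt \;=\; \frac{1}{\psi_\gamma}\,U^{(\gamma)}_{\lambda+\lambda_\gamma}(f\psi_\gamma),
\end{equation*}
where $U^{(\gamma)}_\mu$ denotes the resolvent of $\cA_\gamma$. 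This reduces the corollary to substituting the explicit expression of $U^{(\gamma)}_\mu$.

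Next, I would invoke \cite[(20)]{bib5}, which expresses $U^{(\gamma)}_\mu$ as a rank-one perturbation of the free resolvent $U_\mu$: namely, for $\mu>0$ and $g\in L^2(\bR^d)$,
\begin{equation*}
U^{(\gamma)}_\mu g \;=\; U_\mu g \;+\; \frac{1}{c(\alpha,d)\mu^{d/\alpha-1}-\gamma}\,\Bigl(\int_{\bR^d}u_\mu(y)g(y)\,dy\Bigr)\,u_\mu.
\end{equation*}
Substituting $\mu=\lambda+\lambda_\gamma$ and $g=f\psi_\gamma$ and dividing through by $\psi_\gamma$ produces precisely the right-hand side of \eqref{EQ3RGL} with $c^{(\gamma)}_\lambda=1/\bigl(c(\alpha,d)(\lambda+\lambda_\gamma)^{d/\alpha-1}-\gamma\bigr)$.

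The only small point that deserves verification is that $c^{(\gamma)}_\lambda$ is indeed a positive constant, which I would settle by a direct calculation using \eqref{EQ2LGG}. Rewriting $\gamma=c(\alpha,d)\,\lambda_\gamma^{d/\alpha-1}$, one has
\begin{equation*}
c(\alpha,d)(\lambda+\lambda_\gamma)^{d/\alpha-1}-\gamma \;=\; c(\alpha,d)\bigl[(\lambda+\lambda_\gamma)^{d/\alpha-1}-\lambda_\gamma^{d/\alpha-1}\bigr],
\end{equation*}
and since $d/\alpha-1>0$ (because $\alpha<d$) and $\lambda>0$, the bracket is strictly positive, which also confirms that the singularity of $U^{(\gamma)}_\mu$ at $\mu=\lambda_\gamma$ does not appear for $\lambda>0$. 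I do not foresee any genuine obstacle; the argument is a one-line Laplace transform plus a direct substitution, with the positivity check being the only calculation that needs to be spelled out.
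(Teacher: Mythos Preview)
Your proposal is correct and follows exactly the paper's own argument: take the Laplace transform of the semigroup identity \eqref{EQ3QTG} to obtain $R^{(\gamma)}_\lambda f=\psi_\gamma^{-1}U^{(\gamma)}_{\lambda+\lambda_\gamma}(f\psi_\gamma)$, then substitute the rank-one perturbation formula for $U^{(\gamma)}_\mu$ from \cite[(20)]{bib5}. Your additional verification that $c^{(\gamma)}_\lambda>0$ via $\gamma=c(\alpha,d)\lambda_\gamma^{d/\alpha-1}$ is a nice touch that the paper leaves implicit.
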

\begin{remark}
For the critical case $\gamma=0$, the analogical expression of the resolvent is still available, see \S\ref{SEC415}.
\end{remark}

\subsection{Global properties}

In this short subsection, we illustrate that $X^{(\gamma)}$ is an irreducible and recurrent (hence also conservative) Markov process by virtue of Theorem~\ref{THM21}.   Meanwhile, it is ergodic as explained in \eqref{EQ2TTP}.

\begin{proposition}\label{THM45}
The Dirichlet form $(\sE^{(\gamma)}, \sF^{(\gamma)})$ is irreducible and recurrent.
\end{proposition}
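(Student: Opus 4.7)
The plan is to verify both properties directly from the explicit expression \eqref{EQ2FGF} of $(\sE^{(\gamma)},\sF^{(\gamma)})$, leveraging two structural features: (a) $\psi_\gamma > 0$ everywhere and $\fm_\gamma$ is a \emph{finite} measure (since $\psi_\gamma=u_{\lambda_\gamma}\in L^2(\bR^d)$); (b) the jump kernel $|x-y|^{-(d+\alpha)}\psi_\gamma(x)\psi_\gamma(y)$ is strictly positive off the diagonal.

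\textbf{Recurrence.} I would invoke the standard criterion (\cite{bib11}, Theorem~1.6.3): a Dirichlet form is recurrent if there exists a sequence $u_n\in\sF^{(\gamma)}$ with $0\le u_n\le 1$, $u_n\uparrow 1$ $\fm_\gamma$-a.e., and $\sE^{(\gamma)}(u_n,u_n)\to 0$. Actually this is already essentially done inside the proof of Theorem~\ref{THM21}: in Step~4 the cut-off functions $\tau_n(x)=\tau(x/n)$ were shown to satisfy $\tau_n\to 1$ in $L^2(\bR^d,\fm_\gamma)$ and $\sE^{(\gamma)}(\tau_n-1,\tau_n-1)\to 0$ via the bound \eqref{EQ3EGT}. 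Consequently $1\in\sF^{(\gamma)}$ with $\sE^{(\gamma)}(1,1)=0$, which immediately yields recurrence (and in particular conservativeness).

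\textbf{Irreducibility.} I would argue by contradiction. Suppose $A\in\mathscr B(\bR^d)$ is a $Q^{(\gamma)}_t$-invariant set with $0<\fm_\gamma(A)<\fm_\gamma(\bR^d)$. By the general theory of invariant sets for symmetric Dirichlet forms (e.g.\ \cite{bib11}, Theorem~1.6.1 and the discussion of invariant sets there), invariance combined with $1\in\sF^{(\gamma)}$ forces $1_A\in\sF^{(\gamma)}$ and
\[
\sE^{(\gamma)}(1_A,1_A)=0.
\]
Plugging $f=1_A$ into \eqref{EQ2FGF} gives
\[
\sE^{(\gamma)}(1_A,1_A)=c_{\alpha,d}\int_{A\times A^c}\frac{\psi_\gamma(x)\psi_\gamma(y)}{|x-y|^{d+\alpha}}\,dx\,dy.
\]
Since $\psi_\gamma(x)=u_{\lambda_\gamma}(x)>0$ for every $x\in\bR^d$ (a standard property of the resolvent kernel, recorded in Lemma~\ref{LMB}) and $|x-y|^{-(d+\alpha)}>0$ off the diagonal, the vanishing of this integral forces the product measure of $A\times A^c$ (with density $\psi_\gamma\otimes\psi_\gamma$) to be zero, which contradicts the assumption $0<\fm_\gamma(A)<\fm_\gamma(\bR^d)$. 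Hence $\fm_\gamma(A)\in\{0,\fm_\gamma(\bR^d)\}$, proving irreducibility.

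\textbf{Expected difficulty.} No heavy machinery is needed beyond what was already developed for Theorem~\ref{THM21}; the only non-trivial ingredient is the criterion identifying invariant sets with indicators of zero $\sE^{(\gamma)}$-energy, which is standard but deserves a precise citation. The strict positivity $\psi_\gamma>0$ everywhere (rather than just $\fm_\gamma$-a.e.) is what ultimately drives irreducibility and should be stated explicitly.
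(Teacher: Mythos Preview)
Your proposal is correct and follows essentially the same route as the paper. For recurrence, both you and the paper exploit $1\in\sF^{(\gamma)}$ with $\sE^{(\gamma)}(1,1)=0$ (already established in Step~4 of Theorem~\ref{THM21}); for irreducibility, the paper invokes the equivalent criterion that $\sE^{(\gamma)}(f,f)=0$ forces $f$ to be a.e.\ constant (via \cite[Theorem~2.1.10]{bib19}) rather than working with invariant sets directly, but the substance---strict positivity of the jump kernel $\psi_\gamma(x)\psi_\gamma(y)|x-y|^{-(d+\alpha)}$ off the diagonal---is identical.
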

\begin{proof}
Note that $1\in\mathscr{F}^{(\gamma)}$ and $\mathscr{E}^{(\gamma)}(1,1)=0$. Then the recurrence of $(\mathscr{E}^{(\gamma)}, \mathscr{F}^{(\gamma)})$ follows from \cite[Theorem 2.1.8]{bib19}. To show the irreducibility, take $f\in \sF^{(\gamma)}$ with $\sE^{(\gamma)}(f,f)=0$. Since $\psi_\gamma(x)>0$ for all $x\in \bR^d\setminus \{0\}$, one can easily deduce that $f$ is a.e. constant. Eventually applying \cite[Theorem 2.1.10]{bib19}, we conclude that $(\mathscr{E}^{(\gamma)}, \mathscr{F}^{(\gamma)})$ is irreducible. That completes the proof.  
\end{proof}

%\subsection{Capacity of the origin}

\section{Critical state}\label{SEC4}

Now we turn to consider the case $\gamma=0$. The first task is to prove Theorem~\ref{THM22}. 

\subsection{Proof of Theorem~\ref{THM22}}
 We will also complete this proof in several steps. 

\subsubsection{Step 1}\label{SEC411}
 Mimicking \S\ref{SEC311}, one can also demonstrate that $(\sE^{(0)},\sF^{(0)})$ is a Dirichlet form on $L^2(\bR^d,\fm_0)$. In addition, $C_c^\infty(\bR^d)\subset \sF^{(0)}$. 

\subsubsection{Step 2}\label{SEC412}
In this step, we aim to show the denseness of $C_c^\infty(\bR^d)$ in $\sF^{(0)}$ relative to the $\sE^{(0)}_1$-norm. Since its generator is expected to correspond to a self-adjoint extension of $\Delta^{\alpha/2}$ restricted to $C_c^\infty(\bR^d\setminus\{0\})$ with no eigenfunctions ($\psi_0\notin L^2(\bR^d)$), the tactic of the proof in \S\ref{SEC314} is no longer available. Instead, we will prove it by a polishing technique appeared in e.g. \cite{bib18} as follows. 

Firstly,  we show the family of all bounded functions with compact support in $\sF^{(0)}$ is $\sE^{(0)}_1$-dense in $\sF^{(0)}$. Clearly, so is the family of all bounded functions in $\sF^{(0)}$. Fix a bounded $f\in \sF^{(0)}$. Take $\tau, \tau_n$ as in \S\ref{SEC314} and set $\eta_n:=1-\tau_n$, $f_n:=f\cdot \tau_n$. Then $f_n\in \sF^{(0)}$ is bounded with compact support. It suffices to show $\sE^{(0)}_1(f-f_n,f-f_n)\rightarrow 0$ as $n\rightarrow \infty$. Indeed, $\|f-f_n\|_{L^2(\bR^d,\fm_0)}\rightarrow 0$ as $n\rightarrow \infty$ by the dominated convergence theorem. In addition, 
\[
\begin{aligned}
\sE^{(0)}(f-f_n,f-f_n)&=\frac{c_{\alpha, d}}{2}\iint \left(f(x)\eta_n(x)-f(y)\eta_n(y)\right)^2\frac{\psi_0(x)\psi_0(y)}{|x-y|^{d+\alpha}}dxdy.
\end{aligned}
\]
Note that $$\left(f(x)\eta_n(x)-f(y)\eta_n(y)\right)^2\lesssim f(x)^2\left(\tau_n(x)-\tau_n(y)\right)^2+\eta_n(y)^2\left(f(x)-f(y)\right)^2.$$
Since $f$ is bounded, it follows that  
\[
\begin{aligned}
\iint f(x)^2&\left(\tau_n(x)-\tau_n(y)\right)^2\frac{\psi_0(x)\psi_0(y)}{|x-y|^{d+\alpha}}dxdy \\
&\leq \|f\|_\infty^2\iint \left(\tau_n(x)-\tau_n(y)\right)^2\frac{\psi_0(x)\psi_0(y)}{|x-y|^{d+\alpha}}dxdy \rightarrow 0
\end{aligned}\]
by mimicking \eqref{EQ3EGT}. By the dominated convergence theorem, one can also obtain
\[
\lim_{n\rightarrow\infty}\iint \eta_n(y)^2\left(f(x)-f(y)\right)^2\frac{\psi_0(x)\psi_0(y)}{|x-y|^{d+\alpha}}dxdy=0.
\]
Hence we can conclude $\sE^{(0)}(f-f_n,f-f_n)\rightarrow 0$ as $n\rightarrow \infty$.

Secondly, fix a bounded $f\in \sF^{(0)}$ with compact support and we will show that there is a sequence $\{f_n: n\geq 1\}\subset C_c^\infty(\bR^d)$ such that $\sE^{(0)}_1(f-f_n,f-f_n)\rightarrow 0$ as $n\rightarrow \infty$. To this end, take a radially symmetric, radially decreasing function $\rho\in C_{c}^{\infty}(\mathbb{R}^d)$ such that $\rho\geq 0$, $\text{supp}[\rho]\subset \{x:|x|<1\}$ and $\int_{\mathbb{R}^d}\rho(x)dx=1$. In other words, there exists a decreasing function $\hat{\rho}$ on $[0,\infty)$ such that $\rho(x)=\hat{\rho}(|x|)$. For every $\delta>0$, define $\rho_\delta(x):=\delta^{-d}\rho(x/\delta)$ and $f_\delta(x) :=\rho_\delta\ast f(x)=\int_{\mathbb{R}^d}\rho_\delta(x-y)f(y)dy$. Since $f$ is bounded with compact support, it follows that $f_\delta\in C_c^\infty(\bR^d)$. Clearly, $f_\delta\rightarrow f$ as $\delta\downarrow 0$ in $L^2(\bR^d, \fm_0)$. So it remains to show $\mathscr{E}^{(0)}(f-f_\delta, f-f_\delta)\rightarrow 0$ as $\delta\downarrow 0$. Fix an arbitrary constant $\varepsilon>0$. Note that
\begin{equation}\label{EQ4FFX}
	F_f(x,y):=\frac{f(x)-f(y)}{|x-y|^{\frac{d+\alpha}{2}}} \in L^2\left(\bR^{2d}, \psi_0(x)\psi_0(y)dxdy\right)=:H.
\end{equation}
Since $\psi_0(x)\psi_0(y)dxdy$ is a Radon measure on $\bR^{2d}$, one can take a function $g\in C_c(\bR^{2d})$ such that $\|g-F_f\|_H<\varepsilon$. For every function $h(x,y)\in H$, define
\[
	h\star \rho_\delta(x,y):=\int_{\mathbb{R}^d}h(x-z,y-z)\rho_\delta(z)dz. 
\]
This special convolution was frequently used in \cite{bib18}. Particularly by \cite[(6.5)]{bib18}, 
\begin{equation}\label{EQ4GGS}
	\|g-g\star\rho_\delta\|_{H}\rightarrow 0,\quad \delta\rightarrow 0.
\end{equation}
In addition, let $F_{f_\delta}$ be the function defined by \eqref{EQ4FFX} with $f_\delta$ in place of $f$. Then
\[
\|g\star\rho_\delta-F_{f_\delta}\|_{H}=\|g\star\rho_\delta-F_f\star\rho_\delta\|_{H}=\|(g-F_f)\star\rho_\delta\|_{H}
\lesssim \|g-F_f\|_{H}.
\]
The last inequality is due to \cite[Proposition 4.4]{bib18}. As a result, 
\[
\begin{aligned}
\mathscr{E}^{(0)}(f-f_\delta, f-f_\delta)^{\frac{1}{2}}&=\|F_f-F_{f_\delta}\|_H \\
 &\leq \|F_f-g\|_{H}+\|g-g\star\rho_\delta\|_{H}+\|g\star\rho_\delta-F_{f_\delta}\|_{H}\\
 &\lesssim 2\varepsilon+\|g-g\star\rho_\delta\|_{H}.
\end{aligned}
\]
Therefore we can conclude $\mathscr{E}^{(0)}(f-f_\delta, f-f_\delta)\rightarrow 0$ as $\delta\downarrow 0$ by \eqref{EQ4GGS}. 

\subsubsection{Step 3} Denote the generator of $(\sE^{(0)},\sF^{(0)})$ by $\sA$. We assert
\begin{equation}\label{EQ4CCRD}
\begin{aligned}
C_c^\infty&(\bR^d\setminus \{0\})\subset \cD(\sA), \\
\sA f(x)&=\frac{c_{\alpha,d}}{\psi_0(x)} \left(\text{p.v.}\int_{\bR^d} \frac{f(y)-f(x)}{|x-y|^{d+\alpha}}\psi_0(y)dy\right),\quad f\in C_c^\infty(\bR^d\setminus \{0\}).
\end{aligned}
\end{equation}
This claim can be verified by repeating \S\ref{SEC312} with $\gamma=0$ except for the estimate of $J_2$. Instead, take a constant $r>0$ such that $r/2>\sup\{|x|:x\in K\}$ and note that for $y\notin B(r)=\{z: |z|<r\}$ and $x\in K$, $|x-y|\geq |y|-|x|\geq |y|/2$. It follows that
\[
\int_K\left(\int_{K^c\cap B(r)^c}\frac{\psi_0(y)dy}{|x-y|^{d+\alpha}}\right)^2f(x)^2dx \lesssim \int_K f(x)^2dx \left(\int_{B(r)^c}\frac{\psi_0(y)dy}{|y|^{d+\alpha}}\right)^2<\infty,
\]
since $\psi_0(y)=c_{-\alpha,d}|y|^{\alpha-d}$. 
In addition,
\[
\int_K\left(\int_{K^c\cap B(r)}\frac{\psi_0(y)dy}{|x-y|^{d+\alpha}}\right)^2f(x)^2dx \leq \frac{\left(\int_{B(r)}\psi_0(y)dy\right)^2}{\delta^{2(d+\alpha)}}\int_K f(x)^2dx<\infty,
\]
due to the definition of $\delta$ in \S\ref{SEC312}. Hence $J_2<\infty$ and \eqref{EQ4CCRD} holds. 

\subsubsection{Step 4}

Next, define a self-adjoint operator $\cA$ on $L^2(\bR^d)$:
\begin{equation}\label{EQ4DAF}
\begin{aligned}
&\cD(\cA):= \{f\in L^2(\bR^d): f/\psi_0 \in \cD(\sA)\}, \\
&\cA f:= \psi_0 \cdot \sA\left(\frac{f}{\psi_0}\right),\quad f\in \cD(\cA). 
\end{aligned}
\end{equation}
We assert
\begin{equation}\label{EQ4CCR}
	C_c^\infty(\bR^d\setminus \{0\})\subset \cD(\cA),\quad \cA f=\Delta^{\alpha/2}f,\quad \forall f\in C_c^\infty(\bR^d\setminus \{0\}).
\end{equation}
To prove it, one can repeat the procedures from \eqref{EQ3FXG} to \eqref{EQ3EGF} with $\gamma=0$. However, the argument in \eqref{EQ3CAD} should be modified as follows (since $\psi_0\notin L^2(\bR^d)$). Note that $fg\in C_c^\infty(\bR^d\setminus \{0\})$. Take $r>0$ such that $\text{supp}[fg]\subset B(r)$. Then $\psi_0\cdot 1_{B(2r)}\in L^2(\bR^d)$ and it follows that
\begin{equation}\label{EQ4PBR}
\left(\psi_0\cdot 1_{B(2r)}, L_I(fg)\right)_{L^2(\bR^d)}=\lim_{t\downarrow 0}\left(\frac{1}{t}\left(p_t\ast (\psi_0\cdot 1_{B(2r)})-\psi_0\cdot 1_{B(2r)}\right), fg\right)_{L^2(\bR^d)}. 
\end{equation}
On the other hand, fix $y\notin B(2r)$ and then $|x-y|\geq |y|-|x|\geq |y|/2$ for all $x\in \text{supp}[fg]\subset B(r)$. Hence
\[
|L_I(fg)(y)|=\left|\text{p.v.}\int_{\bR^d}\frac{f(x)g(x)}{|x-y|^{d+\alpha}}dx\right|\lesssim \|fg\|_{L^1(\bR^d)} |y|^{-d-\alpha}. 
\]
Since $\psi_0(y)|y|^{-d-\alpha} 1_{B(2r)^c}(y)$ is integrable, one can obtain by the dominated convergence theorem that
\begin{equation}\label{EQ4BRC}
	\left|\int_{B(2r)^c} \psi_0(y)L_I(fg)(y)dy\right|<\infty. 
\end{equation}
From \cite[Lemma~3.4]{bib20}, we know $L_I(fg)(y)=L_S(fg)(y)$ for all $y\notin B(2r)^c$ and (see e.g. \cite[(S)]{bib20}), 
\[
	L_S(fg)(y)=\lim_{t\downarrow 0}\int_{\bR^d} (fg)(y+z)\frac{p_t(z)}{t}dz,\quad y\notin B(2r). 
\]
Actually this limit exists. Indeed, since $(fg)(y+z)\neq 0$ leads to $|y+z|<r$, it follows that $|z|\geq |y|-|y+z|>r$. Hence by \eqref{EQAPTX} and \eqref{EQAPCR}, it holds for $t<r^\alpha$, 
\[
\frac{p_t(z)}{t}=t^{-\frac{d+\alpha}{\alpha}}p_1\left(\frac{z}{t^{1/\alpha}}\right)\lesssim |z|^{-d-\alpha},
\]
which is integrable on $B(r)^c$. Mimicking \eqref{EQ4BRC}, it is straightforward to verify $$\int_{B(2r)^c}\psi_0(y)\int_{\bR^d}|(fg)(y+z)\|z|^{-d-\alpha}dz<\infty. $$ 
Then by the dominated convergence theorem and Fubini's theorem, one can obtain
\[
\begin{aligned}
	\int_{B(2r)^c} \psi_0(y)L_I(fg)(y)dy&=\int_{B(2r)^c} \psi_0(y)dy \left(\lim_{t\downarrow 0}\int_{\bR^d} (fg)(y+z)\frac{p_t(z)}{t}dz\right) \\\
	&=\lim_{t\downarrow 0}\int_{B(2r)^c} \psi_0(y)dy\int_{\bR^d} (fg)(y+z)\frac{p_t(z)}{t}dz \\
	&=\lim_{t\downarrow 0}\frac{1}{t}\int_{\bR^d}\left(p_t\ast (\psi_0\cdot 1_{B(2r)^c})\right)(z) (fg)(z)dz. 
\end{aligned}\]
From \eqref{EQ4PBR} and Lemma~\ref{LMB}~(3), we eventually conclude 
\[
	\int_{\bR^d}\psi_0(y)L_I(fg)(y)dy=\lim_{t\downarrow 0}\int_{\bR^d} \frac{1}{t}(p_t\ast\psi_0(y)-\psi_0(y))f(y)g(y)dy=0. 
\]

\subsubsection{Step 5}\label{SEC415}

Finally, it remains to prove $\cA$ given by \eqref{EQ4DAF} is exactly $\cA_0$, which leads to $\sA=\sA_0$. By the expression of the resolvent of $\cA_0$ (see e.g. \cite[(20)]{bib5}), it suffices to show the resolvent $R_\lambda$ associated with $(\sE^{(0)},\sF^{(0)})$ is identified with
\begin{equation}\label{EQ4RLF}
	R^{(0)}_\lambda f=\frac{1}{\psi_0}U_{\lambda}(f\cdot\psi_0)+\left(c^{(0)}_\lambda \int_{\bR^d}f(x)\psi_0(x)u_{\lambda}(x)dx\right)\cdot \frac{u_{\lambda}}{\psi_0}, \quad f\in L^2(\bR^d,\fm_0),
\end{equation}
where $c^{(0)}_\lambda=\frac{1}{c(\alpha,d)\lambda^{\frac{d}{\alpha}-1}}$. To this end, we apply a later result stated in Theorem~\ref{THM63}, i.e. take a sequence $\gamma_n\downarrow 0$ and then $(\sE^{(\gamma_n)},\sF^{(\gamma_n)})$ is convergent to $(\sE^{(0)},\sF^{(0)})$ in the sense of Mosco. The proof of it only relies on the expression of $(\sE^{(0)},\sF^{(0)})$ as we have proved in \S\ref{SEC411} and \S\ref{SEC412}. Recall that $R^{(\gamma_n)}$ denotes the resolvent of $(\sE^{(\gamma_n)},\sF^{(\gamma_n)})$ and is expressed in Corollary~\ref{COR31}. Particularly, Mosco convergence implies $R^{(\gamma_n)}_\lambda$ strongly converges to $R_\lambda$ in the sense of Definition~\ref{DEFB3}. By Lemma~\ref{LMB4}~(4), this  leads to 
\[
	\psi_{\gamma_n}\cdot \left(R^{(\gamma_n)}_\lambda f\right)\rightarrow \psi_0\cdot (R_\lambda f),\quad \text{ in }L^2(\bR^d) \text{ as } n\rightarrow \infty
\]
for all $f\in C_c^\infty(\bR^d)$. From \eqref{EQ3RGL}, one can easily find that $\psi_{\gamma_n}\cdot \left(R^{(\gamma_n)}_\lambda f\right)$ converges in $L^2(\bR^d)$ to 
\[
	U_\lambda(f\cdot \psi_0)+\left(c^{(0)}_\lambda \int_{\bR^d}f(x)\psi_0(x)u_{\lambda}(x)dx\right)\cdot u_{\lambda}=\psi_0\cdot (R^{(0)}_\lambda f). 
\]
Therefore $R_\lambda f=R^{(0)}_\lambda f$ for all $f\in C_c^\infty(\bR^d)$. By a standard argument, we can conclude $R_\lambda$ and $R^{(0)}_\lambda$ are identified. That completes the proof. 

\subsection{Global properties}

In this short subsection, we illustrate that $X^{(0)}$ is also irreducible and recurrent. 

\begin{proposition}\label{PRO41}
The Dirichlet form $(\sE^{(0)},\sF^{(0)})$ is irreducible and recurrent.
\end{proposition}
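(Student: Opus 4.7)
The plan is to follow the same template as Proposition~\ref{THM45}, but to handle recurrence differently because $\fm_0$ has infinite total mass, so $1\notin L^2(\bR^d,\fm_0)$ and in particular $1\notin\sF^{(0)}$. Consequently the easy argument ``$\sE^{(0)}(1,1)=0$'' is unavailable, and I need to exhibit an approximating sequence in $\sF^{(0)}$ with vanishing energy.

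For recurrence, I would take the cutoff $\tau\in C_c^\infty(\bR^d)$, $0\leq\tau\leq 1$, $\tau\equiv 1$ on $B(1)$, already used in \S\ref{SEC314}, and set $\tau_n(x):=\tau(x/n)\in C_c^\infty(\bR^d)\subset\sF^{(0)}$. Clearly $0\leq\tau_n\leq 1$ and $\tau_n\uparrow 1$ pointwise. Since $\psi_0(x)=c_{-\alpha,d}|x|^{\alpha-d}$ is exactly homogeneous, the substitution $x=nu$, $y=nv$ in the explicit formula \eqref{EQ2FGF} (with $\gamma=0$) yields the clean identity
\[
\sE^{(0)}(\tau_n,\tau_n)=\tfrac{1}{2}c_{\alpha,d}\,c_{-\alpha,d}^2\cdot n^{\alpha-d}\iint_{\bR^{2d}\setminus D}\frac{(\tau(u)-\tau(v))^2}{|u-v|^{d+\alpha}}|u|^{\alpha-d}|v|^{\alpha-d}\,du\,dv,
\]
since the exponent of $n$ adds to $2d-(d+\alpha)+2(\alpha-d)=\alpha-d$. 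The double integral is finite by the same case analysis used to verify (j.3) in \S\ref{SEC311} (splitting into a diagonal piece handled by $\tau\in C^\infty$, a piece near the origin controlled by $\alpha>d/2$, and a far piece controlled by $\alpha<d$). Because $\alpha-d<0$, this gives $\sE^{(0)}(\tau_n,\tau_n)\to 0$. Appealing to the well-known approximating-sequence characterization of recurrence for symmetric Dirichlet forms, stated as \cite[Theorem~1.6.3]{bib11} (or equivalently \cite[Theorem~2.1.8]{bib19}), this forces $(\sE^{(0)},\sF^{(0)})$ to be recurrent.

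Irreducibility is then proved exactly as in Proposition~\ref{THM45}: if $f\in\sF^{(0)}$ satisfies $\sE^{(0)}(f,f)=0$, then the integrand of $\sE^{(0)}$ vanishes $dx\otimes dy$-a.e., and since $\psi_0(x)\psi_0(y)>0$ off the $\fm_0$-null set $\{x=0\}\cup\{y=0\}$, we get $f(x)=f(y)$ a.e., so $f$ is a.e.\ constant; \cite[Theorem~2.1.10]{bib19} then yields irreducibility. The only step that is not a direct transcription of the globular-case proof is the scaling identity above, and the only substantive input is the strict inequality $\alpha<d$, which makes the scaling exponent negative.
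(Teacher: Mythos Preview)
Your recurrence argument is correct and is exactly the paper's: the same cutoffs $\tau_n$, the same scaling (which is what ``mimicking \eqref{EQ3EGT}'' means), and the same appeal to \cite[Theorem~2.1.8]{bib19}.

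The irreducibility argument, however, has a genuine gap. You check only that $f\in\sF^{(0)}$ with $\sE^{(0)}(f,f)=0$ forces $f$ to be a.e.\ constant, and then invoke \cite[Theorem~2.1.10]{bib19}. But $\fm_0(\bR^d)=\infty$, so nonzero constants are \emph{not} in $\sF^{(0)}$; your condition therefore reduces to ``$f\in\sF^{(0)}$, $\sE^{(0)}(f,f)=0$ $\Rightarrow$ $f=0$'', and that is \emph{not} sufficient for irreducibility. A clean counterexample: take two disjoint copies of one-dimensional Brownian motion. The resulting Dirichlet form is recurrent and reducible, yet any $f\in\sF$ with zero energy is constant on each copy and hence, being in $L^2$ of an infinite measure, identically zero. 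The criterion you cite (as used in Proposition~\ref{THM45}) works in the globular case precisely because $\fm_\gamma$ is finite and $1\in\sF^{(\gamma)}$; that hypothesis fails here.

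The paper handles this by passing to the extended Dirichlet space: it takes a sequence $f_n\in\sF^{(0)}$ with $\sE^{(0)}(f_n,f_n)\to 0$ and $f_n\to f$ a.e., extracts a subsequence along which $F_{f_{n_k}}\to 0$ $dxdy$-a.e., and concludes that the pointwise limit $f$ is a.e.\ constant; it then cites \cite[Theorem~5.2.16]{bib19} (not 2.1.10). The fix to your argument is exactly this upgrade: your pointwise reasoning about the integrand survives a.e.\ limits via a subsequence, so you just need to apply it in $\sF^{(0)}_e$ rather than in $\sF^{(0)}$.
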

\begin{proof}
Take $\tau, \tau_n$ as in \S\ref{SEC314}. Note that $\tau_n\uparrow 1$ and $\sE^{(0)}(\tau_n,\tau_n)\rightarrow 0$ by mimicking \eqref{EQ3EGT}. Then it follows from \cite[Theorem 2.1.8]{bib19} that $(\sE^{(0)},\sF^{(0)})$ is recurrent. 

To show the irreducibility, suppose $f_n\in \sF^{(0)}$ such that $\lim_{n\rightarrow \infty}\sE^{(0)}(f_n,f_n)=0$ and $f(x):=\lim_{n\rightarrow \infty}f_n(x)$ exists for a.e. $x\in \bR^d$. We use the same notations as in \eqref{EQ4FFX}. Then $F_{f_n}\in H$ and $\|F_{f_n}\|_H\rightarrow 0$. This leads to $F_{f_{n_k}}\rightarrow 0$, $dxdy$-a.e. as $k\rightarrow \infty$ for a suitable subsequence $\{f_{n_k}:k\geq 1\}\subset \{f_n: n\geq 1\}$.  On the other hand, 
\[
	\lim_{k\rightarrow \infty}F_{f_{n_k}}(x,y)=\frac{f(x)-f(y)}{|x-y|^{(d+\alpha)/2}}
\]
for a.e. $(x,y)\in \bR^{2d}$. Hence we can conclude $f$ is a.e. constant. By applying \cite[Theorem 5.2.16]{bib19}, $(\mathscr{E}^{(0)}, \mathscr{F}^{(0)})$ is irreducible. That completes the proof.
\end{proof}

\section{Alternative characterization via $h$-transform}\label{SEC5}

In this section we reconsider globular states or critical state by means of so-called $h$-transform. Fix $\gamma\geq 0$. Recall that $W^\alpha$ is the isotropic $\alpha$-stable process on $\bR^d$ with $\frac{d}{2}<\alpha<d\wedge 2$. We use the notation $P_t$ to stand for the probability transition semigroup of $W^\alpha$ as well as the $L^2$-semigroup associated with \eqref{EQ2DGH} if no confusions cause. Clearly, $\psi_\gamma=u_{\lambda_\gamma}$ is $\lambda_\gamma$-excessive relative to $(P_t)$, i.e. 
\[
	\mathrm{e}^{-\lambda_\gamma t}P_t\psi_\gamma\leq \psi_\gamma, \quad \lim_{t\downarrow 0}\mathrm{e}^{-\lambda_\gamma t}P_t\psi_\gamma=\psi_\gamma.
\]
Following e.g. \cite[Chapter 11]{bib21}, one can derive a nice Markov process on $E_h:=\{x: 0<h(x)<\infty\}$ by virtue of well-known $h$-transform with $h:=\psi_\gamma$. More precisely, set
\begin{equation}\label{EQ5HPG}
	{}_hP^{(\gamma)}_t(x,dy):=\left\lbrace
\begin{aligned}
& \mathrm{e}^{-\lambda_\gamma t}\frac{\psi_\gamma(y)}{\psi_\gamma(x)}P_t(x,dy),\quad x\in E_h=\bR^d\setminus \{0\},\\
&0,\qquad \qquad \quad \quad\quad\quad\;\;\;\;\; x=0. 
\end{aligned}
\right. 
\end{equation}
Then $({}_hP^{(\gamma)}_t)$ is a sub-Markov semigroup and generates a Markov process, denoted by ${}_hW^{\alpha,(\gamma)}$, on $\bR^d\setminus \{0\}$ as shown in e.g. \cite[Theorem~11.9]{bib21}. 

To phrase the main result of this section, we prepare two notions. Let $E$ be a locally compact separable metric space and $\fm$ be a positive Radon measure on it. 
The first one is the so-called \emph{part process}; see \cite[\S4.4]{bib11}. Let $(\sE,\sF)$ be a Dirichlet form on $L^2(E,\fm)$ associated with a Markov process $X$  and $F\subset E$ be a closed set of positive capacity relative to $(\sE,\sF)$. Then the part process $X^G$ of $X$ on $G:=E\setminus F$  is obtained by killing $X$ once upon leaving $G$. In other words, 
\[
	X^G_t=\left\lbrace
	\begin{aligned}
	& X_t,\quad t<\sigma_F:=\{s>0: X_s=F\}, \\
	& \partial, \quad\;\; t\geq \sigma_F,
	\end{aligned}
	\right. 
\]
where $\partial$ is the trap of $X^G$. Note that $X^G$ is associated with the \emph{part Dirichlet form} $(\sE^G,\sF^G)$ of $(\sE,\sF)$ on $G$: 
\begin{equation}\label{EQ5FGF}
\begin{aligned}
	&\sF^G=\{f\in \sF: \tilde{f}=0,\; \sE\text{-q.e. on }F\}, \\
	&\sE^G(f,g)=\sE(f,g),\quad f,g\in \sF^G,
\end{aligned}
\end{equation}
where $\tilde{f}$ stands for the quasi-continuous version of $f$. The second is the one-point reflection of a Markov process studied in \cite{bib22}; see also \cite[\S7.5]{bib19}. Let $a\in E$ be a non-isolated point with $\fm(\{a\})=0$ and $X^0$ be an $\fm$-symmetric Borel standard process on $E_0:=E\setminus \{a\}$ with no killing inside. Then a right process $X$ on $E$ is called a \emph{one-point reflection} of $X^0$ (at $a$) if $X$ is $\fm$-symmetric and of no killing on $\{a\}$, and the part process of $X$ on $E_0$ is $X^0$. 

\begin{theorem}\label{THM44}
Fix $\gamma\geq 0$ and let $X^{(\gamma)}$ and $(\sE^{(\gamma)},\sF^{(\gamma)})$ be in Theorem~\ref{THM21} or Theorem~\ref{THM22}. Then $\{0\}$ is of positive capacity relative to $(\sE^{(\gamma)},\sF^{(\gamma)})$. Furthermore, the following hold:
\begin{itemize}
\item[(1)] ${}_hW^{\alpha,(\gamma)}$ is identified with the part process of $X^{(\gamma)}$ on $\bR^d\setminus \{0\}$; 
\item[(2)] $X^{(\gamma)}$ is the unique (in law) one-point reflection of ${}_hW^{\alpha,(\gamma)}$ at $0$. 
\end{itemize}
\end{theorem}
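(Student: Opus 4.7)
I would establish the three assertions in sequence, drawing on the explicit resolvent representation from \S\ref{SEC3}--\S\ref{SEC4} and the standard $h$-transform machinery.

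\emph{Positive capacity of $\{0\}$.} The starting point is Corollary~\ref{COR31} (and its analogue \eqref{EQ4RLF} for $\gamma=0$), which rewritten in symmetric kernel form reads
\[
R^{(\gamma)}_\lambda(x,y) = \frac{u_{\lambda+\lambda_\gamma}(x-y)}{\psi_\gamma(x)\psi_\gamma(y)} + c^{(\gamma)}_\lambda\,\frac{u_{\lambda+\lambda_\gamma}(x)\,u_{\lambda+\lambda_\gamma}(y)}{\psi_\gamma(x)\,\psi_\gamma(y)}.
\]
Since $u_{\lambda+\lambda_\gamma}$ and $\psi_\gamma$ share the same leading Riesz singularity $c_{-\alpha,d}|x|^{\alpha-d}$ at $0$ with bounded subleading corrections (extracted from \eqref{EQ2ULX} and \eqref{EQ2UXP}), one has $u_{\lambda+\lambda_\gamma}(x)/\psi_\gamma(x)\to 1$ as $x\to 0$ while the first summand vanishes there. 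Consequently $R^{(\gamma)}_\lambda(\cdot,0) = c^{(\gamma)}_\lambda u_{\lambda+\lambda_\gamma}/\psi_\gamma$ is continuous on $\bR^d$ and belongs to $L^2(\bR^d,\fm_\gamma)$ (its squared norm equals $(c^{(\gamma)}_\lambda)^2\int u_{\lambda+\lambda_\gamma}^2\,dx<\infty$ by Lemma~\ref{LMB}), serving as the $\lambda$-potential of $\delta_0$ with self-energy $R^{(\gamma)}_\lambda(0,0) = c^{(\gamma)}_\lambda < \infty$. The standard criterion \cite[Theorem~2.2.3]{bib11} then yields $\delta_0 \in S_{00}$ and hence positive capacity of $\{0\}$.

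\emph{Part (1).} The strategy is to identify two Dirichlet forms on $\bR^d\setminus\{0\}$. The part form $(\sE^{(\gamma),0},\sF^{(\gamma),0})$ of \eqref{EQ5FGF} is regular on $\bR^d\setminus\{0\}$ by \cite[Theorem~4.4.3]{bib11}, and $C_c^\infty(\bR^d\setminus\{0\})$ sits in its core. Separately, ${}_hP^{(\gamma)}_t$ is $\fm_\gamma$-symmetric by direct inspection of \eqref{EQ5HPG}, and the intertwining $Tf := \psi_\gamma f$ conjugates it with $e^{-\lambda_\gamma t}P_t$, so the Dirichlet form of ${}_hW^{\alpha,(\gamma)}$ equals $\tilde\sE(f,f) := \sG(\psi_\gamma f,\psi_\gamma f) + \lambda_\gamma\|\psi_\gamma f\|_{L^2(dx)}^2$. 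For $f\in C_c^\infty(\bR^d\setminus\{0\})$, I would expand
\[
(\psi_\gamma(x)f(x)-\psi_\gamma(y)f(y))^2 = \psi_\gamma(x)\psi_\gamma(y)(f(x)-f(y))^2 + (\psi_\gamma(x)-\psi_\gamma(y))(\psi_\gamma(x)f(x)^2-\psi_\gamma(y)f(y)^2)
\]
and collapse the cross term via the distributional identity $\Delta^{\alpha/2}\psi_\gamma = \lambda_\gamma\psi_\gamma$ on $\bR^d\setminus\{0\}$ (obtained from $(\lambda_\gamma-\Delta^{\alpha/2})u_{\lambda_\gamma} = \delta_0$; for $\gamma=0$ it reduces to $\Delta^{\alpha/2}\psi_0 = 0$ off the origin), which produces exactly $-\lambda_\gamma\|f\|^2_{L^2(\fm_\gamma)}$. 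The outcome is $\tilde\sE(f,f) = \sE^{(\gamma)}(f,f)$ on the common core $C_c^\infty(\bR^d\setminus\{0\})$, and regularity of both forms forces them to coincide, identifying ${}_hW^{\alpha,(\gamma)}$ in law with the part process.

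\emph{Part (2).} Reading \eqref{EQ2FGF} as a Beurling-Deny decomposition, $\sE^{(\gamma)}$ is pure-jump with jumping measure $c_{\alpha,d}|x-y|^{-(d+\alpha)}\psi_\gamma(x)\psi_\gamma(y)\,dxdy$ and vanishing killing measure, so $X^{(\gamma)}$ has no killing on $\{0\}$. Combined with (1) this realises $X^{(\gamma)}$ as an $\fm_\gamma$-symmetric right process on $\bR^d$ whose part on $\bR^d\setminus\{0\}$ is ${}_hW^{\alpha,(\gamma)}$ and which is not killed at $0$, i.e. a one-point reflection at $0$. Uniqueness in law is then the content of the Chen-Fukushima theorem \cite{bib22} (cf. \cite[\S7.5]{bib19}); its remaining hypotheses ($0$ non-isolated, $\fm_\gamma(\{0\})=0$, and ${}_hW^{\alpha,(\gamma)}$ symmetric standard on $\bR^d\setminus\{0\}$ without interior killing) are immediate from our setup.

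\emph{Main obstacle.} The technical heart is the identity $\tilde\sE(f,f) = \sE^{(\gamma)}(f,f)$ in Part (1): the jumping kernel couples values across the singular point via an integrand doubly singular on the diagonal and at the origin, so rigorously evaluating the cross term against $\Delta^{\alpha/2}\psi_\gamma = \lambda_\gamma\psi_\gamma$ requires a careful near-origin cutoff in the spirit of Steps 3--5 of Section~\ref{SEC3}. The critical case is still more subtle because $\psi_0\notin L^2(\bR^d)$ forces all $L^2$-pairings to be localised on supports bounded away from $0$, as in the passage around \eqref{EQ4PBR}.
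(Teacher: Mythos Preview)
Your approach to the positive-capacity claim is genuinely different from the paper's and arguably more direct. The paper argues by contradiction: assuming $\text{Cap}^{(\gamma)}(\{0\})=0$ forces $C_c^\infty(\bR^d\setminus\{0\})$ to be a core of $(\sE^{(\gamma)},\sF^{(\gamma)})$, whence the identity $\sE^{(\gamma)}(f/\psi_\gamma,g/\psi_\gamma)=\sG_{\lambda_\gamma}(f,g)$ (already proved in \S\ref{SEC313}) makes the operator $\cA$ of \eqref{EQ3DAF} equal to $\Delta^{\alpha/2}$, contradicting $\cA=\cA_\gamma\neq\Delta^{\alpha/2}$. Your route---reading off a bounded, continuous $\lambda$-potential of $\delta_0$ from the explicit resolvent kernel and invoking the finite-energy criterion---avoids the contradiction and yields the capacity directly; it also produces the $1$-equilibrium potential $u_{\lambda+\lambda_\gamma}/\psi_\gamma$ (later identified in Corollary~\ref{COR53}(2)) as a by-product.

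For Part~(1) you are doing essentially what the paper does. The paper writes the Dirichlet form of ${}_hW^{\alpha,(\gamma)}$ as $\sG_{\lambda_\gamma}(f\psi_\gamma,f\psi_\gamma)$ and then matches it with $\sE^{(\gamma)}(f,f)$ on $C_c^\infty(\bR^d\setminus\{0\})$ by reusing the calculation \eqref{EQ3FXG}--\eqref{EQ3EGF} already carried out in Step~3 of \S\ref{SEC3}. Your algebraic expansion is the same identity in a symmetrised form, and the ``main obstacle'' you flag is exactly the justification already supplied there (and in \S\ref{SEC4} for $\gamma=0$); you should simply cite those steps rather than redo them.

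There is, however, a real gap in your Part~(2). Uniqueness of the symmetric one-point reflection in \cite{bib22} (equivalently \cite[Theorem~7.5.4]{bib19}) is \emph{not} automatic from the structural hypotheses you list; it requires in addition that the killed process approaches $0$ at its lifetime almost surely, i.e.\ ${}_h\bfP_\gamma^x(\zeta_h<\infty,\ {}_hW^{\alpha,(\gamma)}_{\zeta_h-}=0)=1$ for q.e.\ $x$. The paper obtains this from the irreducibility and recurrence of $X^{(\gamma)}$ (Propositions~\ref{THM45} and \ref{PRO41}), which give $\bfP^x_\gamma(\sigma_0<\infty)=1$ via \cite[Theorem~4.7.1]{bib11}, together with conservativeness and no killing inside. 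Without this step there can in principle be several symmetric extensions (e.g.\ one that is killed upon first hitting~$0$), so you must invoke recurrence explicitly.
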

\begin{proof}
Denote the 1-capacity relative to $(\sE^{(\gamma)},\sF^{(\gamma)})$ by $\text{Cap}^{(\gamma)}$ (see \cite[\S2.1]{bib11}). Since $\psi_\gamma\leq \psi_0$ for $\gamma>0$, it follows from the definition of $1$-capacities that $\text{Cap}^{(\gamma)}(A)\leq\text{Cap}^{(0)}(A)$ for any Borel set $A\subset\mathbb{R}^d$. Hence we only need to show $\text{Cap}^{(\gamma)}(\{0\})>0$ for $\gamma>0$. Argue with contradiction and suppose $\text{Cap}^{(\gamma)}(\{0\})=0$ for some $\gamma>0$. Then the part process of $X^{(\gamma)}$ on $\bR^d\setminus\{0\}$ coincides with $X^{(\gamma)}$ and particularly, it follows from \cite[Theorem~4.4.3]{bib11} that $C_c^\infty(\bR^d\setminus \{0\})$ is also a core of $(\sE^{(\gamma)},\sF^{(\gamma)})$. By \eqref{EQ3EGF}, one can easily obtain that for any $f,g\in C_c^\infty(\bR^d\setminus\{0\})$, 
\begin{equation}\label{EQ5EGF}
\sE^{(\gamma)}\left(\frac{f}{\psi_\gamma},\frac{g}{\psi_\gamma}\right)=\sG_{\lambda_\gamma}(f,g). 
\end{equation}
Note that $C_c^\infty(\bR^d\setminus \{0\})$ is a core of $(\sG, \cD(\sG))$ due to $\alpha<d$. This implies that
\[
	f\mapsto \frac{f}{\psi_\gamma}
\]
is an isomorphism between $\cD(\sG)$ with the norm $\|\cdot\|_{\sG_{\lambda_\gamma+1}}$ and $\sF^{(\gamma)}$ with the norm $\|\cdot\|_{\sE^{(\gamma)}_1}$. Particularly, the operator $\cA$ defined by \eqref{EQ3DAF} must be identified with $\Delta^{\alpha/2}$. This leads to contradiction, because we have shown $\cA=\cA_\gamma\neq \Delta^{\alpha/2}$ in \S\ref{SEC313}. 

To prove the first assertion, it is straightforward to verify that $({}_hP^{(\gamma)}_t)$ is symmetric with respect to $\fm_\gamma(dx)=\psi_\gamma(x)^2dx$ and then associated with the Dirichlet form (see \cite[(1.3.17)]{bib11})
\[
\begin{aligned}
\sF&=\{f\in L^2(\bR^d,\fm_\gamma): \sE(f,f)<\infty\}, \\
\sE(f,g)&=\lim_{t\downarrow 0}\frac{1}{t}\int_{\bR^d} \left(f(x)-{}_hP^{(\gamma)}_tf(x)\right)g(x)\fm_\gamma(dx),\quad f,g\in \sF. 
\end{aligned}
\]
One can easily deduce that for any $f\in L^2(\bR^d,\fm_\gamma)$, 
\[
	\sE(f,f)=\lim_{t\downarrow 0}\frac{1}{t}\int_{\bR^d} \left(f(x)\psi_\gamma(x)-\mathrm{e}^{-\lambda_\gamma t}P_t(f\psi_\gamma)(x)\right)(f\psi_\gamma)(x)dx=\sG_{\lambda_\gamma}(f\psi_\gamma, f\psi_\gamma). 
\]
This leads to
\begin{equation}\label{EQ5FFF}
	\sF=\{f: f\psi_\gamma \in \cD(\sG)\},\quad \sE(f,f)=\sG_{\lambda_\gamma}(f\psi_\gamma, f\psi_\gamma),\quad f\in \sF. 
\end{equation}
Since $C_c^\infty(\bR^d\setminus \{0\})$ is a core of $(\sG,\cD(\sG))$ and $\psi_\gamma \in C^\infty(\bR^d\setminus \{0\})$ is positive, we can conclude that $C_c^\infty(\bR^d\setminus \{0\})$ is also a core of $(\sE,\sF)$. 
On the other hand, the part process $X^{(\gamma),0}$ of $X^{(\gamma)}$ on $\bR^d\setminus \{0\}$ is associated with the Dirichlet form $(\sE^{(\gamma),0},\sF^{(\gamma),0})$ given by \eqref{EQ5FGF} with $(\sE,\sF)=(\sE^{(\gamma)},\sF^{(\gamma)})$ and $G=\bR^d\setminus \{0\}$. Particularly, $C_c^\infty(\bR^d\setminus \{0\})$ is also a core of $(\sE^{(\gamma),0},\sF^{(\gamma),0})$ by \cite[Theorem~4.4.3]{bib11}. Mimicking \eqref{EQ5EGF} and applying \eqref{EQ5FFF}, one can obtain that for any $f\in C_c^\infty(\bR^d\setminus\{0\})$,
\[
	\sE^{(\gamma),0}(f,f)=\sE^{(\gamma)}(f,f)=\sG_{\lambda_\gamma}(f\psi_\gamma, f\psi_\gamma)=\sE(f,f),
\]
which implies $(\sE^{(\gamma),0},\sF^{(\gamma),0})=(\sE,\sF)$. Therefore, ${}_hW^{\alpha,(\gamma)}$ is equivalent to the part process of $X^{(\gamma)}$ on $\bR^d\setminus \{0\}$.

Finally we prove the second assertion. Clearly, $X^{(\gamma)}$ is a one-point reflection of ${}_hW^{\alpha,(\gamma)}$ by the first assertion. Note that for $\sE^{(\gamma)}$-q.e. $x\neq 0$,
\begin{equation}\label{EQ5HPG2}
	{}_h\mathbf{P}_{\gamma}^x(\zeta_h<\infty, {}_hW^{\alpha,(\gamma)}_{\zeta_h-}=0)={}_h\mathbf{P}_{\gamma}^x(\zeta_h<\infty)=\mathbf{P}^x_\gamma(\sigma_0<\infty)=1,
\end{equation}
where ${}_h\mathbf{P}_{\gamma}^x$ is the probability measure of ${}_hW^{\alpha,(\gamma)}$ starting from $x$, $\zeta_h$ is its life time and $\sigma_0=\inf\{t>0: X^{(\gamma)}_t=0\}$. The first equality is due to the conservativeness of $X^{(\gamma)}$ and that ${}_hW^{\alpha,(\gamma)}=X^{(\gamma),0}$ has no killing inside, and the last equality is already mentioned in \eqref{EQ2PXG}. 
Applying  \cite[Theorem~7.5.4]{bib19}, we can eventually conclude the uniqueness of one-point reflections. That completes the proof.
\end{proof}
\begin{remark}
At a heuristic level, $\text{Cap}^{(\gamma)}(\{0\})>0$ is a reflection of the fact that $\cA_\gamma$ has infinite potential at $0$ as we can see in \eqref{EQ1DAB}. The analogical result for the three-dimensional Brownian case, i.e. $\alpha=2$ and $d=3$, has been obtained in \cite{bib1}. 
It is also worth noting that for any $x\neq 0$, $\{x\}$ is of zero capacity relative to $\sG$ as well as $\sE^{(\gamma)}$ due to $\alpha<d$ and \eqref{EQ5FFF}. 
\end{remark}

With the help of Theorem~\ref{THM44}, we summarize an alternative characterization of the polymer model based on $\alpha$-stable process in Figure~\ref{FIG1}. The $h$-transform from $\Delta^{\alpha/2}$ to ${}_h W^{\alpha,(\gamma)}$ is reversible. Indeed, one can operate a similar $h$-transform with $h=1/\psi_\gamma$ on ${}_hW^{\alpha,(\gamma)}$ to regain the $\alpha$-stable process; see \eqref{EQ5FFF}. The transformation \eqref{EQ3DAF} or \eqref{EQ4DAF} enjoys a same form as $h$-transform. However, $P^{(\gamma)}_t$ is not Markovian (although $P_t^{(\gamma)}\psi_\gamma=\mathrm{e}^{\lambda_\gamma t}\psi_\gamma$ by \eqref{EQ3QTG}) and $1/\psi_\gamma$ is not excessive relative to $Q^{(\gamma)}_t$ either. As mentioned before, \eqref{EQ1DAB} is a heuristic expression of the informal perturbation of $\Delta^{\alpha/2}$ induced by a singular potential function $\beta_\gamma\cdot \delta_0$. From Figure~\ref{FIG1}, we figure out a rigorous probabilistic interpretation for this perturbation: it may be understood as one-point reflection at $0$ under certain $h$-transform. 

We present a corollary to illustrate further properties of $X^{(\gamma)}$ as well as its Dirichlet form $(\sE^{(\gamma)}, \sF^{(\gamma)})$ by means of one-point reflection.

\begin{corollary}\label{COR53}
Fix $\gamma\geq 0$. The following hold:
\begin{itemize}
\item[(1)] $0$ is regular for itself with respect to $X^{(\gamma)}$, i.e. $\bfP_\gamma^0(\sigma_0=0)=1$; 
\item[(2)] For any $\lambda>0$, $w_\lambda(x):=\mathbf{E}^x_{\gamma}(\mathrm{e}^{-\lambda \sigma_0}; \sigma_0<\infty)$ is identified with
\[
	w_{\lambda, \lambda_\gamma}(x):=\left\lbrace
		\begin{aligned}
		&\frac{u_{\lambda+\lambda_\gamma}(x)}{\psi_\gamma(x)},\quad x\neq 0; \\
		&1,\qquad\qquad\;\;\, x=0.
		\end{aligned}
	\right.
\] 
More precisely, $w_\lambda(x)=w_{\lambda,\lambda_\gamma}(x)$ for $\sE^{(\gamma)}$-q.e. $x$. 
\item[(3)] $X^{(\gamma)}$ admits no jump to or from $\{0\}$: for $\sE^{(\gamma)}$-q.e. $x\in \bR^d$,
\begin{equation}\label{EQ5PGX}
	\bfP_\gamma^x(X^{(\gamma)}_{t-}\in \bR^d\setminus \{0\}, X^{(\gamma)}_t=0, \text{ or }X^{(\gamma)}_{t-}=0, X^{(\gamma)}_t\in \bR^d\setminus \{0\};\; \exists t>0)=0. 
\end{equation}
\item[(4)] Let $(\sE^{(\gamma),0},\sF^{(\gamma),0})$ be the Dirichlet form associated with ${}_hW^{\alpha,(\gamma)}$ and fix $\lambda>0$. Then it holds 
\[
	\sF^{(\gamma)}=\sF^{(\gamma),0}\oplus w_\lambda:=\{c_1 f+c_2w_\lambda: f\in \sF^{(\gamma),0}, c_1,c_2\in \bR\}.  
\]
Particularly, $C_c^\infty(\bR^d\setminus\{0\})\oplus w_\lambda=\{c_1 f+c_2w_\lambda: f\in C_c^\infty(\bR^d\setminus\{0\}), c_1,c_2\in \bR\}$ is $\sE^{(\gamma)}_1$-dense in $\sF^{(\gamma)}$.
\end{itemize}
\end{corollary}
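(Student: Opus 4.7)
My plan is to prove the four items in order. For (1), I would invoke the Blumenthal--Getoor $0$--$1$ law, which gives $\mathbf{P}^0_\gamma(\sigma_0=0)\in\{0,1\}$. If this value were $0$, then $0$ would be irregular for itself and $\{0\}$ semipolar; since $X^{(\gamma)}$ is a symmetric Hunt process, semipolar sets are polar, contradicting $\text{Cap}^{(\gamma)}(\{0\})>0$ from Theorem~\ref{THM44}. Hence the value is $1$. For (2), I would apply the strong Markov property at $\sigma_0$, using that $X^{(\gamma)}_{\sigma_0}=0$ on $\{\sigma_0<\infty\}$ (by right-continuity at the first hitting time of the closed set $\{0\}$), to obtain
\[
R^{(\gamma)}_\lambda f(x)=R^{(\gamma),0}_\lambda f(x)+w_\lambda(x)\,R^{(\gamma)}_\lambda f(0).
\]
Theorem~\ref{THM44}(1) combined with \eqref{EQ5HPG} identifies $R^{(\gamma),0}_\lambda f=\psi_\gamma^{-1}U_{\lambda+\lambda_\gamma}(f\psi_\gamma)$; subtracting this from \eqref{EQ3RGL} will give
\[
w_\lambda(x)\,R^{(\gamma)}_\lambda f(0)=c^{(\gamma)}_\lambda\Bigl(\int f\psi_\gamma u_{\lambda+\lambda_\gamma}\,dx\Bigr)\frac{u_{\lambda+\lambda_\gamma}(x)}{\psi_\gamma(x)}.
\]
Choosing $f\in C_c^\infty(\bR^d)$ so that the integral is nonzero forces $w_\lambda=C\cdot u_{\lambda+\lambda_\gamma}/\psi_\gamma$ q.e.\ on $\bR^d\setminus\{0\}$ for some constant $C$; the value $C=1$ will follow from $w_\lambda(0)=1$ (by (1)) together with the asymptotic $u_{\lambda+\lambda_\gamma}(x)/u_{\lambda_\gamma}(x)\to 1$ as $x\to 0$, since both resolvent densities share the same leading singularity $c_{-\alpha,d}|x|^{\alpha-d}$ at the origin (Lemma~\ref{LMB} and \eqref{EQ2UXP}).

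For (3), I plan to apply the L\'evy system formula to the purely jumping regular Dirichlet form $(\sE^{(\gamma)},\sF^{(\gamma)})$. Since there is no killing, the Revuz measure of the associated PCAF is $\fm_\gamma$ and the L\'evy kernel is $N(x,dy)=2c_{\alpha,d}\psi_\gamma(y)|x-y|^{-d-\alpha}\psi_\gamma(x)^{-1}\,dy$, which is absolutely continuous in $y$. Hence $N(x,\{0\})=0$, ruling out jumps landing on $\{0\}$. For jumps issuing from $\{0\}$, the expected total is
\[
\mathbf{E}^x_\gamma\int_0^t 1_{\{X^{(\gamma)}_s=0\}}\,N(X^{(\gamma)}_s,\bR^d\setminus\{0\})\,ds,
\]
and I will show the occupation time at $0$ vanishes $\mathbf{P}^x_\gamma$-a.s.\ for q.e.\ $x$ via Fubini and $\fm_\gamma$-symmetry: $\int\fm_\gamma(dx)\,\mathbf{E}^x_\gamma\int 1_{\{X^{(\gamma)}_s=0\}}\,ds=0$ because $\fm_\gamma(\{0\})=0$. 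This yields \eqref{EQ5PGX}.

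For (4), since $\text{Cap}^{(\gamma)}(\{0\})\in(0,\infty)$ by Theorem~\ref{THM44}, the $\sE^{(\gamma)}_\lambda$-equilibrium potential of $\{0\}$ exists in $\sF^{(\gamma)}$ and, by its probabilistic interpretation, coincides q.e.\ with $w_\lambda$; so $w_\lambda\in\sF^{(\gamma)}$. For any $f\in\sF^{(\gamma)}$, the function $g:=f-\tilde f(0)w_\lambda$ lies in $\sF^{(\gamma)}$ and its quasi-continuous version vanishes at $0$, hence $g\in\sF^{(\gamma),0}$ by \eqref{EQ5FGF}; uniqueness of the decomposition is immediate from $\tilde w_\lambda(0)=1$. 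Density of $C_c^\infty(\bR^d\setminus\{0\})\oplus w_\lambda$ then follows by approximating the $\sF^{(\gamma),0}$-component of $f$ in $\sE^{(\gamma)}_1$-norm, using that $C_c^\infty(\bR^d\setminus\{0\})$ is a core of $(\sE^{(\gamma),0},\sF^{(\gamma),0})$ (already established in the proof of Theorem~\ref{THM44}). The main technical hurdle will be step (2): pinning down the multiplicative constant $C=1$, which rests on the regularity of $0$ proved in (1) and on the fact that the leading singularity of $u_\lambda$ at the origin is $\lambda$-independent, so that the q.e.\ identification on $\bR^d\setminus\{0\}$ extends continuously across the origin.
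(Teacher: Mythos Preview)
Your proposal is correct; the main divergence from the paper is in item~(3). For (1) and (4) the paper simply invokes \cite[Theorem~7.5.4]{bib19}, whose content is precisely what you spell out (Blumenthal $0$--$1$ law plus semipolar\,$=$\,polar for (1), and the equilibrium-potential decomposition for (4)). For (2) the paper compares the explicit resolvent \eqref{EQ3RGL}/\eqref{EQ4RLF} with the abstract formula \cite[(7.5.6)]{bib19} and then upgrades $\fm_\gamma$-a.e.\ to q.e.\ via continuity of $w_{\lambda,\lambda_\gamma}$ (Lemma~\ref{LMB}(2)); your strong-Markov derivation of the same decomposition is equivalent. For (3), however, the paper does \emph{not} argue via the L\'evy system alone: it verifies three structural hypotheses (A.2)--(A.4) of \cite[Theorem~7.5.6]{bib19} --- namely $\int w_\lambda\,d\fm_\gamma<\infty$, a positive lower bound for $R^{(\gamma),0}_1\varphi$ on compacts of $\bR^d\setminus\{0\}$, and $J_0(B(r)\times B(2r)^c)<\infty$ --- and then appeals to the It\^o excursion construction of the one-point reflection, which forces path continuity at the attachment point. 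The paper even remarks after the corollary that the L\'evy system handles ``no jump \emph{to} $0$'' but that ``no jump \emph{from} $0$'' was obtained through the excursion construction. Your occupation-time argument shows this second direction can be done directly and more cheaply: since the transition density is absolutely continuous, $\int_0^t 1_{\{X^{(\gamma)}_s=0\}}\,ds=0$ a.s., so the right side of the L\'evy system identity vanishes irrespective of how one defines $N(0,\cdot)$. What the paper's route buys is that (3) falls out of a general structural theorem about one-point extensions rather than a bespoke computation.
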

\begin{proof}
The first and fourth assertions are consequences of \cite[Theorem~7.5.4]{bib19}. By comparing \eqref{EQ3RGL} or \eqref{EQ4RLF} with \cite[(7.5.6)]{bib19}, a straightforward computation yields $w_\lambda=w_{\lambda,\lambda_\gamma}$, $\fm_\gamma$-a.e. Since $w_{\lambda,\lambda_\gamma}$ is continuous by Lemma~\ref{LMB}~(2) and $w_\lambda$ is $\sE^{(\gamma)}$-quasi-continuous, it follows that  $w_\lambda(x)=w_{\lambda,\lambda_\gamma}(x)$ for $\sE^{(\gamma)}$-q.e. $x$. 

To show the third assertion, we shall apply \cite[Theorem~7.5.6]{bib19} and so it suffices to verify the conditions \textbf{(A.2) (A.3)} and \textbf{(A.4)} there. For any $\lambda>0$, it follows from the second assertion that
\[
	\int w_\lambda(x)\fm_\gamma(dx)=\int u_{\lambda+\lambda_\gamma}(x)u_{\lambda_\gamma}(x)dx<\infty. 
\]
Hence \textbf{(A.2)} holds. Note that $\varphi(x):=\mathbf{P}^x_\gamma(\sigma_0<\infty)\equiv 1$. Denote the resolvent of ${}_hW^{\alpha,(\gamma)}$ by $R^{(\gamma),0}_\lambda$. Then from $\psi_\gamma=u_{\lambda_\gamma}$, \eqref{EQ5HPG} and the resolvent equation, we obtain
\[
R^{(\gamma),0}_1\varphi=\int_0^\infty \mathrm{e}^{-t}{}_hP_t^{(\gamma)}\varphi dt=\frac{u_{\lambda_\gamma+1}\ast \psi_\gamma}{\psi_\gamma}=\frac{u_{\lambda_\gamma}-u_{\lambda_\gamma+1}}{u_{\lambda_\gamma}}=1-w_{1,\lambda_\gamma}. 
\]
Then it is easy to conclude from \eqref{EQAULXA} that for any compact set $K\subset \bR^d\setminus \{0\}$, 
\[
	\inf_{x\in K}R^{(\gamma),0}_1\varphi(x)=1-\sup_{x\in K}w_{1,\lambda_\gamma}(x)>0,
\] 
which leads to \textbf{(A.3)}. Note that the jumping measure of ${}_hW^{\alpha,(\gamma)}$ is $$J_0(dxdy)=\frac{c_{\alpha,d}}{2}\frac{\psi_\gamma(x)\psi_\gamma(y)}{|x-y|^{d+\alpha}}dxdy.$$ 
Fix $r>0$. For $x\in B(r), y\notin B(2r)$, it holds $|x-y|\geq |y|-|x|\geq |y|/2$. Thus
\[
\begin{aligned}
J_0(B(r)\times B(2r)^c)&\lesssim \int_{B(r)}|x|^{\alpha-d}dx\int_{B(2r)^c}\frac{|y|^{\alpha-d}}{|x-y|^{d+\alpha}}dy \\
&\lesssim \int_{B(r)}|x|^{\alpha-d}dx\int_{B(2r)^c}|y|^{-2d}dy<\infty. 
\end{aligned}
\] 
Consequently, \textbf{(A.4)} holds. That completes the proof. 
\end{proof}
\begin{remark}
\begin{itemize}
\item[(1)] When $d\geq 2$, \eqref{EQ5HPG2}, \eqref{EQ5PGX} and $w_\lambda(x)=w_{\lambda,\lambda_\gamma}(x)$ hold for all $x\in \bR^d$ if we replace $X^{(\gamma)}$ by a suitable equivalent version of it. Indeed, since $\psi_\gamma$ is a radial function, one can easily find that $X^{(\gamma)}$ is rotationally invariant. Consequently, all terms in \eqref{EQ5HPG2}, \eqref{EQ5PGX} and the second assertion of Corollary~\ref{COR53} depend on $|x|$ only. For any $r>0$, $\{x: |x|=r\}$ is of positive capacity relative to $W^\alpha$ due to $\alpha>d/2\geq 1$ (see e.g. \cite[Remark~2.2]{bib23}). Then it follows from \eqref{EQ5FFF} that $\{x: |x|=r\}$ is also of positive capacity relative to $\sE^{(\gamma)}$. Recall that $\{0\}$ is of positive capacity as shown in Theorem~\ref{THM44}. As a result, we can build an equivalent version $\tilde{X}$ of $X^{(\gamma)}$ by virtue of rotation invariance, so that these equalities hold for all $x$ relative to $\tilde{X}$. 
\item[(2)] \eqref{EQ5PGX} tells us the trajectories of $X^{(\gamma)}$ are not only c\`adl\`ag but also continuous at the moments $t$ when $X^{(\gamma)}_t=0$, although its associated Dirichlet form contains no diffusion part. 
\end{itemize}
\end{remark}

The first part of \eqref{EQ5PGX}, i.e. $X^{(\gamma)}$ admits no jump to $\{0\}$, can be verified by means of so-called L\'evy system (see e.g. \cite[A.3]{bib11}) directly. Note that the L\'evy system $(N,H)$ of $X^{(\gamma)}$ may be taken to be
\[
	N(x,dy)=c_{\alpha,d}\frac{\psi_\gamma(y)}{\psi_\gamma(x)}\frac{dy}{|x-y|^{d+\alpha}},\quad H_t=t. 
\]
Then from \cite[(A.3.23)]{bib11}, we obtain
\[
\begin{aligned}
	\mathbf{P}^x_\gamma(X^{(\gamma)}_{t-}&\in \bR^d\setminus \{0\}, X^{(\gamma)}_t=0;\; \exists t>0) \\ 
	&\leq \mathbf{E}^x_\gamma\left( \sum_{t> 0}1_{(\bR^d\setminus \{0\}) \times \{0\}}(X^{(\gamma)}_{t-},X^{(\gamma)}_t)\right)\\&=\mathbf{E}^x_\gamma \int_0^\infty dt\int_{\bR^d} 1_{(\bR^d\setminus \{0\}) \times \{0\}}(X^{(\gamma)}_t,y)N(X^{(\gamma)}_t,dy)=0. 
\end{aligned}\]
However, the other part of \eqref{EQ5PGX} was led in \cite[Theorem~7.5.6]{bib19} by a classical probabilistic construction of $X^{(\gamma)}$ initiated by It\^o. Roughly speaking, let $\{\nu_t:t>0\}$ be the unique ${}_hW^{\alpha,(\gamma)}$-\emph{entrance law}, i.e. $\nu_t$ is a $\sigma$-finite measure on $\bR^d\setminus \{0\}$ and $\nu_s\cdot  {}_hP^{(\gamma)}_t=\nu_{t+s}$ for every $t,s>0$, such that
\[
	\int_0^\infty \nu_tdt=\fm_\gamma. 
\]
This entrance law determines a so-called \emph{excursion measure} $\mathbf{n}$ on the space $W$ of c\`adl\`ag paths $w$ in $\bR^d\setminus \{0\}$ defined on a time interval $(0,\zeta(w))$ with $w(0+)=0$ and $w(\zeta-)\in \{0,\partial\}$ (the trap $\partial$ can be taken to be $\infty$ in the current case). Then a Poission point process $\mathbf{p}=\{\mathbf{p}_t: t\geq 0\}$ taking values in $W$ with characteristic $\mathbf{n}$ can be constructed on a suitable probability measure space. By piecing together the excursions $\mathbf{p}$ until the first non-returning excursion (i.e. $w(\zeta-)=\partial$), we create a path $\omega^0$ starting  at $0$. Then $X^{(\gamma)}$ can be eventually constructed by joining the path of ${}_hW^{\alpha,(\gamma)}$ to $\omega^0$. The details of this construction are referred to e.g. \cite[Theorem~7.5.6]{bib19}. Note that the path $\omega^0$ is continuous at the moments $t$ when $\omega^0(t)=0$, as indicates that $X^{(\gamma)}$ admits no jump from $\{0\}$ to $\bR^d\setminus \{0\}$.

\section{Near the critical point}\label{SEC6}

As mentioned in \S\ref{SEC1}, the behaviour of polymer near the critical point is measured by the parameter $\lambda_0(\gamma):=\sup\sigma(\cA_\gamma)=\lambda_\gamma$ given by \eqref{EQ2LGG} in \cite{bib5}. Note that $\lim_{\gamma\downarrow \gamma_{cr}} \lambda_0(\gamma)=0=\lambda_0(\gamma_{cr})$ and the rate of convergence is equal to
\[
	\lim_{\gamma\downarrow \gamma_{cr}}\frac{\log \lambda_0(\gamma)}{\log \gamma}=\frac{\alpha}{d-\alpha},
\]
which only depends on $d$ and $\alpha$. This fact demonstrates so-called universality of critical phenomenon as well.  In this section, we will describe the critical behaviour from probabilistic viewpoint by showing that the process $X^{(\gamma)}$ is convergent to $X^{(0)}$ as $\gamma\downarrow 0$ in a certain meaning. 

Fix a sequence $\gamma_n\downarrow 0$ and for convenience's sake, denote 
\[
	X^n:=X^{(\gamma_n)},\quad \bfP^x_n:=\bfP^x_{\gamma_n},\quad X:=X^{(0)},\quad \bfP^x:=\bfP^x_{0}.
\]
Take a non-negative function $\phi$ on $\bR^d$ such that 
\begin{equation}\label{EQ6RDP}
	\phi/\psi_{\gamma_1}\in L^2(\bR^d),\quad \int_{\bR^d}\phi(x)dx=1. 
\end{equation}
Then $\bfP^\phi_n(\cdot):=\int_{\bR^d}\bfP^x_n(\cdot)\phi(x)dx$ and $\bfP^\phi(\cdot):=\int_{\bR^d}\bfP^x(\cdot)\phi(x)dx$ define probability measures on $\Omega=D([0,\infty),\bR^d)$. The main result of this section is stated as follows.

\begin{theorem}\label{THM61}
Let $\phi$ be a non-negative function satisfying \eqref{EQ6RDP}. Then $X^n$ is weakly convergent to $X$  under the initial distribution $\phi(x)dx$ as $n\rightarrow \infty$. More precisely, for any bounded continuous function $f$ on $\Omega$ endowed with the Skorohod topology,
\begin{equation}\label{EQ6NOF}
\lim_{n\rightarrow \infty} \int_{\Omega}f(\omega) \bfP^\phi_n(d\omega)=\int_{\Omega}f(\omega) \bfP^\phi(d\omega).
\end{equation}
\end{theorem}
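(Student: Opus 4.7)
The proof splits into the two classical pillars for weak convergence in $D([0,\infty),\bR^d)$: convergence of finite-dimensional distributions of $X^n$ to those of $X$ under the initial law $\phi(x)dx$, and tightness of the sequence $\{\bfP^\phi_n\}_{n\geq 1}$ in the Skorohod topology. Once both are in hand, the conclusion \eqref{EQ6NOF} follows from the standard Prohorov-type argument.

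For finite-dimensional convergence I would rely on Mosco convergence of $(\sE^{(\gamma_n)},\sF^{(\gamma_n)})$ to $(\sE^{(0)},\sF^{(0)})$, which is the content of Theorem~\ref{THM63}. Because the base measures $\fm_{\gamma_n}=\psi_{\gamma_n}^2dx$ vary with $n$, one has to interpret this in the Kuwae--Shioya framework of Mosco convergence on varying Hilbert spaces. The monotonicity $\psi_{\gamma_n}\uparrow \psi_0$ (which follows from $\gamma_n\downarrow 0$, the formula \eqref{EQ2LGG} for $\lambda_{\gamma_n}$, and Lemma~\ref{LMB}) provides natural identification operators $L^2(\bR^d,\fm_{\gamma_n})\to L^2(\bR^d,\fm_0)$. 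Mosco convergence then supplies strong convergence of resolvents $R^{(\gamma_n)}_\lambda \to R^{(0)}_\lambda$ and semigroups $Q^{(\gamma_n)}_t \to Q^{(0)}_t$. The hypothesis $\phi/\psi_{\gamma_1}\in L^2(\bR^d)$ combined with $\psi_{\gamma_n}\geq \psi_{\gamma_1}$ guarantees that the initial density $\phi/\psi_{\gamma_n}^2$ of $\phi(x)dx$ with respect to $\fm_{\gamma_n}$ lies uniformly in $L^2(\fm_{\gamma_n})$ and converges to $\phi/\psi_0^2$. Testing against $C_c(\bR^d)$ observables in iterated semigroup expressions then yields convergence of all finite-dimensional distributions.

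For tightness I would apply Aldous' criterion, which reduces the problem to a compact-containment condition at each $t$ and a uniform oscillation estimate at stopping times. Compact containment is easiest: writing $\bfP^\phi_n(|X^n_t|>R)$ via the semigroup identity $Q^{(\gamma_n)}_t f = e^{-\lambda_{\gamma_n}t}P^{(\gamma_n)}_t(f\psi_{\gamma_n})/\psi_{\gamma_n}$ and exploiting the monotone convergence $\psi_{\gamma_n}\uparrow \psi_0$ together with $\int \phi=1$ gives the required $n$-uniform control by a monotone/dominated convergence argument. For the oscillation bound, plug test functions $f\in C_c^\infty(\bR^d)$ (a common core for all forms by Theorems~\ref{THM21}--\ref{THM22}) into Fukushima's decomposition $f(X^n_t)-f(X^n_0)=M^{f,n}_t+N^{f,n}_t$; the predictable quadratic variation of the MAF is controlled by the jump kernel $j_{\gamma_n}(x,dy)\fm_{\gamma_n}(dx)$, while the CAF of zero energy is controlled by $\sE^{(\gamma_n)}(f,f)$. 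Both quantities admit $n$-uniform bounds on compact subsets of $\bR^d$, using the monotone convergence of $\psi_{\gamma_n}$ and the inequalities verified in \S\ref{SEC311}. Combined with the quasi-continuous-version argument for the initial distribution, this yields the Aldous estimate after a strong Markov step-and-stopping-time reduction.

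The main obstacle is the behaviour near the origin. Because $\psi_{\gamma_n}(x)\sim |x|^{\alpha-d}$ blows up at $0$ and the jumping measure $j_{\gamma_n}$ is singular there, direct estimates of oscillations around $\{0\}$ are delicate; moreover, $\{0\}$ carries positive capacity relative to every $\sE^{(\gamma_n)}$ (Theorem~\ref{THM44}), and the processes actually spend time at $0$ performing Itô-excursions into $\bR^d\setminus\{0\}$. The technical work therefore consists in exploiting the capacity bound $\mathrm{Cap}^{(\gamma_n)}(\{0\})\leq \mathrm{Cap}^{(0)}(\{0\})$ (verified at the start of the proof of Theorem~\ref{THM44}) together with quasi-continuous representatives and the explicit excursion-theoretic description recalled at the end of \S\ref{SEC5}, in order to transfer the uniform oscillation control across the singular point. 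Patching the near-$0$ estimate with the compact-set estimate on $\bR^d\setminus\{0\}$, where $\psi_{\gamma_n}$ is uniformly bounded and bounded away from $0$, completes the verification of Aldous' criterion and hence the theorem.
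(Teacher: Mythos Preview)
Your finite-dimensional distribution argument via Mosco convergence of the Dirichlet forms on varying $L^2$-spaces is essentially what the paper does in its Step~4, so that part is on track.

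The tightness argument, however, has a genuine gap, and the paper takes a different route that sidesteps exactly the difficulty you flag. Your plan is to run Aldous' criterion using Fukushima's decomposition for $f\in C_c^\infty(\bR^d)$. Two problems arise. First, the claim that ``the CAF of zero energy is controlled by $\sE^{(\gamma_n)}(f,f)$'' is not correct: by definition $N^{f,n}$ has \emph{zero} energy, and $\sE^{(\gamma_n)}(f,f)$ measures the energy of the martingale part $M^{f,n}$, not any pathwise bound on $N^{f,n}$. To control $N^{f,n}_t$ pathwise one typically needs $f$ in the domain of the generator, so that $N^{f,n}_t=\int_0^t \sA_{\gamma_n}f(X^n_s)\,ds$ with $\sA_{\gamma_n}f$ bounded. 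But the paper only establishes $C_c^\infty(\bR^d\setminus\{0\})\subset\cD(\sA_{\gamma_n})$ (see \S\ref{SEC312}); for $f$ that does not vanish near $0$ you have no such representation, and the singularity of $\psi_{\gamma_n}$ at $0$ obstructs any direct estimate. Your proposed fix via excursion theory and capacity bounds is not a workable substitute: the excursion description at the end of \S\ref{SEC5} gives qualitative information about continuity at visits to $0$, not quantitative uniform-in-$n$ oscillation bounds.

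The paper avoids this entirely by not attempting Aldous' criterion on $X^n$ itself. Instead it proves tightness of $(g_1(X^n),\dots,g_m(X^n))$ for $g_i\in C_c(\bR^d)$ via the Ethier--Kurtz perturbed-martingale criterion \cite[Theorem~3.9.4]{bib12}, and then invokes \cite[Corollary~3.9.2]{bib12} to pass from tightness of these finite-dimensional images to weak convergence of $X^n$. The key device is to approximate each $g$ uniformly on paths by $\lambda_0 R^n_{\lambda_0}h$ for suitable $h\in C_c(\bR^d)$: for resolvent images one has $\sA_{\gamma_n}(R^n_{\lambda_0}h)=\lambda_0 R^n_{\lambda_0}h-h$, which is \emph{globally} bounded by $2\|h\|_\infty$ uniformly in $n$, so Fukushima's decomposition yields a clean martingale-plus-bounded-drift structure with no singularity at $0$. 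The uniform approximation $\sup_{t\le T}|\lambda_0 R^n_{\lambda_0}h(X^n_t)-g(X^n_t)|<\varepsilon$ is obtained in two stages (the paper's Steps~1 and~2) using the capacity inequality of \cite[(2.1.7)]{bib11}, the bound $\text{Cap}^n\le\text{Cap}^{(0)}$, and the convergence $\sE^n_1(R^n_\lambda h-R_\lambda h,R^n_\lambda h-R_\lambda h)\to 0$ from Corollary~\ref{COR63}. This is the mechanism by which the singularity at the origin is handled: one never estimates oscillations of $X^n$ near $0$ directly, but rather shows that the exceptional sets where the resolvent approximation fails have small capacity uniformly in $n$, hence are rarely hit (Lemma~\ref{LM64}).
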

\begin{remark}\label{RM62}
The condition $\phi/\psi_{\gamma_1}\in L^2(\bR^d)$ implies that $\phi/\psi_{\gamma_n}, \phi/\psi_0\in L^2(\bR^d)$ as well. There are sufficient conditions, like $\phi$ is bounded and has compact support, leading to it. Moreover, for every $\gamma\geq \gamma_1$,
\[
	\phi(x)=\frac{\psi_{\gamma}^2(x)}{\int \psi_{\gamma}^2(x)dx}
\]
is also an example satisfying \eqref{EQ6RDP}. 
\end{remark}

To prove this theorem, assume without loss of generality that all $X^n$ and $X$ are realized on a common family of probability measure spaces $(\Xi, \mathbf{Q}^x)_{x\in \bR^d}$, where $\Xi$ is a certain measurable space and $\mathbf{Q}^x$ is a probability measure on it (although they are defined on $\Omega$ before Theorem~\ref{THM61}). In other words, for $\varpi\in \Xi$, $t\mapsto X^n_t(\varpi)$ or $t\mapsto X_t(\varpi)$ forms a c\`adl\`ag path in $\bR^d$, and by letting
\[
\begin{aligned}
	&X^n:\Xi\rightarrow \Omega,\quad \varpi \mapsto X^n_\cdot(\varpi),  \\
	&X:\Xi\rightarrow \Omega,\quad \varpi \mapsto X_\cdot(\varpi),
\end{aligned}
\] 
it holds $\mathbf{P}^x_{n}=\mathbf{Q}^x\circ (X^n)^{-1}$ and $\mathbf{P}^x=\mathbf{Q}^x\circ X^{-1}$ for $x\in \bR^d$. 
Then $\bfQ^\phi(\cdot):=\int_{\bR^d}\bfQ^x(\cdot)\phi(x)dx$ defines a new probability measure on $\Xi$ and \eqref{EQ6NOF} is equivalent to
\begin{equation}\label{EQ6NQP}
\lim_{n\rightarrow \infty}\mathbf{Q}^\phi(f(X^n))=\bfQ^\phi(f(X)). 
\end{equation}
In what follows, the proof of \eqref{EQ6NQP} will be divided into two parts. The first one is to prove the Mosco convergence of the associated Dirichlet forms of $X^n$ and the second is to demonstrate the tightness of $X^n$.

\subsection{Mosco convergence}

The conception of Mosco convergence is reviewed in Appendix~\ref{SECB} (see Definition~\ref{DEFB6}). Recall that the Dirichlet form of $X^n$ (resp. $X$) is $(\sE^{(\gamma_n)},\sF^{(\gamma_n)})$ on $L^2(\bR^d,\fm_{\gamma_n})$ (resp. $(\sE^{(0)},\sF^{(0)})$ on $L^2(\bR^d,\fm_{0})$). Denote analogically
\begin{equation}\label{EQ6HNL}
H_n:=L^2(\bR^d,\fm_{\gamma_n}),\quad (\sE^n,\sF^n):=(\sE^{(\gamma_n)},\sF^{(\gamma_n)}),
\end{equation}
and
\begin{equation}\label{EQ6HLR}
H:=L^2(\bR^d,\fm_0),\quad (\sE,\sF):=(\sE^{(0)},\sF^{(0)}). 
\end{equation}
As explained in Remark~\ref{RMB2}, $H_n$ converges to $H$ in the sense of Definition~\ref{DEFB1}. Set $\cH:=\{H_n,H:n\geq 1\}$. Then it is sensible to explore Mosco convergence working on them. Since this result is of independent interest as mentioned in \S\ref{SEC415}, we conclude it as a theorem. 

%In this section we prove the weak convergence of the family of the limiting Markov processes around the critical value. For the first step, we prove the Mosco convergence of the family of Markov processes, which deduces the finite dimensional convergence of the distributions of the Markov processes. And then we prove the tightness of the Markov processes, which deduces the weak convergence along with results from the first step.

%We denote a sequence of Dirichlet forms $(\mathcal{E}^n, \mathcal{F}^n)$ defined on Hilbert space $H^n$ and $(\mathcal{E}^0, \mathcal{F}^0)$ defined on Hilbert space $H^0$ as below,

%\begin{remark}
%Since $\{\sigma_n\}$ are an increasing sequence, $\sigma_n\leq\sigma$. According to Definition \ref{Def 1.3.}, we can get $H^n\longrightarrow H^0$ by choosing $C=H^0$ and $\Phi_n=id$. 
%\end{remark}

%In the appendix, we recall some basic definitions and results connected with the notion of convergent Hilbert spaces.

\begin{theorem}\label{THM63}
Let $(\mathscr{E}^n, \mathscr{F}^n)$ and $(\sE,\sF)$ be the Dirichlet forms in \eqref{EQ6HNL} and \eqref{EQ6HLR}. Then $\mathscr{E}^n$ converges to $\mathscr{E}$ in the sense of Mosco.
\end{theorem}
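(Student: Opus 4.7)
Since $\lambda_{\gamma_n}\downarrow 0$ and $\lambda\mapsto u_\lambda$ is decreasing, we have the monotone convergence $\psi_{\gamma_n}=u_{\lambda_{\gamma_n}}\uparrow u_0=\psi_0$ pointwise on $\bR^d\setminus\{0\}$, and hence $\fm_{\gamma_n}\uparrow\fm_0$. Taking $C_c^\infty(\bR^d)$ as a common dense subspace (its square-integrability against $\fm_0$ is guaranteed by $\alpha>d/2$), monotone convergence gives $\|g\|_{H_n}^2\uparrow\|g\|_H^2$ for every $g\in C_c^\infty(\bR^d)$, so that $H_n\to H$ in the Kuwae--Shioya sense. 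It remains to verify the liminf condition (M1) and the recovery-sequence condition (M2) characterizing Mosco convergence.

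\medskip

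\textbf{Condition (M2).} For $u\in H$ with $\sE(u,u)=+\infty$ the condition is vacuous. For $u\in\sF$, invoke Theorem~\ref{THM22} to pick $g_k\in C_c^\infty(\bR^d)$ with $\sE_1(g_k-u,g_k-u)\to 0$. Because $\psi_{\gamma_n}(x)\psi_{\gamma_n}(y)\uparrow\psi_0(x)\psi_0(y)$ pointwise, two applications of monotone convergence yield $\sE^n(g_k,g_k)\uparrow\sE(g_k,g_k)$ and $\|g_k\|_{H_n}^2\uparrow\|g_k\|_H^2$ for every fixed $k$. A standard diagonal selection then produces $n_k\uparrow\infty$ and a recovery sequence $u_n:=g_k$ for $n_k\le n<n_{k+1}$, strongly convergent (in the Kuwae--Shioya sense) to $u$ and satisfying $\limsup_n\sE^n(u_n,u_n)\le\sE(u,u)$.

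\medskip

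\textbf{Condition (M1).} Suppose $u_n\in H_n$ weakly converges to $u\in H$ with $\liminf_n\sE^n(u_n,u_n)<\infty$, and pass to a subsequence where this liminf is attained. The key step is to rewrite the form as an $L^2(\bR^{2d})$-norm by setting
\begin{equation*}
F_n(x,y):=\frac{u_n(x)-u_n(y)}{|x-y|^{(d+\alpha)/2}}\sqrt{\psi_{\gamma_n}(x)\psi_{\gamma_n}(y)},
\end{equation*}
so that $\|F_n\|_{L^2(\bR^{2d})}^2=(2/c_{\alpha,d})\sE^n(u_n,u_n)$ is bounded; extract a further subsequence with $F_n\rightharpoonup F$ weakly in $L^2(\bR^{2d})$. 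The plan is to identify $F(x,y)=(u(x)-u(y))\sqrt{\psi_0(x)\psi_0(y)}/|x-y|^{(d+\alpha)/2}$. Testing $F_n$ against $\phi\in C_c^\infty(\bR^{2d})$ supported in $\{|x-y|\ge\varepsilon\}$ for some $\varepsilon>0$ (such $\phi$ are dense in $L^2(\bR^{2d})$), I decompose $\int F_n\phi\,dxdy$ into a sum of $L^2(\bR^d)$-pairings $(v_n,k_n^\pm)_{L^2(\bR^d)}$, where $v_n:=u_n\psi_{\gamma_n}$ and $k_n^\pm$ are explicit auxiliary functions with a common compact support. The weak $L^2$-convergence $v_n\rightharpoonup u\psi_0$ follows by applying weak Kuwae--Shioya convergence of $u_n$ to the test functions $\phi/\psi_{\gamma_n}$, which strongly converge in the Kuwae--Shioya sense to $\phi/\psi_0$ by dominated convergence. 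The strong $L^2$-convergence $k_n^\pm\to k^\pm$ is obtained from pointwise convergence together with uniform boundedness on the common compact support, using $\psi_{\gamma_n}\ge\psi_{\gamma_1}$ bounded below on compact subsets of $\bR^d\setminus\{0\}$ and the power-law $\psi_0(x)=c_{-\alpha,d}|x|^{\alpha-d}$ near the origin (with $\alpha<d$ and $\alpha>d/2$ ensuring the requisite integrability). The identification of $F$, combined with lower semicontinuity of the $L^2$-norm under weak convergence, finally yields $\sE(u,u)=(c_{\alpha,d}/2)\|F\|_{L^2(\bR^{2d})}^2\le\liminf_n\sE^n(u_n,u_n)$.

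\medskip

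\textbf{Main obstacle.} The delicate part is the identification of the weak $L^2(\bR^{2d})$-limit $F$ in (M1). Although the monotonicity $\psi_{\gamma_n}\uparrow\psi_0$ makes (M2) essentially automatic, in (M1) the $n$-dependent factor $\sqrt{\psi_{\gamma_n}(x)\psi_{\gamma_n}(y)}$ appearing inside $F_n$ is entangled with the $n$-dependent inner-product weight $\psi_{\gamma_n}^2$, so one must carefully translate Kuwae--Shioya weak convergence of $u_n\in H_n$ into plain weak $L^2(\bR^d)$-convergence of $v_n=u_n\psi_{\gamma_n}$. Both translations reduce to dominated convergence arguments that hinge on the uniform comparison $\psi_{\gamma_n}\le\psi_0$ and on the sharp range $d/2<\alpha<d$ ensuring local integrability of $\psi_0^2$ and controlling the behaviour of $1/\psi_{\gamma_n}$ near the origin.
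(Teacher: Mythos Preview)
Your proof is correct and follows essentially the same route as the paper. For (M1) you rewrite the form as an $L^2(\bR^{2d})$-norm, extract a weak limit of $F_n$, and identify it by testing against compactly supported functions; the paper does exactly this, decomposing the relevant integrals into a term controlled by the weak $L^2(\bR^d)$-convergence $u_n\psi_{\gamma_n}\rightharpoonup u\psi_0$ (its $\sI_n^{1,2}$, your pairing $(v_n,k^\pm)$) and a term controlled by the uniform convergence of the weight ratios $\psi_{\gamma_n}(y)/\psi_{\gamma_n}(x)\to\psi_0(y)/\psi_0(x)$ on compacta (its $\sI_n^{1,1}$, your $k_n^\pm\to k^\pm$). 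The only cosmetic difference is in (M2): you build the recovery sequence by a direct diagonal argument exploiting $\sE^n(g_k,g_k)\le\sE(g_k,g_k)$ and the $C_c^\infty$-core property from \S\ref{SEC412}, whereas the paper obtains it more abstractly by applying the Kuwae--Shioya framework a second time to the form-Hilbert spaces $H'_n=(\sF^n,\|\cdot\|_{\sE^n_1})$, $H'=(\sF,\|\cdot\|_{\sE_1})$ and invoking Lemma~\ref{LMB4}(1). Both arguments rest on the same two ingredients---monotonicity $\psi_{\gamma_n}\uparrow\psi_0$ and denseness of $C_c^\infty(\bR^d)$ in $\sF$---so the difference is purely one of packaging.
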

\begin{proof}
To show the condition (M1), let $f_n$ be a sequence converging to $f$ weakly in $\cH$ in the sense of Definition~\ref{DEFB3} and suppose $\lim_n\mathscr{E}^n(f_n,f_n)<\infty$ without loss of generality. It suffices to prove 
\begin{equation}\label{EQ6EFF}
\sE(f,f)\leq \lim_n\mathscr{E}^n(f_n,f_n).
\end{equation}
To this end, denote 
\[
	J_n(x, y)=\frac{\psi_{\gamma_n}(x)\psi_{\gamma_n}(y)}{|x-y|^{(d+\alpha)}},\quad  J(x, y)=\frac{\psi_{0}(x)\psi_{0}(y)}{|x-y|^{(d+\alpha)}}.
\] 
Put $\bar{f}_n(x, y)=\big(f_n(x)-f_n(y)\big)\sqrt{J_n(x, y)}$ for $(x, y)\in\mathbb{R}^d\times\mathbb{R}^d\setminus D$, which form a bounded sequence in $L^2:=L^2(\mathbb{R}^d\times\mathbb{R}^d\setminus D, dxdy)$, and thus there is a subsequence, still denoted by $\{\bar{f}_n\}$, converging to some function $\bar{f}$ weakly in $L^2$. 
We claim that 
$$\bar{f}(x, y)=\big(f(x)-f(y)\big)\sqrt{J(x, y)}=:\tilde{f}(x,y), \quad dxdy\text{-a.e.},$$
which leads to \eqref{EQ6EFF} since 
\[
\sE(f,f)=\frac{c_{\alpha,d}}{2}\|\bar{f}\|^2_{L^2}\leq \frac{c_{\alpha,d}}{2}\liminf_n \|\bar{f}_n\|^2_{L^2}=\lim_n \sE^n(f_n,f_n). 
\]
Indeed, take an arbitrary non-negative function $g\in C_c(\mathbb{R}^d\times\mathbb{R}^d\setminus D)$. Then there is a constant $R>0$ such that 
\begin{equation}\label{EQ6GXY}
\text{supp}[g]\subset \{(x,y): |x|<R,|y|<R, |x-y|>1/R\}.
\end{equation} 
For any $n$, we have
\[
\begin{aligned}
\bigg|\int&\left(\bar{f}(x, y)-\tilde{f}(x,y)\right)g(x, y)dxdy\bigg| \\
	&\leq \left|\int\left(\bar{f}(x, y)-\bar{f}_n(x,y)\right)g(x, y)dxdy\right|+\left|\int\left(\bar{f}_n(x, y)-\tilde{f}(x,y)\right)g(x, y)dxdy\right|.
\end{aligned}
\]
The first term on the right hand side converges to $0$ as $n\rightarrow \infty$, since $\bar{f}_n$ converges to $\bar{f}$ weakly in $L^2$. Denote the second term by $\mathscr{I}_n$. Note that
\[
\begin{aligned}
\sI_n\leq \bigg|&\int\left(f_n(x)\sqrt{J_n(x, y)}-f(x)\sqrt{J(x, y)}\right)g(x, y)dxdy\bigg| \\
 &+\bigg|\int\left(f_n(y)\sqrt{J_n(x, y)}-f(y)\sqrt{J(x, y)}\right)g(x, y)dxdy\bigg|=:\sI^1_n+\sI^2_n. 
\end{aligned}\]
We only need to show $\sI^1_n\rightarrow 0$ and then $\sI^2_n\rightarrow 0$ is analogical. Clearly, $\sI^1_n$ is not greater than
\[
\begin{aligned}
\bigg|\int & f_n(x)\psi_{\gamma_n}(x)\left(\sqrt{\frac{\psi_{\gamma_n}(y)}{\psi_{\gamma_n}(x)}}-\sqrt{\frac{\psi_{0}(y)}{\psi_{0}(x)}}\right)\frac{g(x, y)}{|x-y|^{\frac{d+\alpha}{2}}}dxdy\bigg| \\
&+ \bigg|\int \left(f_n(x)\psi_{\gamma_n}(x)-f(x)\psi_{0}(x)\right)\sqrt{\frac{\psi_{0}(y)}{\psi_{0}(x)}}\frac{g(x, y)}{|x-y|^{\frac{d+\alpha}{2}}} dxdy\bigg|=:\sI^{1,1}_n+\sI^{1,2}_n. 
\end{aligned}\]
It follows from Cauchy-Schwartz inequality and \eqref{EQ6GXY} that 
\begin{equation}\label{EQ6INF}
\begin{aligned}
\sI^{1,1}_n&\leq \|f_n\|_{H_n} \cdot \left(\int \left(\sqrt{\frac{w_{\gamma_n,0}(y)}{w_{\gamma_n,0}(x)}}-1 \right)^2\frac{\psi_0(y)}{\psi_0(x)}\frac{g(x,y)^2}{|x-y|^{d+\alpha}}dxdy\right)^{1/2} \\
&\leq \|f_n\|_{H_n} \|g\|_\infty R^{\frac{d+\alpha}{2}} \left(\int_{B(R)\times B(R)} \left(\sqrt{\frac{w_{\gamma_n,0}(y)}{w_{\gamma_n,0}(x)}}-1 \right)^2\frac{\psi_0(y)}{\psi_0(x)}dxdy\right)^{1/2},
\end{aligned}\end{equation}
where $w_{\gamma_n,0}=\psi_{\gamma_n}/\psi_0$ is positive and continuous on $\bR^d$ due to Lemma~\ref{LMB}. Note that $\sup_n\|f_n\|_{H_n}<\infty$ by Lemma~\ref{LMB4}~(3) and $\psi_0(y)/\psi_0(x)$ is clearly integrable on $B(R)\times B(R)$. In addition, $w_{\gamma_n,0}(y)/w_{\gamma_n,0}(x)$ is bounded on $B(R)\times B(R)$ and for every $x,y\in B(R)$, one can easily deduce from \eqref{EQAULXA} that
\[
	\lim_{\gamma_n\downarrow 0} \frac{w_{\gamma_n,0}(y)}{w_{\gamma_n,0}(x)}=1. 
\]
Hence by applying the dominated convergence theorem to the last term in \eqref{EQ6INF}, we obtain $\sI^{1,1}_n\rightarrow 0$. On the other hand, mimicking \eqref{EQ6INF}, one can figure out
\[
	x\mapsto \int_{\bR^d}\sqrt{\frac{\psi_{0}(y)}{\psi_{0}(x)}}\frac{g(x, y)}{|x-y|^{\frac{d+\alpha}{2}}}dy \in L^2(\bR^d). 
\]
Since $f_n\psi_{\gamma_n}\rightarrow f\psi_{0}$ weakly in $L^2(\mathbb{R}^{d})$ by Lemma~\ref{LMB4}~(4), we obtain $\sI^{1,2}_n\rightarrow 0$. Eventually for all $g\in C_c(\bR^d\times \bR^d\setminus D)$, it holds
\[
\bigg|\int\left(\bar{f}(x, y)-\tilde{f}(x,y)\right)g(x, y)dxdy\bigg|=0,
\]
which leads to $\bar{f}=\tilde{f}$. Therefore, (M1) is verified. 

Now we turn to verify (M2). When $f\in H\setminus \sF$, take $f_n:=f$. Since $f_n$ clearly converges to $f$ weakly in $\cH$, it follows from (M1) that
\[
	\infty=\sE(f)\leq \liminf_n \sE^n(f_n)=\lim_n\sE^n(f_n)=\infty. 
\] 
Next fix $f\in \sF$. Consider the Hilbert spaces $H':=\sF$ endowed with the norm $\|\cdot\|_{\sE_1}$ and $H'_n:=\sF^n$ endowed with the norm $\|\cdot\|_{\sE^n_1}$. It is straightforward to verify that $H'_n$ converges to $H'$ in the sense of Definition~\ref{DEFB1} by taking $C=C_c^\infty(\bR^d)$ and $\Phi_n=\text{id}$. Applying Lemma~\ref{LMB4}~(1) to $\cH':=\{H'_n,H':n\geq 1\}$, we can take a sequence $f_n\in H'_n$ such that $f_n$ converges to $f$ strongly in $\cH'$. Particularly, $f_n$ also converges to $f$ strongly in $\cH$. Consequently, it follows from Lemma~\ref{LMB4}~(2) that
\[
	\|f_n\|_{H_n}\rightarrow \|f\|_H,\quad \|f_n\|_{H'_n}\rightarrow \|f\|_{H'}. 
\]
As a result, 
\[
	\lim_n \sE^n(f_n,f_n)=\lim_n \sE^n_1(f_n,f_n)-\lim_n \|f_n\|_{H_n}^2=\sE_1(f,f)-\|f\|^2_H=\sE(f,f),
\]
which leads to (M2). That completes the proof.
\end{proof}

Following Theorem~\ref{THMB}, we can conclude the convergence of associated semigroups and resolvents. Recall that $Q^n_t:=Q^{(\gamma_n)}_t$ and $R^n_\lambda:=R^{(\gamma_n)}_\lambda$ are the semigroup and resolvent of $(\sE^n,\sF^n)$ respectively. Both $Q^n_t$ and $R^n_\lambda$ ($t\geq 0,\lambda>0$) are bounded linear operators on $H_n$. Set $Q_t:=Q^{(0)}_t$ and $R_\lambda:=R^{(0)}_\lambda$ further. Note that $R^n_\lambda f\in \sF^n$ and $R_\lambda f\in \sF\subset \sF^n$ for all $f\in H\subset H_n$. 

\begin{corollary}\label{COR63}
For any $t\geq 0$ and $\lambda>0$, $Q^n_t\rightarrow Q_t$ and $R^n_\lambda\rightarrow R_\lambda$ as $n\rightarrow \infty$ in the sense of Definition~\ref{DEFB3}~(3). Furthermore, for any $\lambda>0$ and $f\in H$, 
\begin{equation}\label{EQ6NEN}
\lim_{n\rightarrow\infty}\sE^n(R^n_\lambda f-R_\lambda f, R^n_\lambda f-R_\lambda f)=0.
\end{equation}
\end{corollary}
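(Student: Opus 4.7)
The first assertion is an immediate consequence of Theorem~\ref{THMB} in Appendix~\ref{SECB}, which asserts the equivalence of Mosco convergence of a sequence of Dirichlet forms on a convergent sequence $\cH$ of Hilbert spaces and the strong convergence (in the sense of Definition~\ref{DEFB3}(3)) of their associated semigroups and resolvents. So $Q^n_t\to Q_t$ and $R^n_\lambda\to R_\lambda$ follow directly from the Mosco convergence established in Theorem~\ref{THM63}. The only real work lies in verifying \eqref{EQ6NEN}.

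My plan for \eqref{EQ6NEN} is a three-term expansion of $\sE^n_\lambda(R^n_\lambda f-R_\lambda f,R^n_\lambda f-R_\lambda f)$, where $\sE^n_\lambda(\cdot,\cdot):=\sE^n(\cdot,\cdot)+\lambda(\cdot,\cdot)_{H_n}$. Since $u_\lambda\uparrow u_0$ as $\lambda\downarrow 0$ by \eqref{EQ2UXP}, the weights satisfy $\psi_{\gamma_n}\uparrow \psi_0$, which yields $\fm_{\gamma_n}\leq \fm_0$ and $\sE^n\leq \sE$ on $\sF$. Consequently $\sF\subset \sF^n$, and $R_\lambda f\in \sF\subset \sF^n$ is a legitimate test function in the defining identity $\sE^n_\lambda(R^n_\lambda f,g)=(f,g)_{H_n}$ for $g\in \sF^n$. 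The expansion then reduces to
\begin{align*}
\sE^n_\lambda(R^n_\lambda f-R_\lambda f,R^n_\lambda f-R_\lambda f) &= \sE^n_\lambda(R^n_\lambda f,R^n_\lambda f)-2\sE^n_\lambda(R^n_\lambda f,R_\lambda f)+\sE^n_\lambda(R_\lambda f,R_\lambda f) \\
&= (f,R^n_\lambda f)_{H_n}-2(f,R_\lambda f)_{H_n}+\sE^n_\lambda(R_\lambda f,R_\lambda f).
\end{align*}

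I would then pass to the limit in each of the three terms separately and show each converges to the common value $\sE_\lambda(R_\lambda f,R_\lambda f)=(f,R_\lambda f)_H$, so that the sum tends to $0$. The second term tends to $(f,R_\lambda f)_H$ by dominated convergence, using the pointwise monotonicity $\psi_{\gamma_n}\uparrow \psi_0$ and the dominator $|f\cdot R_\lambda f|\psi_0^2\in L^1(\bR^d)$; the third term tends to $\sE_\lambda(R_\lambda f,R_\lambda f)$ by monotone convergence applied to both the jump-kernel integrand and the $L^2$-norm; for the first term, strong convergence $R^n_\lambda f\to R_\lambda f$ in $\cH$ combined with Lemma~\ref{LMB4}(4) gives $\psi_{\gamma_n}\cdot R^n_\lambda f\to \psi_0\cdot R_\lambda f$ in $L^2(\bR^d,dx)$, while $\psi_{\gamma_n}\cdot f\to \psi_0\cdot f$ in $L^2(\bR^d,dx)$ by dominated convergence, so $(f,R^n_\lambda f)_{H_n}=\int (f\psi_{\gamma_n})(R^n_\lambda f\cdot\psi_{\gamma_n})dx\to (f,R_\lambda f)_H$ by the continuity of the $L^2(\bR^d,dx)$-inner product under strong convergence of both factors.

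A parallel expansion of $\|R^n_\lambda f-R_\lambda f\|_{H_n}^2$ using the same three ingredients yields $\|R^n_\lambda f-R_\lambda f\|_{H_n}\to 0$, and subtracting $\lambda$ times this from the previous limit produces \eqref{EQ6NEN}. The main obstacle is not conceptually deep but a matter of carefully tracking three slightly different $L^2$-type convergences under the varying weight $\psi_{\gamma_n}^2$; the pointwise monotonicity $\psi_{\gamma_n}\uparrow \psi_0$ together with Lemma~\ref{LMB4} and the consequence of Mosco convergence makes every single convergence routine, so no substantially new idea is needed beyond Theorem~\ref{THM63}.
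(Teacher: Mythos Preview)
Your proof is correct and follows essentially the same approach as the paper. The paper expands $\sE^n(R^n_\lambda f-R_\lambda f,R^n_\lambda f-R_\lambda f)$ directly into five terms, while you work with $\sE^n_\lambda$ to obtain a cleaner three-term expression and then subtract the $H_n$-norm contribution separately; this is only a cosmetic difference in bookkeeping, and the individual limits you invoke (dominated/monotone convergence for the fixed function $R_\lambda f$, Lemma~\ref{LMB4} together with the strong convergence $R^n_\lambda f\to R_\lambda f$ in $\cH$ for the mixed terms) are exactly the ones the paper uses.
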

\begin{proof}
It suffices to prove \eqref{EQ6NEN}. Note that $\sE^n(R^n_\lambda f-R_\lambda f, R^n_\lambda f-R_\lambda f)$ is equal to
\[
	\sE^n(R_\lambda f,R_\lambda f)-\lambda \|R^n_\lambda f\|_{H_n}^2+(f,R^n_\lambda f)_{H_n}-2(R_\lambda f,f)_{H_n}+2\lambda (R_\lambda f,R^n_\lambda f)_{H_n}. 
\]
Since $R^n_\lambda f$ converges to $R_\lambda f$ in $\cH$, it follows from Lemma~\ref{LMB4} that $\|R^n_\lambda f\|_{H_n}\rightarrow \|R_\lambda f\|_{H}$, $(f,R^n_\lambda f)_{H_n}\rightarrow (f,R_\lambda f)_{H}$ and $(R_\lambda f,R^n_\lambda f)_{H_n}\rightarrow (R_\lambda f,R_\lambda f)_{H}$. In addition, the dominated convergence theorem yields $\sE^n(R_\lambda f,R_\lambda f)\rightarrow \sE(R_\lambda f,R_\lambda f)$ and $(R_\lambda f,f)_{H_n}\rightarrow (R_\lambda f,f)_{H}$. Finally we can conclude that
\[
\lim_{n\rightarrow\infty}\sE^n(R^n_\lambda f-R_\lambda f, R^n_\lambda f-R_\lambda f)=\sE(R_\lambda f-R_\lambda f, R_\lambda f-R_\lambda f)=0.
\]
That completes the proof. 
\end{proof}

\subsection{Proof of Theorem~\ref{THM61}}

%To show the tightness of $\{X^n\}$, we introduce a new probability measure. For every bounded Borel measurable function $\phi$ with compact support $K$ such that $\|\phi\|_H=1$, we define a new probability measure
%$$\mathbb{P}_\phi(\cdot) :=\int_{\mathbb{R}^3}\mathbb{P}_x(\cdot)\phi(x)\sigma(x)^2dx.$$
%$L_n$ and $L_0$ are infinitesimal generators of $(\mathscr{E}^n, \mathscr{F}^n)$ and $(\mathscr{E}^0, \mathscr{F}^0)$ respectively. Let $G_\lambda^n$ and $G_\lambda^0$ be $\lambda$-resolvent of $L_n$ and $L_0$ respectively for every $\lambda>0$. Without particular explanations, $\phi$ is a function given as above.

Denote the $1$-capacity of $(\sE^n,\sF^n)$ (resp. $(\sE,\sF)$) by $\text{Cap}^n$ (resp. $\text{Cap}$). Note that $\text{Cap}^n(A)\leq \text{Cap}(A)$ for any Borel set $A\subset \bR^d$. Let us prepare a simple lemma as below.

\begin{lemma}\label{LM64}
For any nearly Borel measurable set $G$ with $\text{Cap}^n(G)<\infty$, set $\sigma^n_G:=\{t>0:X^n_t\in G\}$. Then it holds
\[
	\bfQ^\phi(\mathrm{e}^{-\sigma^n_G})\leq C_\phi \text{Cap}^n(G)^{1/2},
\]
where $C_\phi:=\|\phi/\psi_{\gamma_1}\|_{L^2(\bR^d)}$ is a finite constant independent of $n$. 
\end{lemma}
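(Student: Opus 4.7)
The plan is to recognize the quantity $\mathbf{E}^x_n[\mathrm{e}^{-\sigma^n_G}]$ as the $1$-equilibrium potential of $G$, apply Cauchy--Schwarz in $H_n$ to convert to the natural $H_n$-norm, and then exploit the monotonicity $\gamma\mapsto\psi_\gamma$ to replace $\psi_{\gamma_n}$ by the uniform lower bound $\psi_{\gamma_1}$, which accounts for the particular shape of $C_\phi$.

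First I would write
\[
\bfQ^\phi(\mathrm{e}^{-\sigma^n_G})=\int_{\bR^d}\phi(x)\,\mathbf{E}^x_n\!\bigl[\mathrm{e}^{-\sigma^n_G}\bigr]\,dx,
\]
and identify $e^n_G(x):=\mathbf{E}^x_n[\mathrm{e}^{-\sigma^n_G}]$ as the $1$-equilibrium potential of $G$ relative to $(\sE^n,\sF^n)$; this belongs to $\sF^n$ and satisfies $\sE^n_1(e^n_G,e^n_G)=\mathrm{Cap}^n(G)$ by \cite[\S2.1]{bib11}. The identification is valid under the integration because $\phi(x)dx$ is absolutely continuous with respect to Lebesgue measure, hence does not charge any $\sE^n$-polar set (such sets are Lebesgue null since $\fm_{\gamma_n}$ is equivalent to Lebesgue away from the single point $0$).

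Next I would convert the Lebesgue integral into an $H_n$-inner product and apply Cauchy--Schwarz:
\[
\int\phi\,e^n_G\,dx=\Bigl(\phi/\psi_{\gamma_n}^{2},\,e^n_G\Bigr)_{H_n}\leq \bigl\|\phi/\psi_{\gamma_n}^{2}\bigr\|_{H_n}\cdot\|e^n_G\|_{H_n}\leq \bigl\|\phi/\psi_{\gamma_n}^{2}\bigr\|_{H_n}\cdot\mathrm{Cap}^n(G)^{1/2},
\]
where the last inequality uses the elementary bound $\|e^n_G\|_{H_n}^{2}\leq \|e^n_G\|_{\sE^n_1}^{2}=\mathrm{Cap}^n(G)$.

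The only remaining point is to bound $\|\phi/\psi_{\gamma_n}^{2}\|_{H_n}$ uniformly in $n$. A direct computation gives $\|\phi/\psi_{\gamma_n}^{2}\|_{H_n}^{2}=\int \phi^{2}/\psi_{\gamma_n}^{2}\,dx$. Since the assignment $\gamma\mapsto\lambda_\gamma=(\gamma/c(\alpha,d))^{\alpha/(d-\alpha)}$ is strictly increasing and $\gamma_n\leq \gamma_1$, we have $\lambda_{\gamma_n}\leq\lambda_{\gamma_1}$; and since $u_\lambda=\int_{0}^{\infty}e^{-\lambda t}p_{t}\,dt$ is patently decreasing in $\lambda$, we obtain the pointwise bound $\psi_{\gamma_n}=u_{\lambda_{\gamma_n}}\geq u_{\lambda_{\gamma_1}}=\psi_{\gamma_1}$. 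Therefore $\phi^{2}/\psi_{\gamma_n}^{2}\leq \phi^{2}/\psi_{\gamma_1}^{2}$, and the assumption \eqref{EQ6RDP} yields $\|\phi/\psi_{\gamma_n}^{2}\|_{H_n}\leq \|\phi/\psi_{\gamma_1}\|_{L^{2}(\bR^d)}=C_\phi$. Combining the three estimates gives the claim. I do not foresee any real obstacle; the content of the lemma is essentially that the choice of $\psi_{\gamma_1}$ (rather than $\psi_{\gamma_n}$ or $\psi_0$) in the definition of $C_\phi$ is precisely what makes the bound uniform in $n$.
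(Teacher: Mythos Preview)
Your proof is correct and follows essentially the same route as the paper: identify $e^n_G(x)=\mathbf{E}^x_n[\mathrm{e}^{-\sigma^n_G}]$ as the $1$-equilibrium potential with $\sE^n_1(e^n_G,e^n_G)=\mathrm{Cap}^n(G)$, rewrite the integral as the $H_n$-pairing with $\phi/\psi_{\gamma_n}^2$, and use the monotonicity $\psi_{\gamma_n}\geq\psi_{\gamma_1}$ to arrive at $C_\phi$.

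There is one minor stylistic difference worth noting. The paper applies Cauchy--Schwarz in the $\sE^n_1$-inner product after writing $(e^n_G,\phi/\psi_{\gamma_n}^2)_{H_n}=\sE^n_1(e^n_G,R^n_1(\phi/\psi_{\gamma_n}^2))$, and then bounds $\sE^n_1(R^n_1 g,R^n_1 g)=(g,R^n_1 g)_{H_n}\leq\|g\|_{H_n}^2$ via the semigroup contraction $(g,Q^n_t g)_{H_n}\leq\|g\|_{H_n}^2$. You instead apply Cauchy--Schwarz directly in $H_n$ and use the trivial inequality $\|e^n_G\|_{H_n}^2\leq\sE^n_1(e^n_G,e^n_G)$. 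Both yield exactly the same final bound $\|\phi/\psi_{\gamma_n}\|_{L^2(\bR^d)}\cdot\mathrm{Cap}^n(G)^{1/2}$; your version is a step shorter and avoids the resolvent/semigroup detour.
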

\begin{proof}
Set $w_1(\cdot):=\bfQ^\cdot (\mathrm{e}^{-\sigma^n_G})$. It follows from \cite[Theorem~4.2.5]{bib11} that $w_1$ is a quasi-continuous function in $\sF^n$. Clearly $\phi/\psi_{\gamma_n}^2\in H_n$ by \eqref{EQ6RDP} and Remark~\ref{RM62}. Applying \cite[Theorem~2.1.5]{bib11}, we can deduce that
\[
\begin{aligned}
\bfQ^\phi(\mathrm{e}^{-\sigma^n_G})&=\int_{\bR^d}w_1(x)\frac{\phi(x)}{\psi_{\gamma_n}(x)^2}\fm_{\gamma_n}(dx)=\sE^n_1(w_1, R^n_1\left(\phi/\psi_{\gamma_n}^2\right)) \\
&\leq \sE^n_1(R^n_1(\phi/\psi_{\gamma_n}^2),R^n_1(\phi/\psi_{\gamma_n}^2))^{1/2}\cdot \sE^n_1(w_1,w_1)^{1/2} \\
&=\left(\int_{\bR^d} \phi(x)R^n_1(\phi/\psi_{\gamma_n}^2)(x)dx\right)^{1/2} \text{Cap}^n(G)^{1/2}. 
\end{aligned}\]	
It suffices to show $C_{\phi,n}:=\left(\int_{\bR^d} \phi(x)R^n_1(\phi/\psi_{\gamma_n}^2)(x)dx\right)^{1/2}\leq C_\phi$. Indeed, it follows from $\phi/\psi_{\gamma_n}^2\in H_n$ that
\[
\begin{aligned}
C_{\phi,n}^2 &=(\phi/\psi_{\gamma_n}^2, R^n_1(\phi/\psi_{\gamma_n}^2))_{H_n}=\int_0^\infty \mathrm{e}^{-t}\cdot (\phi/\psi_{\gamma_n}^2, Q^n_t(\phi/\psi_{\gamma_n}^2))_{H_n} dt \\ 
&\leq (\phi/\psi_{\gamma_n}^2, \phi/\psi_{\gamma_n}^2)_{H_n}=\int \left(\phi/\psi_{\gamma_n}\right)^2(x)dx\leq \int \left(\phi/\psi_{\gamma_1}\right)^2(x)dx=C_\phi^2.
\end{aligned}\]
The first inequality is led by
\[
(\phi/\psi_{\gamma_n}^2, Q^n_t(\phi/\psi_{\gamma_n}^2))_{H_n}=(Q^n_{t/2}(\phi/\psi_{\gamma_n}^2), Q^n_{t/2}(\phi/\psi_{\gamma_n}^2))_{H_n}\leq (\phi/\psi_{\gamma_n}^2, \phi/\psi_{\gamma_n}^2)_{H_n}.  
\] 
That completes the proof. 
\end{proof}

We pursue the proof of Theorem~\ref{THM61}. The idea of it is due to \cite{bib4} and the crucial fact is that $\psi_{\gamma_n}$ is monotone in $n$. 

\begin{proof}[Proof of Theorem~\ref{THM61}]
\emph{Step 1.} We first show for any $\lambda, T>0$ and any bounded $h\in H$, 
\begin{equation}\label{EQ6NQP2}
\lim_{n\rightarrow \infty} \bfQ^\phi\left[\sup_{t\in [0,T]} \left|R^n_\lambda h(X^n_t)-R_\lambda h(X^n_t)\right|\right]=0.
\end{equation}
Since $R^n_\lambda h, R_\lambda h\in \sF^n$, suppose they are taken to be $\sE^n$-quasi-continuous versions still denoted by $R^n_\lambda h$ and $R_\lambda h$. Moreover, $R^n_\lambda h-R_\lambda h\in \sF^n$ leads to $|R^n_\lambda h-R_\lambda h|\in \sF^n$ and $\sE^n(|R^n_\lambda h-R_\lambda h|,|R^n_\lambda h-R_\lambda h|)\leq \sE^n(R^n_\lambda h-R_\lambda h,R^n_\lambda h-R_\lambda h)$. Fix an arbitrary small constant $\varepsilon>0$. Set $G^n_\varepsilon:=\{x: |R^n_\lambda h(x)-R_\lambda h(x)|>\varepsilon\}$. Then $G^n_\varepsilon$ is $\sE^n$-q.e. finely open with 
\[
	\text{Cap}^n(G^n_\varepsilon)\leq \frac{1}{\varepsilon^2}\sE^n(|R^n_\lambda h-R_\lambda h|,|R^n_\lambda h-R_\lambda h|)\leq \frac{1}{\varepsilon^2}\sE^n(R^n_\lambda h-R_\lambda h,R^n_\lambda h-R_\lambda h).
\]
Further set $\sigma_{G^n_\varepsilon}:=\{t>0: X_t^n\in G_\varepsilon^n\}$. Then
\begin{equation}\label{EQ6QPT}
\bfQ^\phi\left[\sup_{t\in [0,T]} \left|R^n_\lambda h(X^n_t)-R_\lambda h(X^n_t)\right|; T<\sigma_{G^n_\varepsilon}\right]\leq \varepsilon\cdot \bfQ^\phi (T<\sigma_{G^n_\varepsilon})\leq \varepsilon,
\end{equation}
and it follows from $\|R^n_\lambda h\|_\infty \leq \frac{1}{\lambda}\|h\|_\infty$ and   $\|R_\lambda h\|_\infty \leq \frac{1}{\lambda}\|h\|_\infty$ that
\[
\begin{aligned}
\bfQ^\phi\bigg[\sup_{t\in [0,T]}& \left|R^n_\lambda h(X^n_t)-R_\lambda h(X^n_t)\right|; T\geq \sigma_{G^n_\varepsilon}\bigg]\\ 
&\leq \frac{2\|h\|_\infty}{\lambda} \bfQ^\phi(T\geq \sigma_{G^n_\varepsilon}) \leq \frac{2\|h\|_\infty \mathrm{e}^T}{\lambda} \bfQ^\phi(\mathrm{e}^{-\sigma_{G^n_\varepsilon}}). 
\end{aligned}\]
Applying Lemma~\ref{LM64} to $G^n_\varepsilon$, we obtain the above term is not greater than
\begin{equation}\label{EQ6HET}
\frac{2\|h\|_\infty \mathrm{e}^T C_\phi}{\lambda}\text{Cap}^n(G^n_\varepsilon)^{1/2}\leq \frac{2\|h\|_\infty \mathrm{e}^T C_\phi}{\lambda}\frac{\sE^n(R^n_\lambda h-R_\lambda h,R^n_\lambda h-R_\lambda h)^{1/2}}{\varepsilon}.
\end{equation}
By \eqref{EQ6NEN}, there exists $N\in \bN$ such that for all $n>N$, 
\[
\frac{2\|h\|_\infty \mathrm{e}^T C_\phi}{\lambda}\cdot \sE^n(R^n_\lambda h-R_\lambda h,R^n_\lambda h-R_\lambda h)^{1/2}\leq \varepsilon^2. 
\] 
As a result, \eqref{EQ6QPT} and \eqref{EQ6HET} yield 
\[
	\bfQ^\phi\left[\sup_{t\in [0,T]} \left|R^n_\lambda h(X^n_t)-R_\lambda h(X^n_t)\right|\right]\leq 2\varepsilon,\quad \forall n>N. 
\]
This leads to \eqref{EQ6NQP2}. 

\emph{Step 2.} Fix $g\in C_c(\bR^d)$ and $T,\varepsilon>0$. We claim that there exists $h\in C_c(\bR^d)$, a constant $\lambda_0>0$ and an integer $N$ such that 
\begin{equation}\label{EQ6NNQ}
	\sup_{n\geq N}\mathbf{Q}^\phi\bigg[ \sup_{t\in[0,T]}\left|\lambda_0R_{\lambda_0}^nh(X_t^n)-g(X_t^n)\right|\bigg] <\varepsilon.
\end{equation}
To this end, take $h\in \sF\cap C_c(\bR^d)$ such that $\|h\|_\infty\leq 2\|g\|_\infty$ and 
\begin{equation}\label{EQ6XRD}
	\sup_{x\in\bR^d}|h(x)-g(x)|<\varepsilon/3. 
\end{equation}
The existence of $h$ is due to the regularity of $(\sE,\sF)$. Note that $\lambda R_\lambda h\in \sF$ is $\sE$-quasi-continuous and $\lambda R_\lambda h$ converges to $h$ strongly in $\sF$ endowed with the $\sE_1$-norm by \cite[Lemma~1.3.3]{bib11}. Thus applying \cite[Theorem~1.3.3 and Exercise~1.3.16]{bib19}, we can take a $\text{Cap}$-nest $\{F_m:m\geq 1\}$, i.e. $F_m$ is a sequence of increasing closed sets such that $\lim_{m\rightarrow \infty}\text{Cap}(F^c_m)=0$, such that $\lambda R_\lambda h$ converges to $h$ uniformly on each $F_m$. Take $m_0\in \bN$ such that 
\begin{equation}\label{EQ6CFC}
	\text{Cap}(F^c_{m_0})\leq \frac{\varepsilon^2}{\left(24\|g\|_\infty \mathrm{e}^T C_\phi\right)^2},
\end{equation}
where $C_\phi$ is the constant in Lemma~\ref{LM64} and further take $\lambda_0$ such that 
\begin{equation}\label{EQ6XFM}
	\sup_{x\in F_{m_0}}|\lambda_0 R_{\lambda_0}h(x)-h(x)|<\frac{\varepsilon}{6}. 
\end{equation}
Finally by virtue of \eqref{EQ6NQP2} for $\lambda_0, T$ and $h$, take $N\in \bN$ such that
\begin{equation}\label{EQ6NNQ2}
	\sup_{n\geq N}\bfQ^\phi\left[\sup_{t\in [0,T]} \left|R^n_{\lambda_0} h(X^n_t)-R_{\lambda_0} h(X^n_t)\right|\right]< \frac{\varepsilon}{3\lambda_0}. 
\end{equation}
With $h$, $\lambda_0$ and $N$ in hand, we pursue to verify \eqref{EQ6NNQ}. Clearly, 
\begin{equation}\label{EQ6LRN}
	|\lambda_0 R^n_{\lambda_0} h-g|\leq \lambda_0 |R^n_{\lambda_0}h-R_{\lambda_0}h| + |\lambda_0 R_{\lambda_0}h-h|+|h-g|. 
\end{equation}
From \eqref{EQ6XRD}, we obtain 
\begin{equation}\label{EQ6QPTT}
\bfQ^\phi\left[\sup_{t\in [0,T]} \left|h(X^n_t)-g(X^n_t)\right|\right]<\frac{\varepsilon}{3}
\end{equation}
for all $n$. For every $n$, set $\sigma_{m_0}^n:=\inf\{t>0:X_t^n\in F^c_{m_0}\}$. Then it follows from \eqref{EQ6XFM} that 
\[
	\bfQ^\phi\left[\sup_{t\in [0,T]} \left|\lambda_0R_{\lambda_0}h(X^n_t)-h(X^n_t)\right|; T<\sigma^n_{m_0}\right] < \frac{\varepsilon}{6}. 
\]
In addition, mimicking the first step, one can deduce from $\|h\|_\infty\leq 2\|g\|_\infty$, $\text{Cap}^n\leq \text{Cap}$ and \eqref{EQ6CFC} that
\[
\begin{aligned}
\bfQ^\phi\bigg[\sup_{t\in [0,T]}& \left|\lambda_0R_{\lambda_0}h(X^n_t)-h(X^n_t)\right|; T\geq \sigma^n_{m_0}\bigg] \\
 &\leq \left(\|\lambda_0R_{\lambda_0}h\|_\infty +\|h\|_\infty\right)\bfQ^\phi(T\geq \sigma^n_{m_0}) \\
 &\leq 2\|h\|_\infty \mathrm{e}^T C_\phi \text{Cap}^n(F^c_{m_0})^{1/2} \\
 &\leq 4\|g\|_\infty \mathrm{e}^T C_\phi \text{Cap}(F^c_{m_0})^{1/2}<\frac{\varepsilon}{6}. 
\end{aligned}\]
Consequently, for all $n$,
\begin{equation}\label{EQ6QHT}
\bfQ^\phi\left[\sup_{t\in [0,T]} \left|\lambda_0R_{\lambda_0}h(X^n_t)-h(X^n_t)\right|\right]<\frac{\varepsilon}{3}. 
\end{equation}
Eventually, \eqref{EQ6NNQ2}, \eqref{EQ6LRN}, \eqref{EQ6QPTT} and \eqref{EQ6QHT} yield  \eqref{EQ6NNQ}. 

\emph{Step 3.} In this step, we will demonstrate that for any $m\geq 1$ and $g_i\in C_c(\bR^d)$ for $1\leq i\leq m$,
\begin{equation}\label{EQ6XNG}
	\{\xi_n:=\left(g_1(X^n),\cdots, g_m(X^n)\right): n\geq 1\}
\end{equation}
under $\bfQ^\phi$ forms a tight family on $D([0,\infty),\bR^m)$, i.e. the Skorohod topology space on $\bR^m$, by virtue of \cite[Theorem~3.9.4 and Remark~3.9.5~(b)]{bib12}. More precisely, 
\[
	\xi_n: \Xi\rightarrow D([0,\infty),\bR^m),\quad \varpi\mapsto \xi_n\varpi,
\]
where $\xi_n\varpi(t):=\left(g_1(X^n_t(\varpi)),\cdots, g_m(X^n_t(\varpi))\right)\in \bR^m$ is clearly c\`adl\`ag in $t$, induces a probability measure $\bfQ^\phi\circ \xi_n^{-1}$ on $D([0,\infty),\bR^m)$. Our object is to show $\{\bfQ^\phi\circ \xi_n^{-1}: n\geq 1\}$ is tight. It suffices to consider $m=1$ and write $g:=g_1$ for the sake of brevity. To this end, fix $\varepsilon, T>0$. Apply Step 2 to these $g, \varepsilon, T$ and take $h, \lambda_0, N$ such that \eqref{EQ6NNQ} holds. Set
\[
	Y^n_t:=\lambda_0 R^n_{\lambda_0}h(X^n_t),\quad Z^n_t:=\lambda_0\left(\lambda_0R^n_{\lambda_0} h-h \right)(X_t^n). 
\]
From the Fukushima's decomposition of $X^n$ with respect to $\lambda_0 R^n_{\lambda_0}h$ (see \cite[Theorem~5.2.5]{bib11}), one can find that 
\[
	t\mapsto Y^n_t-\int_0^t Z^n_sds
\]
is a martingale relative to the filtration of $X^n$. In addition, \eqref{EQ6NNQ} tells us
\[
\sup_{n\geq N} \bfQ^\phi \left[\sup_{t\in [0,T]}\left|Y^n_t-g(X^n_t) \right| \right]<\varepsilon. 
\]
Furthermore, since $\|h\|_\infty\leq 2\|g\|_\infty<\infty$, it follows that
\[
\sup_{n\geq 1}\bfQ^\phi\left[\sup_{t\in [0,T]}|Z^n_t|\right]\leq 2\lambda_0 \|h\|_\infty<\infty. 
\]
Eventually, \cite[Theorem~3.9.4 and Remark~3.9.5~(b)]{bib12} yield the desirable tightness. 

\emph{Step 4.} Finally, we shall conclude that $\xi_n$ in \eqref{EQ6XNG} is weakly convergent to $\xi:=(g_1(X),\cdots,g_m(X))$ under $\bfQ^\phi$. Since $C_c(\bR^d)$ strongly separates points in $\bR^d$ (for the definition of strong separation, see \cite[\S3.4]{bib12}; for the proof of this fact, see \cite{bib25}), this convergence leads to  \eqref{EQ6NQP} due to \cite[Corollary~3.9.2]{bib12}. To show $\xi_n$ weakly converges to $\xi$, note that $\{\xi_n:n\geq 1\}$ is tight by the third step. Thus it suffices to show the finite dimensional distributions of $\xi_n$ are weakly convergent to those of $\xi$ by employing \cite[Theorem~3.7.8]{bib12}. To this end, take $f_1\in C_b(\bR^m)$ and set $h_1:=f_1\circ (g_1,\cdots, g_m)\in C_b(\bR^d)$. Note that $C:=f_1(0)$ is not necessarily equal to $0$. For every $x\notin \cup_{1\leq i\leq m}\text{supp}[g_i]$, we have $h_1(x)=C$. Then $\tilde{h}_1:=h_1-C$ defines a continuous function with compact support and thus $\tilde{h}_1\in H$. For any $t_1>0$, 
\[
	\bfQ^\phi(h_1(X^n_{t_1}))=\bfQ^\phi(\tilde{h}_1(X^n_{t_1}))+C=\left(Q^n_{t_1}\tilde{h}_1, \frac{\phi}{\psi_{\gamma_n}^2}\right)_{H_n}+C. 
\]
Corollary~\ref{COR63} tells us $Q^n_{t_1} \tilde{h}_1$ converges to $Q_{t_1}\tilde{h}_1$ strongly (as well as weakly due to Remark~\ref{RMB5}) in $\cH$. Clearly, $\phi/\psi_{\gamma_n}^2$ converges to $\phi/\psi_0^2$ strongly in $\cH$ by means of Lemma~\ref{LMB4}~(4) and $\phi/\psi_{\gamma_1}\in L^2(\bR^d)$. As a result, 
\[
	\bfQ^\phi(h_1(X^n_{t_1}))\rightarrow \left(Q_{t_1}\tilde{h}_1, \frac{\phi}{\psi_{0}^2}\right)_{H}+C=\bfQ^\phi(h_1(X_{t_1})). 
\]
Mimicking the above argument, one can obtain by induction that for $t_1<\cdots< t_k$, $f_1,\cdots, f_k\in C_b(\bR^m)$, it still holds (see \cite[Theorem~3.7]{bib4} for more details)
\[
	\lim_{n\rightarrow 0}\mathbf{Q}^\phi\big[ h_1(X_{t_1}^n)\cdots h_k(X_{t_k}^n)\big ]=\mathbf{Q}^\phi\big[ h_1(X_{t_1})\cdots h_k(X_{t_k})\big ],
\]
where $h_i:=f_i\circ (g_1,\cdots, g_m)$. In other words, the finite dimensional distribution of $\xi_n$ at $(t_1,\cdots, t_k)$ is weakly convergent to that of $\xi$. That completes the proof. 
\end{proof}

\appendix

\section{Basics of isotropic $\alpha$-stable process}

Fix $0<\alpha<2\wedge d$. The generator of isotropic $\alpha$-stale process is denoted by $\Delta^{\alpha/2}$ (on $L^2(\bR^d)$), whose definition is usually given by Fourier transform. 
We present two alternative equivalent definitions of $\Delta^{\alpha/2}$ as follows. Note that $p_t(x):=p(t,0,x)$ stands for its transition density, which enjoys the following properties:
\begin{equation}\label{EQAPTX}
p_t(x)=t^{-d/\alpha}p_1(x/t^{1/\alpha}),\quad t>0, x\neq 0,
\end{equation}
and 
\begin{equation}\label{EQAPCR}
	p_1\in C^\infty(\bR^d)\cap \mathcal{B}_+(\bR^d),\quad p_1(x) \approx 1\wedge |x|^{-d-\alpha};
\end{equation}
see e.g. \cite[\S2.6]{bib20}.

\begin{definition}\label{DEFA1}
Recall that the constant $c_{\alpha,d}$ is given in \S\ref{SEC2}. Define
\[
	L_If:=\lim_{r\downarrow 0} c_{\alpha, d} \int_{y: |y|>r} \frac{f(\cdot+y)-f(\cdot)}{|y|^{d+\alpha}}dy
\]
with the limit in $L^2(\bR^d)$, where the domain $\cD(L_I)$ consists of functions such that this limit exists. In addition, define
\[
	L_Sf:= \lim_{t\downarrow 0}\frac{1}{t}\left(\int_{\bR^d}p_t(\cdot-y)f(y)dy-f(\cdot) \right)
\]
with the limit in $L^2(\mathbb{R}^d)$, where the domain $\cD(L_S)$ consists of functions such that this limit exists. 
\end{definition}
\begin{remark}
Note that $L_I=L_S=\Delta^{\alpha/2}$ with $\cD(L_I)=\cD(L_S)=\cD(\Delta^{\alpha/2})=H^\alpha(\bR^d)$, where $H^\alpha(\bR^d)$ is the Sobolev space of order $\alpha$.
\end{remark}

Let $u_\lambda$ given by \eqref{EQ2ULX} be the resolvent density of isotropic $\alpha$-stable process. The following lemma summarizes some crucial properties of $u_\lambda$. Though it is elementary, we present a proof for readers' convenience. 

\begin{lemma}\label{LMB}
 Fix $\lambda\geq 0$ and let $u_\lambda$ be in \eqref{EQ2ULX} or \eqref{EQ2UXP}. Then the following hold:
\begin{itemize}
\item[(1)] $u_\lambda\in C^\infty(\bR^d\setminus \{0\})$ and $u_\lambda(x)>0$ for all $x\in \bR^d\setminus \{0\}$.
\item[(2)] For any $\mu\geq 0$, 
\begin{equation}\label{EQAXUL}
	\lim_{x\rightarrow 0}\frac{u_{\lambda+\mu}(x)}{u_\lambda(x)}=1. 
\end{equation}
Particularly, $w_{\lambda, \mu}(x):=u_{\lambda+\mu}(x)/u_\mu(x)$ for $x\neq 0$ and $w_{\lambda, \mu}(0):=1$ form a continuous function on $\bR^d$. 
\item[(3)] The following limit holds uniformly on any compact subset of $\bR^d\setminus \{0\}$: 
\[
	\lim_{t\downarrow 0} \frac{1}{t}\left(p_t\ast u_\lambda -u_\lambda\right)=\lambda u_\lambda. 
\]
\end{itemize}
\end{lemma}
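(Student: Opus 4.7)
The strategy is to derive all three claims directly from the integral representation $u_\lambda(x)=\int_0^\infty e^{-\lambda t}p_t(x)dt$, leveraging the scaling \eqref{EQAPTX} and the sharp estimate \eqref{EQAPCR}. For part (1), I will exploit the pointwise bound $p_t(x)\lesssim t\cdot|x|^{-d-\alpha}$ for small $t$ and $x$ in a compact subset $K\subset\bR^d\setminus\{0\}$ bounded away from $0$ (obtained from the scaling together with the decay $p_1(y)\lesssim |y|^{-d-\alpha}$ for large $|y|$), combined with the trivial bound $p_t(x)\leq p_t(0)\lesssim t^{-d/\alpha}$ for the large-$t$ regime. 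These two bounds yield an integrable dominant in $t$, uniformly on $K$ (using $\alpha<d$ when $\lambda=0$). Applying the same estimate to the derivatives $\partial^\beta p_t(x)=t^{-(d+|\beta|)/\alpha}\partial^\beta p_1(x/t^{1/\alpha})$, whose scaled argument again lies in the region where $p_1$ and its derivatives decay like $|y|^{-d-\alpha}$, justifies differentiation under the integral and gives $u_\lambda\in C^\infty(\bR^d\setminus\{0\})$. Strict positivity is immediate from $p_1>0$ on $\bR^d$.

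For part (2), I would establish the sharper statement $u_\lambda(x)/u_0(x)\to 1$ as $x\to 0$ for every $\lambda\geq 0$; applying this to both $\lambda$ and $\lambda+\mu$ and taking the quotient yields \eqref{EQAXUL}. The starting point is
\[
u_0(x)-u_\lambda(x)=\int_0^\infty \bigl(1-e^{-\lambda t}\bigr)p_t(x)dt,
\]
which I split at an arbitrary $\delta>0$. The contribution from $(0,\delta]$ is bounded by $\lambda\delta\cdot u_0(x)$ via $1-e^{-\lambda t}\leq \lambda t\leq \lambda\delta$, while the tail from $[\delta,\infty)$ is bounded by a finite constant $C_\delta$ (finite because $\alpha<d$ and $p_t(x)\leq p_t(0)=t^{-d/\alpha}p_1(0)$ force $\int_\delta^\infty p_t(0)dt<\infty$). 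Dividing by $u_0(x)$, which blows up like $|x|^{\alpha-d}$ by \eqref{EQ2UXP}, gives $\limsup_{x\to 0}|u_0-u_\lambda|(x)/u_0(x)\leq \lambda\delta$, and letting $\delta\downarrow 0$ closes the argument. Continuity of the extended function $w_{\lambda,\mu}$ away from $0$ follows from part (1), and at $0$ from \eqref{EQAXUL}.

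For part (3), the strategy is a direct Fubini computation using the semigroup property of $p_t$:
\[
p_t*u_\lambda(x)=\int_0^\infty e^{-\lambda s}p_{t+s}(x)ds=e^{\lambda t}u_\lambda(x)-e^{\lambda t}\int_0^t e^{-\lambda u}p_u(x)du,
\]
so
\[
\frac{p_t*u_\lambda(x)-u_\lambda(x)}{t}=\frac{e^{\lambda t}-1}{t}u_\lambda(x)-\frac{e^{\lambda t}}{t}\int_0^t e^{-\lambda u}p_u(x)du.
\]
The first term tends to $\lambda u_\lambda(x)$ uniformly on any compact $K\subset\bR^d\setminus\{0\}$, since part (1) makes $u_\lambda$ continuous and hence bounded on $K$. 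For the second, I would reuse the small-$t$ bound $p_u(x)\lesssim u\cdot|x|^{-d-\alpha}$ derived in part (1); this is uniform in $x\in K$ thanks to $r_0:=\inf_{x\in K}|x|>0$, and yields that the second term is $O(t\cdot r_0^{-d-\alpha})$ uniformly on $K$. The only delicate point in the whole lemma is sustaining uniformity on compacts in part (3), and this is handled precisely by the positive lower bound $r_0$; the rest is routine manipulation of the integral representation.
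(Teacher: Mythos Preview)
Your argument is correct in all three parts, and for part~(3) it coincides with the paper's proof (the paper writes the identity as
\[
\frac{1}{t}\bigl(p_t\ast u_\lambda-u_\lambda\bigr)=\frac{e^{\lambda t}-1}{t}\int_t^\infty e^{-\lambda s}p_s(x)\,ds-\frac{1}{t}\int_0^t e^{-\lambda s}p_s(x)\,ds,
\]
which is algebraically the same as yours after adding and subtracting $\int_0^t$).

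For parts~(1) and~(2) your route differs from the paper's. The paper performs the substitution $\mathfrak{t}=|x|/t^{1/\alpha}$ once and for all, obtaining
\[
|x|^{d-\alpha}u_\lambda(x)=\alpha\int_0^\infty \exp\Bigl\{-\lambda\,|x|^\alpha/\mathfrak{t}^\alpha\Bigr\}\,\mathfrak{t}^{d-\alpha-1}r_1(\mathfrak{t})\,d\mathfrak{t},
\]
where $r_1$ is the radial profile of $p_1$. Since the integrand depends on $x$ only through the smooth factor $\exp\{-\lambda|x|^\alpha/\mathfrak{t}^\alpha\}$, smoothness of $u_\lambda$ on $\bR^d\setminus\{0\}$ follows by dominated convergence without invoking any decay of the \emph{derivatives} of $p_1$; and~\eqref{EQAXUL} drops out of the same formula by a single application of DCT to the ratio. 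Your approach instead dominates $\partial^\beta p_t(x)$ directly and, for part~(2), splits $u_0-u_\lambda=\int_0^\infty(1-e^{-\lambda t})p_t\,dt$ at a free parameter $\delta$, exploiting $u_0(x)\to\infty$. Both are equally elementary; the paper's substitution is slightly cleaner in that it never needs the (true but unstated here) fact that $\partial^\beta p_1$ decays at infinity, while your splitting argument for~(2) is arguably more robust since it only uses $p_t(x)\leq p_t(0)\lesssim t^{-d/\alpha}$ and the explicit blow-up~\eqref{EQ2UXP}. Your positivity argument ($p_1>0$ from the two-sided bound~\eqref{EQAPCR}) is simpler than the paper's contradiction argument.
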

\begin{proof}
For the first assertion, it suffices to consider $\lambda>0$. We first prove $u_\lambda(x)>0$ for $x\neq 0$. Argue by contradiction and suppose that $u_\lambda(x_0)=0$ for some $x_0\neq 0$. It follows from \eqref{EQAPTX} that
\begin{equation}\label{EQBULX}
u_\lambda(x)=\int_0^\infty \mathrm{e}^{-\lambda t} t^{-\frac{d}{\alpha}} p_1\left(\frac{x}{t^{1/\alpha}}\right)dt,\quad x\neq 0.
\end{equation} 
Then from \eqref{EQBULX} and the smoothness of $p_1$, we obtain $p_1(x_0/t^{1/\alpha})=0$ for all $t>0$. 
Note that $p_1$ is a radius function by the isotropy of $\alpha$-stable process, i.e. there is a function $r_1$ on $[0,\infty)$ such that $p_1(x)=r_1(|x|)$. This implies $p_1(y)=0$ for all $y\neq 0$, which leads to a contradiction with $\int_{\bR^d}p_1(y)dy=1$. Now we turn to prove $u_\lambda\in C^\infty(\bR^d\setminus \{0\})$. Substituting $\mathfrak{t}:=|x|/t^{1/\alpha}$ in \eqref{EQBULX}, we obtain
\begin{equation}\label{EQAULXA}
\begin{aligned}
	u_\lambda(x)&=\frac{\alpha}{|x|^{d-\alpha}}\int_0^\infty \exp\left\{-\lambda\frac{|x|^\alpha}{\ft^\alpha}\right\}\ft^{d-\alpha-1}p_1\left(\frac{x}{|x|}\ft\right)d\ft.
\end{aligned}\end{equation} 
Since \eqref{EQAPCR} and $p_1\left(\frac{x}{|x|}\ft\right)=r_1(\ft)$ is independent of $x$, one can easily conclude that $|x|^{d-\alpha}u_\lambda(x)$ is smooth at $x\neq 0$. Hence $u_\lambda \in C^\infty(\bR^d\setminus \{0\})$. 

To prove \eqref{EQAXUL}, note that
\[
	\exp\left\{-\lambda\frac{|x|^\alpha}{\ft^\alpha}\right\}\ft^{d-\alpha-1}p_1\left(\frac{x}{|x|}\ft\right)\leq \ft^{d-\alpha-1}r_1\left(\ft\right)
\]
is integrable in $\ft$ on $(0,\infty)$ due to \eqref{EQAPCR} and $\alpha<d$. Then \eqref{EQAULXA} and the dominated convergence theorem yield
\[
\begin{aligned}
	\lim_{x\rightarrow 0} \frac{u_{\lambda+\mu}(x)}{u_\lambda(x)}&= \frac{\lim_{x\rightarrow 0}\int_0^\infty \exp\left\{-(\lambda+\mu)\frac{|x|^\alpha}{\ft^\alpha}\right\}\ft^{d-\alpha-1}r_1\left(\ft\right)d\ft}{\lim_{x\rightarrow 0}\int_0^\infty \exp\left\{-\lambda\frac{|x|^\alpha}{\ft^\alpha}\right\}\ft^{d-\alpha-1}r_1\left(\ft\right)d\ft} =1. 
\end{aligned}\]

Finally, a straightforward computation yields
\[
\frac{1}{t}\left(p_t\ast u_\lambda -u_\lambda\right)=\frac{\mathrm{e}^{\lambda t}-1}{t}\int_t^\infty \mathrm{e}^{-\lambda s} s^{-\frac{d}{\alpha}} p_1\left(\frac{x}{s^{1/\alpha}}\right)ds-\frac{1}{t}\int_0^t \mathrm{e}^{-\lambda s} s^{-\frac{d}{\alpha}} p_1\left(\frac{x}{s^{1/\alpha}}\right)ds. 
\]
Then we can obtain the last assertion by virtue of \eqref{EQAPCR} and the dominated convergence theorem. 
\end{proof}

\section{Mosco convergence with changing speed measures}\label{SECB}

To make the paper more self-contained, we summarize in this appendix some basic conceptions and results concerning Mosco convergence from \cite{bib3}. It is working on a sequence of Hilbert spaces $\{H_n:n\geq 1\}$ converging to another one $H$ in the following sense.

\begin{definition}\label{DEFB1}
A sequence of Hilbert spaces $ \{H_n\}$ is called to converge to another Hilbert space $H$, if there exists a dense subspace $C\subset H$ and a sequence of operators 
$$\Phi_n :C\longrightarrow H_n$$
with the following property:
\begin{equation}\label{EQBNPN}
\lim_{n\rightarrow\infty}\|\Phi_n f\|_{H_n}=\|f\|_H
\end{equation}
for every $f\in C$.
\end{definition}
\begin{remark}\label{RMB2}
In \S\ref{SEC6}, we always take $H_n=L^2(\bR^d,\fm_{\gamma_n}), H=L^2(\bR^d,\fm_0)$ with $\gamma_n\downarrow 0$ and $C=H$ (or $C_c^\infty(\bR^d)$), $\Phi_n:=\text{id}$, i.e. $\Phi_n f:=f$ for all $f\in H$. Note that $H\subset H_n$ since $\psi_{\gamma_n}\leq \psi_0$, and \eqref{EQBNPN} holds due to the monotone convergence theorem. 
\end{remark}

Set $\cH:=\{H_n,H:n\geq 1\}$. The following definition presents elementary convergences in the context of these Hilbert spaces. 

\begin{definition}\label{DEFB3}
\begin{itemize}
\item[(1)] (Strong convergence in $\cH$) We say that $f_n$ converges to $f$ strongly in $\cH$ (as $n\rightarrow \infty$), if  $f_n\in H_n$, $f\in H$ and there exists a sequence $\{\tilde{f}_m\}\subset C$ with the following properties:
\[
	\|\tilde{f}_m-f\|_H\rightarrow 0,\quad \lim_m\limsup_n \|\Phi_n\tilde{f}_m-f_n\|_{H_n}=0.
\]
\item[(2)] (Weak convergence in $\cH$) We say that $f_n$ converges to $f$ weakly in $\cH$ (as $n\rightarrow \infty$), if $f_n\in H_n$, $f\in H$ and
$$(f_n, g_n)_{H_n}\rightarrow (f, g)_H$$
for every sequence $\{g_n\}$ converging to $g$ strongly in $\cH$.
\item[(3)] (Convergence of operators) Given a sequence of bounded linear operators $B_n$ on $H_n$, we say $B_n$ strongly converges to a bounded linear operator $B$ on $H$ (as $n\rightarrow \infty$), if for every sequence $f_n$ converging to $f$ strongly in $\cH$, $B_nf_n$ converges to $Bf$ strongly in $\cH$.  
\end{itemize}
\end{definition}

%These convergences in $\cH$ are also called those relative to $\{H_n,H:n\geq 1\}$ if there is a risk of ambiguity. 
The lemma below states some important results concerning these convergences.

\begin{lemma}[See \cite{bib3}]\label{LMB4}
Let $H_n$ and $H$ be in Definition~\ref{DEFB1}. Then the following hold:
\begin{itemize}
\item[(1)] For every $f\in H$, there exists a sequence $\{f_n: n\geq 1\}$ such that $f_n\rightarrow f$ strongly in $\cH$.
\item[(2)] If $f_n\rightarrow f$ strongly in $\cH$, then $\|f_n\|_{H_n}\rightarrow \|f\|_{H}$.
\item[(3)] If $f_n\rightarrow f$ weakly in $\cH$, then 
$$\sup_n\|f_n\|_{H_n}< \infty, \quad \|f\|_H\leq \liminf_n \|f_n\|_{H_n}.$$
\item[(4)] Let $H_n:=L^2(\bR^d,\varphi_n(x)^2dx)$ and $H=L^2(\bR^d,\varphi(x)^2dx)$ such that $\varphi_n, \varphi$ are positive in $L^2_{\mathrm{loc}}(\bR^d)$ and $H_n$ converges to $H$ in the sense of Definition~\ref{DEFB1}. Assume that $\int_K \left(\varphi_n(x)-\varphi(x)\right)^2 dx\rightarrow 0$ for any compact set $K\subset \bR^d$. Then $f_n$ converges to $f$ strongly (resp. weakly) in $\cH$, if and only if $f_n\varphi_n$ converges to $f\varphi$ strongly  (resp. weakly) in $L^2(\bR^d)$. 
\end{itemize}
\end{lemma}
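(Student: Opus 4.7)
The plan is to dispatch items (1)--(3) via the standard Kuwae--Shioya machinery for converging Hilbert spaces \cite{bib3}, and then to obtain (4) by a direct computation using the isometry $f\mapsto f\varphi_n$ between $H_n$ and $L^2(\bR^d)$.

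For (1), by density of $C$ in $H$ pick $\tilde f_m\in C$ with $\|\tilde f_m-f\|_H\to 0$, and set $f_n:=\Phi_n\tilde f_{m(n)}$ for an integer sequence $m(n)\uparrow\infty$ that grows slowly enough that $\lim_n\|\Phi_n\tilde f_m-\Phi_n\tilde f_{m(n)}\|_{H_n}=0$ for each fixed $m$; this is possible by \eqref{EQBNPN} (applied to $\tilde f_m-\tilde f_{m(n)}\in C$) and the linearity of $\Phi_n$ in \cite{bib3}. Then the two clauses of Definition~\ref{DEFB3}(1) are satisfied. For (2), I would use the triangle inequality
\begin{equation*}
\bigl|\|f_n\|_{H_n}-\|f\|_H\bigr|\leq \|f_n-\Phi_n\tilde f_m\|_{H_n}+\bigl|\|\Phi_n\tilde f_m\|_{H_n}-\|\tilde f_m\|_H\bigr|+\|\tilde f_m-f\|_H
\end{equation*}
together with \eqref{EQBNPN} and Definition~\ref{DEFB3}(1), letting $n\to\infty$ and then $m\to\infty$. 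For (3), the uniform bound $\sup_n\|f_n\|_{H_n}<\infty$ comes from applying (1) to produce $g_n\to g$ strongly for every $g\in H$ and invoking Banach--Steinhaus on the family of bounded linear functionals $(f_n,\cdot\,)_{H_n}\circ\Phi_n$. The lower-semicontinuity of the norm follows by choosing $g_n$ strongly convergent to $f/\|f\|_H$ and applying Cauchy--Schwarz to $(f_n,g_n)_{H_n}\to (f,f/\|f\|_H)_H=\|f\|_H$.

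The substantive content is (4). In the setting where the comparison maps are natural inclusions (cf.\ Remark~\ref{RMB2}, with $C=C_c^\infty(\bR^d)$ and $\Phi_n=\mathrm{id}$), the strong-convergence implication from $\cH$ to $L^2(\bR^d)$ proceeds via the isometric identity $\|h\varphi_n\|_{L^2}=\|h\|_{H_n}$ and the triangle inequality
\begin{equation*}
\|f_n\varphi_n-f\varphi\|_{L^2}\leq \|f_n-\tilde f_m\|_{H_n}+\|\tilde f_m(\varphi_n-\varphi)\|_{L^2}+\|\tilde f_m-f\|_H,
\end{equation*}
with $\tilde f_m\in C_c^\infty(\bR^d)$ the strong-approximating sequence. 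The middle term is dominated by $\|\tilde f_m\|_\infty\cdot\|(\varphi_n-\varphi)\mathbf{1}_{\mathrm{supp}\,\tilde f_m}\|_{L^2}\to 0$ by the local-$L^2$ hypothesis, so sending $n\to\infty$ then $m\to\infty$ kills all three terms. The reverse implication (from $L^2$ to $\cH$) is symmetric, applying the same three-term decomposition to $\|f_n-\tilde f_m\|_{H_n}=\|(f_n-\tilde f_m)\varphi_n\|_{L^2}$. The weak-convergence equivalence then follows from the strong case via the identity $(f_n,g_n)_{H_n}=(f_n\varphi_n,g_n\varphi_n)_{L^2}$: a strongly convergent test sequence in $\cH$ corresponds exactly to a strongly $L^2$-convergent sequence of weighted functions, so weak convergences match on both sides. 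In the direction from weak $L^2$ back to weak $\cH$, the uniform bound from (3) is used in conjunction with test functions of the form $h/\varphi_n$ with $h\in C_c(\bR^d)$, noting that $\|h/\varphi_n\|_{H_n}=\|h\|_{L^2}$ and $h/\varphi_n\to h/\varphi$ strongly in $\cH$ by the strong case just established.

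The main obstacle will be the middle term $\|\tilde f_m(\varphi_n-\varphi)\|_{L^2}$ in (4): the decomposition only works because the approximants $\tilde f_m$ can be chosen bounded with compact support, ensuring that the \emph{local} $L^2$-convergence of $\varphi_n$ is actually strong enough to drive the cross-term to zero. Once $C=C_c^\infty(\bR^d)$ is fixed as in Remark~\ref{RMB2}, the remainder of (4) reduces to careful triangle-inequality bookkeeping and the symmetry between the two implications, with the weak case handled by duality against the isometric image of $C_c(\bR^d)$.
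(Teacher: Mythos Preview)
The paper does not prove this lemma at all; it simply records the statement and defers to \cite{bib3} (Kuwae--Shioya). So there is no paper-internal argument to compare against, and your proposal is in effect a reconstruction of what \cite{bib3} contains.

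Your treatment of (1), (2) and especially (4) is correct. In (4) the three-term splitting
\[
\|f_n\varphi_n-f\varphi\|_{L^2}\leq \|f_n-\tilde f_m\|_{H_n}+\|\tilde f_m(\varphi_n-\varphi)\|_{L^2}+\|\tilde f_m-f\|_H
\]
with $\tilde f_m\in C_c^\infty(\bR^d)$ is exactly the right mechanism, and the passage to the weak case via the isometry $(f_n,g_n)_{H_n}=(f_n\varphi_n,g_n\varphi_n)_{L^2}$ together with the already-established strong equivalence is clean and complete in both directions.

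There is, however, a genuine gap in your sketch for the uniform bound in (3). Banach--Steinhaus applied to the functionals $\ell_n:u\mapsto (f_n,\Phi_n u)_{H_n}$ on $C$ (or its completion $H$) yields a uniform bound on the \emph{operator norms} $\|\ell_n\|$, but $\|\ell_n\|$ is not $\|f_n\|_{H_n}$: one only has $\|\ell_n\|\leq \|f_n\|_{H_n}\cdot\sup_{\|u\|_H=1}\|\Phi_n u\|_{H_n}$, and there is no reverse inequality in the abstract setting of Definition~\ref{DEFB1} (the maps $\Phi_n$ need not be bounded, let alone surjective, so one cannot recover $\|f_n\|_{H_n}$ by duality). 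The argument in \cite{bib3} proceeds instead by contradiction: assuming $\|f_{n_k}\|_{H_{n_k}}\to\infty$ along a subsequence, one manufactures a specific strongly convergent sequence $g_n$ for which $(f_n,g_n)_{H_n}$ fails to converge. Since the paper outsources (1)--(3) entirely to \cite{bib3} this does not affect anything downstream, but your sketch of (3) as written does not close.
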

\begin{remark}\label{RMB5}
When $\varphi_n:=\psi_{\gamma_n}$ and $\varphi:=\psi_0$ as in Remark~\ref{RMB2}, it is straightforward to verify that all conditions in the fourth assertion are satisfied. Meanwhile, this claim indicates that strong convergence in $\cH$ leads to weak convergence in $\cH$. 
\end{remark}

Now we turn to consider the so-called \emph{Mosco convergence} of closed forms. Identify a quadratic form $(\mathscr{E},\cD(\sE))$ on $H$ (or $H_n$) with the function 
\[
\mathscr{E}(\cdot): H\rightarrow \bar{\bR}:=\bR\cup\{\infty\},\quad  f\mapsto 
\begin{cases}
\mathscr{E} (f,f), & f\in\mathcal{D}(\mathscr{E})\\ \infty  & f\notin\mathcal{D}(\mathscr{E}).
\end{cases}
\]
The following conception is our main concern. 

\begin{definition}\label{DEFB6}
Let $(\sE^n,\cD(\sE^n))$ be a closed form on $H_n$ and $(\sE,\cD(\sE))$ be a closed form on $H$. We say $\sE^n$ converges to $\sE$ in the sense of Mosco, if the following conditions hold:
\begin{itemize}
\item[(M1)] If $f_n$ converges to $f$ weakly in $\cH$, then
$$\mathscr{E}(f)\leq\liminf_n\mathscr{E}^n(f_n).$$
\item[(M2)]For every $f\in H$, there exists a sequence $\{f_n:n\geq 1\}$ converging to $f$ strongly in $\cH$ such that 
$$\mathscr{E}(f)=\lim_n\mathscr{E}^n(f_n).$$
\end{itemize}
\end{definition}

The significance of Mosco convergence is indicated in the following well-known result.

\begin{theorem}[See \cite{bib24} and also \cite{bib3}]\label{THMB}
Let $\{\mathscr{E}^n: H_n\rightarrow\bar{\mathbb{R}}\}$ be a sequence of closed forms and $\mathscr{E}$ be a closed form on $H$. Let $(T^n_t)_{t\geq 0}$ and $(G^n_\lambda)_{\lambda>0}$ (resp. $(T_t)_{t\geq 0}$ and $(G_\lambda)_{\lambda>0}$) be the semigroup and resolvent of $\sE^n$ (resp. $\sE$) respectively. Then the following are all equivalent:
\begin{itemize}
\item[(1)] $\mathscr{E}^n$ converges to $\mathscr{E}$ in the sense of Mosco;
\item[(2)] $G^n_{\lambda}$ strongly converges to $G_\lambda$ for every $\lambda>0$;
\item[(3)] $T^n_{t}$ strongly converges to $T_t$ for every $t>0$.
\end{itemize}
\end{theorem}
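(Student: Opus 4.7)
The plan is to establish the triangle (1) $\Rightarrow$ (2) $\Rightarrow$ (3) $\Rightarrow$ (1), exploiting the variational characterization of resolvents as minimizers of associated quadratic functionals. Throughout, the key technical tool is Lemma~\ref{LMB4}, which lets us handle the changing speed measures built into the family $\cH$.

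For (1) $\Rightarrow$ (2), fix $\lambda>0$ and a sequence $f_n$ converging to $f$ strongly in $\cH$. The resolvent $G^n_\lambda f_n$ is the unique minimizer in $H_n$ of $J^n(u):=\sE^n(u,u)+\lambda\|u\|^2_{H_n}-2(f_n,u)_{H_n}$. Testing with $u=0$ shows $\sE^n(G^n_\lambda f_n, G^n_\lambda f_n)$ and $\|G^n_\lambda f_n\|_{H_n}$ are both bounded uniformly in $n$; hence by Lemma~\ref{LMB4}~(3) a subsequence converges weakly in $\cH$ to some $g\in H$. Apply (M2) to obtain a recovery sequence $h_n\to G_\lambda f$ strongly in $\cH$ with $\sE^n(h_n,h_n)\to \sE(G_\lambda f, G_\lambda f)$, and use (M1) to sandwich
\[
\sE(g,g)+\lambda\|g\|^2_H-2(f,g)_H \le \liminf_n J^n(G^n_\lambda f_n)\le \liminf_n J^n(h_n)=\sE(G_\lambda f, G_\lambda f)+\lambda\|G_\lambda f\|^2_H-2(f,G_\lambda f)_H.
\]
Strict convexity of the target functional forces $g=G_\lambda f$, so the whole sequence converges weakly to $G_\lambda f$. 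Upgrading weak to strong convergence then follows by showing $\|G^n_\lambda f_n\|_{H_n}\to \|G_\lambda f\|_H$, which is extracted from the sandwich inequality applied with $f_n$ in place of $h_n$ to obtain $\limsup \sE^n_\lambda(G^n_\lambda f_n)\le \sE_\lambda(G_\lambda f,G_\lambda f)$, combined with Lemma~\ref{LMB4}~(3).

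For (2) $\Leftrightarrow$ (3), use the Hille--Yosida exponential formula $T_t=\mathrm{s\text{-}lim}_{k\to\infty}(k/t\cdot G_{k/t})^k$. Since $\|\lambda G^n_\lambda\|_{H_n\to H_n}\le 1$ and $\|T^n_t\|_{H_n\to H_n}\le 1$, the convergence of resolvents propagates through iterated compositions to give strong convergence of semigroups, and conversely the identity $G_\lambda f=\int_0^\infty e^{-\lambda t}T_tf\,dt$ plus dominated convergence recovers the resolvent direction; the changing spaces cause no essential trouble because the contractivity estimates are uniform in $n$.

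Finally, for (2) $\Rightarrow$ (1), exploit the spectral identity $\sE(u,u)=\sup_{\lambda>0}\lambda\bigl(\|u\|_H^2-(\lambda G_\lambda u, u)_H\bigr)$ (and the analogous identity for $\sE^n$). If $f_n\to f$ weakly in $\cH$, then for each fixed $\lambda$ the joint strong--weak convergence gives $\lambda(\|f_n\|^2_{H_n}-(\lambda G^n_\lambda f_n,f_n)_{H_n})\to \lambda(\|f\|^2_H-(\lambda G_\lambda f,f)_H)$, whose value is bounded above by $\liminf_n\sE^n(f_n,f_n)$; letting $\lambda\uparrow\infty$ yields (M1). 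For (M2), given $f\in\cD(\sE)$, approximate by the Yosida regularization $f_m:=mG_mf$, which is $\sE_1$-convergent to $f$; then for each $m$ pick $\xi^n\to f$ strongly in $\cH$ via Lemma~\ref{LMB4}~(1) and set $f^n_m:=mG^n_m\xi^n$, which converges strongly to $f_m$ by (2). A standard diagonal selection produces the required recovery sequence.

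The main obstacle is the strong-convergence upgrade in step (1) $\Rightarrow$ (2) and the diagonal construction in (2) $\Rightarrow$ (1), both of which require careful bookkeeping across the varying reference spaces $H_n$; the semigroup step (2) $\Leftrightarrow$ (3), by contrast, is routine once uniform contractivity is in hand.
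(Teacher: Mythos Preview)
The paper does not supply its own proof of this theorem: it is quoted as a known result with the attribution ``See \cite{bib24} and also \cite{bib3}'', i.e.\ Mosco's original work and its extension to varying Hilbert spaces by Kuwae--Shioya. Your outline is precisely the standard route taken in those references, so in that sense there is nothing to compare.

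That said, your sketch contains one genuine slip. In the step (2) $\Rightarrow$ (M1) you write that under weak convergence $f_n\to f$ in $\cH$ one has
\[
\lambda\bigl(\|f_n\|_{H_n}^2-(\lambda G^n_\lambda f_n,f_n)_{H_n}\bigr)\longrightarrow \lambda\bigl(\|f\|_H^2-(\lambda G_\lambda f,f)_H\bigr).
\]
This is not justified: weak convergence gives only $\|f\|_H\le\liminf_n\|f_n\|_{H_n}$ (Lemma~\ref{LMB4}~(3)), not convergence of norms, and the hypothesis ``$G^n_\lambda\to G_\lambda$ strongly'' in Definition~\ref{DEFB3}~(3) presupposes \emph{strongly} convergent inputs, so $G^n_\lambda f_n$ need not converge either. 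The standard fix is to dualize: for any $h\in H$ pick $h_n\to h$ strongly in $\cH$ and use
\[
\sE^n_\lambda(f_n,f_n)\ge 2(f_n,h_n)_{H_n}-(G^n_\lambda h_n,h_n)_{H_n},
\]
whose right side does converge (weak--strong and strong--strong pairings). Taking $\liminf$, then supremum over $h$, yields $\sE_\lambda(f,f)\le\liminf_n\sE^n_\lambda(f_n,f_n)$; finally subtract $\lambda\|f_n\|_{H_n}^2$, use $\sup_n\|f_n\|_{H_n}<\infty$ from Lemma~\ref{LMB4}~(3), and let $\lambda\downarrow 0$.

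A minor point: in (1) $\Rightarrow$ (2) you invoke Lemma~\ref{LMB4}~(3) to extract a weakly convergent subsequence from a bounded one. That lemma states the converse direction; the weak sequential compactness you need is a separate (true) fact in the Kuwae--Shioya framework, not recorded among the items of Lemma~\ref{LMB4} in this paper.
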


\bibliographystyle{abbrv}
\bibliography{DFPM2}

\end{document}